\numberwithin{equation}{section}
\newtheorem{theorem}{Theorem}
\newtheorem{lemma}[theorem]{Lemma}
\newtheorem{remark}[theorem]{Remark}
\newcommand{\nnnorm}[2]{{\left\vert\kern-0.25ex\left\vert\kern-0.25ex\left\vert 
#2 
    \right\vert\kern-0.25ex\right\vert\kern-0.25ex\right\vert_{#1}}}
\newcommand{\nnorm}[2]{{\left\vert\kern-0.25ex\left\vert\kern-0.25ex\left\vert 
#2 
    \right\vert\kern-0.25ex\right\vert\kern-0.25ex\right\vert^{2}_{#1}}}
\newcommand{\aabs}[2]{\ensuremath{|#2|_{#1}}}
\newcommand{\norm}[2]{\ensuremath{\|#2\|_{#1}}}
\newcommand{\abs}[1]{\ensuremath{|#1|}}
\newcommand{\innernosym}[2]{\ensuremath{\big\langle #1,#2 \big\rangle}}
\newcommand{\inner}[2]{\ensuremath{\langle #1,#2 \rangle}_{H}}
\newcommand{\duals}[4]{\ensuremath { \tensor[_{#3}]{\langle#1,#2\rangle 
}{_{#4}}  }}
\newcommand{\dual}[2]{\ensuremath { \tensor[_{V^*}]{ \langle#1,#2\rangle }{_V}  
  }}
\newcommand{\tdual}[2]{\ensuremath { \tensor[_{V}]{ \langle#1,#2\rangle
}{_{V^*}} }}
\newcommand{\bR}{{\mathbb R}}
\newcommand{\bP}{{\mathbb P}}
\newcommand{\dd}{{\mathrm{d}}}
\newcommand{\cY}{\mathcal{Y}}
\newcommand{\cX}{\mathcal{X}}
\newcommand{\cL}{\mathcal{L}}
\newcommand{\cT}{\mathcal{T}}
\newcommand{\AM}{\ensuremath A_{\max}}
\newcommand{\Am}{\ensuremath A_{\min}}
\newcommand{\sC}{\mathscr{C}}
\newcommand{\sL}{\mathscr{L}}
\newcommand{\sB}{\mathscr{B}}
\newcommand{\sV}{\mathscr{V}}
\newcommand{\sF}{\mathscr{F}}
\newcommand{\sW}{\mathscr{W}}
\title[Numerical solution of the heat equation]{Numerical solution of parabolic 
problems based on a weak space-time formulation}
\author[S.~Larsson]{Stig Larsson}
\address{
  Department of Mathematical Sciences,
  Chalmers University of Technology and University of Gothenburg, 
  SE--412 96 Gothenburg,
  Sweden}
\email{stig@chalmers.se}
\author[M.~Molteni]{Matteo Molteni}
\address{
  Department of Mathematical Sciences,
  Chalmers University of Technology and University of Gothenburg,
  SE--412 96 Gothenburg, 
  Sweden}
\email{molteni@chalmers.se}
\keywords{inf-sup, space-time, superconvergence, quasi-optimality, finite 
element, error estimate, Petrov--Galerkin}
\subjclass[2010]{65M15, 65M60}
\begin{document}

\begin{abstract}
We investigate a weak space-time formulation of the 
heat equation and its use for the construction of a numerical 
scheme.
The formulation is based on a known weak space-time formulation, with
the difference that a pointwise component of the solution, which in
other works is usually neglected, is now kept. We investigate the role
of such a component by first using it to obtain a pointwise bound on
the solution and then deploying it to construct a numerical scheme.
The scheme obtained, besides being quasi-optimal in the $L^2$ sense, is also
pointwise superconvergent in the temporal nodes. We prove \emph{a
  priori} error estimates and we present numerical experiments to
empirically support our findings.
\end{abstract}

\date{\today}

\maketitle
  
\section{Introduction}\label{sec:intro}
In this article we study a numerical scheme to 
solve linear parabolic problems, based on a weak space-time 
formulation. 
The equation we consider, in its strong form and under assumptions that we 
specify in Section~\ref{sec:abstract_problem}, is
\begin{equation}\label{eq:HeatStrong}
\begin{aligned}
&\dot{u}(t) + Au(t) = f(t),\quad t\in(0,T],\\
&u(0) = u_0.
\end{aligned}
\end{equation}

During the last decades several authors have dealt with the space-time 
formulation of this problem. The main idea of a space-time formulation is 
to integrate the 
equation \eqref{eq:HeatStrong} in both the spatial and the temporal 
dimensions after multiplying it by a suitable space and time dependent test 
function. By doing the same with the initial 
condition, and by adding up the equations, we achieve the first space-time 
formulation of the problem, also called the primal 
formulation in other articles.

By means of a formal integration by parts of the 
term containing the time derivative, we achieve the 
weak space-time 
formulation of the problem, sometimes also called the second 
formulation (see \cite{Andreev1}, \cite{Andreev2}, 
\cite{Mollet}, \cite{SchwabSuli}, 
\cite{CheginiStev}) or natural formulation (see \cite{Fra}).
For both formulations, the main tool to prove the existence and 
uniqueness of the solution is the Banach--Ne{\v{c}}as--Babu{\v{s}}ka 
theorem, see Theorem~\ref{thm:bnbthm_abstract} below.

Although such a theory was originally used to deal with 
mixed formulations of elliptic problems, from the late eighties it has also 
been used in connection 
with parabolic problems. A first analysis of numerics for evolution 
problems based on space-time 
formulations can be found in 
\cite{BabuskaJanik,BabuskaJanik2}.

In \cite{SchwabSte}, a discretization 
of evolution equations based on the primal formulation of the problem is 
discussed. The problem is restated as a bi-infinite matrix problem and 
discretized by an adaptive wavelet method. The proof of 
well-posedness 
of the abstract problem presented in the appendix of this article is of great 
relevance, since many other articles explicitly 
refer to it. In \cite{SchwabSuli} the second 
space-time formulation is deployed to construct adaptive numerical schemes; 
this choice allows the authors to apply the theory presented in previous paper 
to parabolic PDE's in infinite dimensions, where the solution is in 
general not regular enough to allow the use of the first space-time formulation

In \cite{CheginiStev}, the second 
space-time formulation is used to further investigate what was 
studied in \cite{SchwabSte}, under the extra assumption that the bi-infinite 
matrix system is truly sparse.

In \cite{AndreevThesis,Andreev1} the stability of space-time Petrov--Galerkin 
discretizations of the problem is studied for both the first and the 
second formulations. A possible selection of stable space-time trial and test 
spaces is presented, and a CFL condition is derived. Such a condition is shown 
to be necessary when trial and 
test spaces are chosen to be piecewise polynomials.
In \cite{Andreev2} the author proposes a Petrov--Galerkin space-time 
discretization of the heat equation on an unbounded time interval by 
means of Laguerre polynomials. Both the first and the second space-time 
formulations are investigated.

In \cite{Mollet} the author considers suitable hierarchical 
families of discrete spaces, both of finite element and 
wavelet type, and investigates the required number of 
extra layers in order to guarantee uniform boundedness of the discrete inf-sup 
constant in the second space-time formulation. 

In \cite{UrbanPatera2}, the second 
space-time formulation is used as a natural framework in which the reduced 
basis method can be investigated, allowing the authors to derive 
sharp a posteriori error bounds.  

However, in all the works on the second space-time formulation, the
authors choose to neglect a term that naturally arises from the
integration by parts. This is achieved by using test functions which
vanish at the final time instant.  Although this is justified because
the neglected term is a pointwise version of the term which is kept,
the neglected term can play an important role, as noticed, for
example, in \cite{LarssonMolteni}, where the second space-time
formulation is used to study a stochastic variant of
\eqref{eq:HeatStrong}.

By keeping such a term in the current paper, not only do we have a
framwork for stochastic evolution equations, but we also obtain
estimates in the $L^{\infty}((0,T);H)$-norm in addition to the natural
$L^{2}((0,T);V)$-norm and we can construct a numerical scheme that is
superconvergent at the temporal mesh points.

The paper is structured as follows.  In Section~\ref{sec:abstract_problem} we 
present the abstract framework for the weak space-time formulation based on the 
Banach--Nec\v as--Babu\v ska ``inf-sup'' theorem.  
Section~\ref{sec:discretization} introduces the Petrov--Galerkin approximation 
based on piecewise polynomials in space and time.  The trial functions are 
discontinuous of degree $q\ge0$ in time while the test functions are continuous 
of degree $q+1$.  The possibility of extracting point values at the temporal 
nodes is emphasized.  Section~\ref{sec:error} is devoted to the \emph{a priori} error 
estimates based on quasi-optimality.  A CFL condition is required.  The temporal 
order in the natural norm is $q+1$.  However, we  note that the piecewise 
constant approximation ($q=0$) is of second order in time by a comparison with the 
Crank--Nicolson method.  In Section~\ref{sec:semidiscrete} we give a direct 
proof of this by showing that our method is actually superconvergent of order 
$2(q+1)$ at the temporal nodes.  The proof is based on separating the temporal 
and spatial error and a duality argument.  We only present the analysis of the 
temporally semidiscrete part.  The proof avoids the use of a CFL condition, 
which is not available for pure time discretizations.  The temporal convergence 
rates are demonstrated in numerical experiments in Section~\ref{sec:numerics}. 


\section{The abstract problem}\label{sec:abstract_problem}
\subsection{An abstract framework}
We assume that a Gelfand triple $V \hookrightarrow H \hookrightarrow V^*$ is 
given, where $V$ and $H$ are separable Hilbert spaces such that $V$ is densely 
embedded into $H$. We assume that the operator $A$, which appears in 
\eqref{eq:HeatStrong}, is associated to a symmetric bilinear form 
$a(\cdot,\cdot)$ that 
satisfies the following conditions:
\begin{alignat}{3}
\tag{boundedness} & \abs{a(u,v)} \leq \AM \norm{V}{u}
\norm{V}{v},\quad
&& \, u,v\in V, \\
\tag{coercivity}  & a(v,v) \geq \Am \norm{V}{v}^2, \quad && \, v\in V,
\end{alignat}
for some positive constants $\AM$ and $\Am$. We introduce the Lebesgue-Bochner 
spaces
\begin{align*}
{\cY^t = L^2((0,t);V),} \quad {\cX^t = L^2((0,t);V) \cap 
H^1((0,t);V^*),}
\end{align*}
with norms defined by
\begin{equation*}
\begin{aligned}
\norm{\cY^t}{y}^2 &:= \norm{L^2((0,t);V)}{y}^2 = \int_0^{t}\norm{V}{y(s)}^2\,\dd
s,\\
\norm{\cX^t}{x}^2 &:= \norm{H}{x(0)}^2 + \norm{L^2((0,t);V)}{x}^2 + 
\norm{L^2((0,t);V^*)}{\dot{x}}^2.
\end{aligned}
\end{equation*}
We use the notation $ \cY^t_H$ for the space $\cY^t \times H$ endowed
with the product norm, and we use the convention that $\cY = \cY^T$,
$\cY_H = \cY \times H$, and $\cX=\cX^T$, when $t=T$. We recall that
the space $\cX^t$ is densely embedded in $ \sC([0,t];H)$, So that
pointwise values of $x\in\cX$ make sense. With the present choice of
norm the embedding constant is $1$; in particular, it does not depend
on $t$ or $V$, see \cite{LarssonMolteni}.

The first space-time formulation of 
\eqref{eq:HeatStrong} reads:
\begin{align}\label{eq:first_space_time}
u\in \cX: \sB(u,y) = \sF(y), \quad \forall y=(y_1,y_2)\in \cY_H.
\end{align}
Here we use the bilinear form:
\begin{equation*}
\begin{aligned}
&\sB \colon \cX \times \cY_H \rightarrow \bR,\\
&\sB(x,y) := \int_0^T{\dual{\dot{x} +A x}{y_1}}\,\dd s + 
\inner{x(0)}{y_2},
\end{aligned}
\end{equation*}
and the load functional
\begin{align*}
\sF \in \cY_H^*, \quad \sF(y):= \int_0^T{ \tdual{f}{y_1} }\,\dd t 
+\inner{u_0}{y_2}. 
\end{align*}
If we integrate by parts and swap the test and trial spaces, then we obtain 
the weak (or second) space-time formulation
\begin{equation} \label{eq:secondspacetime}
u=(u_1,u_2) \in \cY_H: \sB^*(u,x) = \sF(x), \quad \forall x 
\in \cX,
\end{equation}
where the bilinear form and the load functional are now:
\begin{equation}\label{eq:bilinear_form}
\begin{aligned}
&\sB^* \colon \cY_H \times \cX \rightarrow \bR,\\
&\sB^*(y,x) := \int_0^T{ \tdual{y_1}{-\dot{x} +Ax} }\,\dd s + 
\inner{y_2}{x(T)},
\end{aligned}
\end{equation}
\begin{equation}\label{eq:load_functional}
\begin{aligned}
&\sF \in \cX^*, \quad \sF(x):= \int_0^T{ \tdual{f}{x} }\,\dd t 
+\inner{u_0}{x(0)}.
\end{aligned}
\end{equation}
It is easy to see that the second component $u_2$ of the solution $u$ 
to 
\eqref{eq:secondspacetime} depends on the final time instant $T$. We 
can think of parametrizing \eqref{eq:secondspacetime} 
over $t\in[0,T]$ and reformulate it as a family of 
problems:
\begin{equation}\label{eq:abstract_problem_param_t}
u=(u_1,u_2(t)) \in \cY^t_H: \sB_{t}^*(u,x) = \sF_{t}(x), \quad 
\forall x 
\in \cX^t,
\end{equation}
where $\sF_{t}$ and $\sB_t^*$ are as before, but restricted to the
spaces $\cY^t_H$ and $\cX^t$.

If the right-hand side of \eqref{eq:HeatStrong} is regular enough, as
in \S~\ref{subsubsub:regular_RHS} below, then $u_1$ has a square
integrable weak derivative and therefore belongs to the space
$\cX\subset \sC([0,T];H)$, and $u_1=u_2$.
However, if the right-hand side is less regular, as in
\S~\ref{subsubsub:stoch_RHS} and \S~\ref{subsubsub:nowhere},  then
$u_1$ need not be differentiable nor continuous, but  $u_2$ is
a continuous time-dependent $H$-valued version of $u_1$:
\begin{equation*}
\int_0^T \norm{H}{u_1(t)-u_2(t)}^2\,\dd t = 0.
\end{equation*}
The second component $u_2$ is often omitted in other works 
(e.g., \cite{SchwabSuli}, \cite{Mollet}), where the following weak space-time 
formulation is used:
\begin{equation*}
u \in \cY: \sB^*(u,x) = \sF(x), \quad \forall x \in
\cX_{0,\{T\}}:=\{x\in\cX:x(T)=0\}.
\end{equation*}
We keep $u_2$ in order to be able to extract point values.  

In order to appreciate the weak space-time formulation, we briefly 
recall the two main advantages that we want to exploit: larger variety of 
source terms and the possibility to obtain pointwise bounds.  

\subsection{A larger variety of right-hand sides}
First of all, the weak-space time formulation allows the use of a broad family 
of possible source terms.
\subsubsection{Regular right-hand side}\label{subsubsub:regular_RHS}
The basic case that we analyse is given by
\begin{equation}\label{eq:regular_rhs}
\sF_t (x)= \int_0^t{ \dual{f(s)}{x(s)} }\,\dd s + \inner{u_0}{x(0)}, \quad 
t\in[0,T],
\end{equation}
for some $f \in L^2((0,T);V^*)$ and $u_0 \in H$. In this case, we have $u_2 = 
u_1 \in \cX$.
Indeed, by taking $x \in \sC_0^{\infty}([0,t];V)$ in 
\eqref{eq:abstract_problem_param_t}, we obtain
\begin{equation*}
\int_0^t{ \inner{u_1(s)}{-\dot{x} (s) }}\,\dd s = \int_0^t{ \dual{f(s) - 
Au_1(s)}{x(s)} }\,\dd s.
\end{equation*}
Thus, $u_1$ has a weak derivative $\dot{u}_1 = f-Au_1 \in L^2((0,T);V^*)$, so 
that 
$u_1 \in \cX$. Then we can integrate by parts in 
\eqref{eq:abstract_problem_param_t} and conclude that $u_2 = u_1$ and that 
they both belong to $\cX \subset \sC([0,T];H)$.

\subsubsection{Piecewise regular right-hand side}\label{subsubsub:pw}
A more general case is offered by
\begin{equation*}
\sF_t(x) = \int_0^t{ \dual{f(s)}{x(s)} }\,\dd s + \sum_{t_i \leq 
t}\inner{\zeta_i}{x(t_i)}, \quad t\in[0,T],
\end{equation*}
for some $f \in L^2((0,T);V^*)$, $ \{\zeta_i\}_{i=1,\ldots,M} \in H $ and $ 
\{t_i\}_{i=1,\ldots,M} \subset [0,T] $.

In this case the conclusions presented above only hold piecewise. 
In particular, the values of $\zeta_i$ represent the 
jumps of the solution at time $t_i$.

\subsubsection{Stochastic integral}\label{subsubsub:stoch_RHS}
A more general example is represented by a functional which is defined
$\omega$-wise, for $\omega$ in a complete probability space
$(\Omega,\Sigma_t,\bP) $, and of the form $\sF_t + \sW_t$. Here
$\sF_t$ is as in \S~\ref{subsubsub:regular_RHS} and $\sW_t$ is a weak
stochastic integral with respect to an $H$-valued Wiener process $W$,
with operator-valued integrand $\Psi$:
\begin{equation}\label{eq:stoch_RHS}
\sW_t(x) = \int_0^t{ \inner{\Psi(s) \, \dd W(s)}{x(s)} }, \quad t\in[0,T].
\end{equation}
The details of such an equation have been presented in 
\cite{LarssonMolteni} and we refrain from recalling them here. It holds that 
$u_1$ and $u_2$ are versions of each other, in the sense that 
$ u_1 \in L^2((0,T);V)$, $u_2 \in \sC([0,T];H)$ almost surely and $u_1 = 
u_2$ in $L^2(\Omega 
\times (0,T);H)$. This case represents an important example in which the weak 
space-time 
formulation cannot be replaced by the first space-time 
formulation, since the 
Wiener process is nowhere differentiable and therefore $u_1 \notin \cX$.

\subsubsection{Nowhere differentiable right-hand side}\label{subsubsub:nowhere}
The most general type of right-hand side that we can handle has the form 
\begin{equation}\label{eq:non_smooth_RHS}
\sF_t(x) = \int_0^t{ \tdual{g(s)}{-\dot{x}(s)} }\,\dd s - \inner{g(t)}{x(t)} + 
\inner{g(0)}{x(0)}  , \quad t\in[0,T],
\end{equation}
for a function $g \in L^2((0,T);V) \cap \sC([0,T];H)$, and with $g$ nowhere 
differentiable, so that we are not in one of the first two cases in this 
list.

Similar conclusions to the ones obtained for the stochastic integral hold even 
in this case. We have that $u_1 \notin \cX$, that $u_1 = u_2$ in 
$L^2((0,T);H)$, and that $u_2 \in \sC([0,T];H)$. In case $g$ is smooth it is 
easy to see that integration by parts leads to a right-hand side of the same 
form 
as in \eqref{eq:regular_rhs}.

We want to stress that both in the case of a right-hand side of the form 
\eqref{eq:stoch_RHS} or \eqref{eq:non_smooth_RHS} the presence of $u_2$ is 
important, since point values $u_1(t)$ of $u_1$ are not well defined.

\subsection{Point values and decompositions}
Another important advantage offered by the weak formulation is that
the solution is not required to be continuous in its first component
$u_1$. This allows us to split the time interval and to solve local
problems, where information is passed from one time interval to the
next through $u_2(t)$, see \eqref{eq:splitting} and
\S~\ref{subs:decomposition} below.  This can be exploited even on a
discrete level, by solving problems with different spatial
discretizations on each time interval, since the passage of
information between two different intervals occurs only by means of
the second component of the solution, $u_2$. This ensures a
flexibility in the choice of the spatial grid, which could in
principle change at each interval and still not cause any sort of
variational crime, since the discrete spaces would still be proper
subspaces of the continuous ones.

\subsection{The inf-sup theorem} \label{sec:inf_sup_opt}
We recall the following theorem (see \cite{BabuskaAziz, Ern}):
\begin{theorem}[Banach--Ne{\v{c}}as--Babu{\v{s}}ka 
(BNB)]\label{thm:bnbthm_abstract}
Let $V$ and $W$ be Hilbert spaces. Given a bilinear form
$\sB \colon W \times V \rightarrow \bR $, such that
\begin{align}
\tag{BDD} \label{BDD}
C_B &:=  \sup_{0 \neq w \in W}\sup_{0 \neq v \in
V}\frac{\sB(w,v)}{\norm{W}{w}\norm{V}{v}} < \infty,
\end{align}
the associated linear operator $B \colon W \rightarrow V^*$, defined by
\begin{align*}
\duals{Bw}{v}{V^*}{V} := \sB(w,v) , \ 
\forall w \in W, \forall v \in V,
\end{align*}
is boundedly invertible if and only if the following two conditions are 
satisfied: 
\begin{align}
\tag{BNB1} \label{BNB1A}
c_B &:= \inf_{0 \neq w \in W}\sup_{0 \neq v \in
V}\frac{\sB(w,v)}{\norm{W}{w}\norm{V}{v}} >0,\\
\tag{BNB2} \label{BNB2A}
\forall v \in &V, \quad \sup_{0 \neq w \in W}\sB(w,v) >0.
\end{align}
\end{theorem}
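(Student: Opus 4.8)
The plan is to analyze the linear operator $B\colon W\to V^*$ and to translate each inf-sup condition into a structural property of $B$, then assemble these into bounded invertibility. The first step is to record that, by the definition of the dual norm and of $B$,
\begin{equation*}
\norm{V^*}{Bw} = \sup_{0\neq v\in V}\frac{\duals{Bw}{v}{V^*}{V}}{\norm{V}{v}} = \sup_{0\neq v\in V}\frac{\sB(w,v)}{\norm{V}{v}},
\end{equation*}
so that \eqref{BNB1A} is exactly the statement that $\norm{V^*}{Bw}\geq c_B\norm{W}{w}$ for all $w\in W$, with $c_B>0$. This lower bound is the workhorse of the whole argument, and the boundedness hypothesis \eqref{BDD} guarantees that $B$ itself is bounded with $\norm{}{B}=C_B$.

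From this bound I would first extract injectivity of $B$ (if $Bw=0$ then $w=0$) together with the fact that $B$ has closed range: if $Bw_n\to g$ in $V^*$, then $(w_n)$ is Cauchy in $W$, since $\norm{W}{w_n-w_m}\leq c_B^{-1}\norm{V^*}{Bw_n-Bw_m}$, and passing to the limit $w_n\to w$ and using continuity of $B$ gives $Bw=g$, so $g$ lies in the range. The same bound shows that the inverse of $B$, defined on its range, is bounded by $c_B^{-1}$.

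The role of \eqref{BNB2A} is to upgrade closedness of the range to surjectivity. I would reformulate \eqref{BNB2A} as the assertion that the only $v\in V$ satisfying $\sB(w,v)=0$ for every $w\in W$ is $v=0$; since $\sB(w,v)=\duals{Bw}{v}{V^*}{V}$, this says precisely that the annihilator of the range of $B$, viewed in $V$ under the reflexive identification $V^{**}\cong V$, is trivial. By the Hahn--Banach theorem, a subspace of $V^*$ is dense if and only if its annihilator vanishes, so \eqref{BNB2A} is equivalent to density of the range of $B$. Combining this with the closedness established above yields that $B$ maps onto $V^*$, and together with injectivity and the bound $c_B^{-1}$ on the inverse we conclude that $B$ is boundedly invertible. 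For the converse, bounded invertibility gives $\norm{V^*}{Bw}\geq\|B^{-1}\|^{-1}\norm{W}{w}$, which is \eqref{BNB1A}, while surjectivity forces the range to be all of $V^*$, hence dense, which is \eqref{BNB2A}.

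The step I expect to require the most care is the passage from \eqref{BNB2A} to density of the range: one must correctly invoke reflexivity of the Hilbert spaces to identify the annihilator living a priori in the bidual with a subset of $V$, and apply the Hahn--Banach separation argument with the pairing conventions consistent with the definition of $B$. The remaining implications are routine consequences of the single lower bound $\norm{V^*}{Bw}\geq c_B\norm{W}{w}$ and the elementary fact that a closed, densely defined range is the whole space.
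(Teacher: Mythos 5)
The paper itself offers no proof of this theorem: it is recalled as a known result with a citation to the literature (Babu\v{s}ka--Aziz and Ern--Guermond), so there is nothing internal to compare your argument against. Your proof is correct and is, in essence, the standard argument found in those references: \eqref{BNB1A} is exactly the lower bound $\norm{V^*}{Bw}\geq c_B\norm{W}{w}$, which gives injectivity, closedness of the range (using completeness of $W$), and the bound $c_B^{-1}$ on the inverse restricted to the range; \eqref{BNB2A}, read as ``$\sB(w,v)=0$ for all $w$ forces $v=0$,'' identifies the annihilator of the range under the reflexive identification $V^{**}\cong V$ and hence, by Hahn--Banach, is equivalent to density of the range; closed plus dense gives surjectivity, and the converse direction is immediate from $\norm{V^*}{Bw}\geq\norm{}{B^{-1}}^{-1}\norm{W}{w}$ and surjectivity. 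Two small remarks. First, the theorem as stated in the paper writes ``$\forall v\in V$'' in \eqref{BNB2A}, which taken literally fails at $v=0$; your reformulation silently (and correctly) restricts to $v\neq0$, which is the intended reading. Second, your argument in fact yields the sharper quantitative statement used later in the paper, namely $c_B^{-1}=\norm{\sL(V^*,W)}{B^{-1}}$: the inverse is bounded by $c_B^{-1}$, and bounded invertibility gives \eqref{BNB1A} with constant $\norm{}{B^{-1}}^{-1}$, so the two quantities coincide; it is worth stating this explicitly since the paper relies on it in \eqref{eq:equivalent_to_bnb1-2}.
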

The constant $c_B$ is called the {\it inf-sup constant}, while the constant   
$C_B$ is called the {\it boundedness constant}. Since $c_B^{-1} = 
\norm{\sL(V^*,W)}{B^{-1}} = \norm{\sL(W^*,V)}{(B^*)^{-1}} $, it follows 
that \eqref{BNB1A}--\eqref{BNB2A} are equivalent to
\begin{align}\label{eq:equivalent_to_bnb1-2}
\inf_{0 \neq w \in W}\sup_{0 \neq v \in
V}\frac{\sB(w,v)}{\norm{W}{w}\norm{V}{v}} = \inf_{0 \neq v \in V}\sup_{0 \neq w
\in W}\frac{\sB(w,v)}{\norm{W}{w}\norm{V}{v}}>0.
\end{align}
This allows to swap the spaces where the infimum and the supremum are 
taken. 

We now have to show that $\sB_t^*$ in \eqref{eq:bilinear_form} satisfies 
the assumptions of the BNB theorem on 
the spaces $\cY^t_H$ and $\cX^t$. The proof follows the same line as the one 
presented \cite{SchwabSte}; we omit the proof of the \eqref{BNB2A} since it 
does not contain any quantitative information. In order to obtain sharper 
bounds for $C_B$ and $c_B$, we introduce equivalent norms. This is of 
particular relevance in this new formulation, since we 
want to have a constant $1$ in front of the pointwise term $u_2$, in order to 
exploit the temporal decomposition, which we present in the next section.

In virtue of the properties of $A$, fractional powers are well defined
and the norms of $V$ and $V^*$ are equivalent to
$\norm{H}{A^{\frac12} \cdot}$ and $\norm{H}{A^{-\frac12} \cdot}$, respectively. For
a more detailed explanation of this fact we refer to \cite{Cioica}. We
therefore introduce equivalent norms on $\cX^t$ and $\cY^t_H$,
respectively, as follows:
\begin{equation*}
\begin{aligned}
\aabs{\cX^t}{x}^2 &:= \norm{H}{x(0)}^2 +  \int_0^t{ \Big(\norm{H}{ 
A^{\frac12} x(s)}^2 + 
\norm{H}{A^{-\frac12} \dot{x}(s)}^2\Big)}\,\dd s,\\
\aabs{\cY^t_H}{y}^2 &:= \norm{H}{y_2}^2 + \int_0^t{ \norm{H}{ A^{\frac12}
y_1(s)}^2}\,\dd s.
\end{aligned}
\end{equation*}

\begin{lemma}\label{lemma:equiv_norm}
The norm $\nnnorm{\cX^t}{\cdot}$, defined by
\begin{align*}
\nnorm{\cX^t}{x}:= \norm{H}{x(t)}^2 + 
\int_0^{t} \norm{H}{A^{\frac12} x(s) -A^{-\frac12}\dot{x}(s)}^2\,\dd s ,
\end{align*}
is equal to the norm $\aabs{\cX^t}{\cdot}$, for every $t\in[0,T]$. 
\end{lemma}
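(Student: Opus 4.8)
The plan is to expand the square in the triple norm so that two of the resulting terms match the integrand of $\aabs{\cX^t}{\cdot}^2$ exactly, and to show that the remaining cross term integrates to a boundary contribution that cancels the discrepancy. Expanding pointwise,
\begin{equation*}
\norm{H}{A^{\frac12}x(s)-A^{-\frac12}\dot{x}(s)}^2 = \norm{H}{A^{\frac12}x(s)}^2 + \norm{H}{A^{-\frac12}\dot{x}(s)}^2 - 2\inner{A^{\frac12}x(s)}{A^{-\frac12}\dot{x}(s)},
\end{equation*}
and the first two terms on the right are precisely the integrand appearing in $\aabs{\cX^t}{x}^2$. Thus the entire claim reduces to controlling the cross term.

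First I would rewrite the cross term as a duality pairing. Both $A^{\frac12}x(s)$ and $A^{-\frac12}\dot{x}(s)$ lie in $H$, and under the isometric identifications of $V$ and $V^*$ with $H$ furnished by the fractional powers of the self-adjoint operator $A$, one has $\inner{A^{\frac12}x(s)}{A^{-\frac12}\dot{x}(s)} = \dual{\dot{x}(s)}{x(s)}$. The key analytical input is then the integration-by-parts identity for the Gelfand triple: for $x \in \cX^t = L^2((0,t);V)\cap H^1((0,t);V^*)$, the scalar function $s \mapsto \norm{H}{x(s)}^2$ is absolutely continuous with $\frac{\dd}{\dd s}\norm{H}{x(s)}^2 = 2\,\dual{\dot{x}(s)}{x(s)}$ for almost every $s$. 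This is exactly the regularity that makes the pointwise values $x(0)$ and $x(t)$ meaningful in $\sC([0,t];H)$, as recalled above.

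With this in hand the proof closes by the fundamental theorem of calculus:
\begin{equation*}
\int_0^t 2\,\dual{\dot{x}(s)}{x(s)}\,\dd s = \int_0^t \frac{\dd}{\dd s}\norm{H}{x(s)}^2\,\dd s = \norm{H}{x(t)}^2 - \norm{H}{x(0)}^2.
\end{equation*}
Substituting back, the $\norm{H}{x(t)}^2$ terms cancel and the leftover $\norm{H}{x(0)}^2$ is exactly the boundary term in $\aabs{\cX^t}{\cdot}^2$, yielding $\nnorm{\cX^t}{x} = \aabs{\cX^t}{x}^2$ for every $t \in [0,T]$.

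I expect the only genuine obstacle to be the rigorous justification of the integration-by-parts identity, since $\dot{x}$ takes values in $V^*$ while $x$ takes values in $V$; this is the classical Lions-type lemma for Gelfand triples and may be invoked directly. Everything else is the algebraic expansion of the $H$-inner product together with the self-adjointness of $A^{\frac12}$ from the spectral calculus for $A$.
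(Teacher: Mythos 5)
Your proposal is correct and follows essentially the same route as the paper's proof: expand the square, identify the cross term $\inner{A^{\frac12}x(s)}{A^{-\frac12}\dot{x}(s)}$ with the duality pairing $\tdual{x(s)}{\dot{x}(s)} = \frac12\frac{\dd}{\dd s}\norm{H}{x(s)}^2$, and integrate so that the boundary terms exchange $\norm{H}{x(t)}^2$ for $\norm{H}{x(0)}^2$. The only difference is that you make explicit the Lions-type absolute-continuity lemma that the paper invokes implicitly, which is a reasonable bit of added rigor but not a different argument.
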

\begin{proof}
We have
\begin{align*}
\nnorm{\cX^t}{x} &= \norm{H}{x(t)}^2 + 
\int_0^{t}\Big(\norm{H}{A^{\frac12}x(s)}^2 
+\norm{H}{A^{-\frac12}\dot{x}(s)}^2 - 2\tdual{x(s)}{\dot{x}(s)}
\Big)\,\dd s\\
&= \norm{H}{x(0)}^2 + 
\int_0^{t}\Big(\norm{H}{A^{\frac12}x(s)}^2 
+\norm{H}{A^{-\frac12}\dot{x}(s)}^2 \Big)\,\dd s = \aabs{\cX^t}{x}^2,
\end{align*}
because $\inner{A^{\frac12}x(s) }{A^{-\frac12}\dot{x}(s)} = 
\tdual{x(s) }{\dot{x}(s)} = \frac12 \frac{\dd}{\dd t} \norm{H}{x(s)}^2$.
\end{proof}


We now compute $c_B$ and $C_B$ for $\sB_t^*$ with respect 
to $\aabs{\cY^t_H}{\cdot}$ and $\aabs{\cX^t}{\cdot}$.
\begin{theorem}\label{thm:bnbthm}
The bilinear form $\sB_t^*(\cdot,\cdot)$ satisfies the following:
\begin{align}
&C_B:=\sup_{0\neq y \in \cY^t \times H}\sup_{0\neq x \in \cX^t} 
\frac{\sB_t^*(y,x)}{\aabs{\cY^t_H}{y} \aabs{\cX^t}{x}  } = 1, \label{eq:bdd}\\
&c_B:=\inf_{0\neq y \in \cY^t \times H} \sup_{0\neq x \in \cX^t}
\frac{\sB_t^*(y,x)}{\aabs{\cY^t_H}{y} \aabs{\cX^t}{x}  } = 1.\label{eq:infsup}
\end{align}
\end{theorem}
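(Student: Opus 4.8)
The plan is to transport the whole problem into the pivot space $H$ by means of the fractional powers of $A$, so that $\sB_t^*$ becomes (up to the boundary term) an $L^2((0,t);H)$ inner product; both constants then follow from a single use of the Cauchy--Schwarz inequality and its equality case. The starting point is to rewrite the duality pairing. Since the $V$- and $V^*$-norms coincide with $\norm{H}{A^{\frac12}\cdot}$ and $\norm{H}{A^{-\frac12}\cdot}$, for $y_1\in V$ and $-\dot x+Ax\in V^*$ one has
\[
\tdual{y_1}{-\dot x+Ax}=\inner{A^{\frac12}y_1}{A^{\frac12}x-A^{-\frac12}\dot x},
\]
so that
\[
\sB_t^*(y,x)=\int_0^t\inner{A^{\frac12}y_1}{A^{\frac12}x-A^{-\frac12}\dot x}\,\dd s+\inner{y_2}{x(t)}.
\]

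For the boundedness \eqref{eq:bdd} I would apply Cauchy--Schwarz in $H$ pointwise, then in $L^2(0,t)$ for the integral, and finally combine the integral contribution with the boundary contribution through the elementary bound $ab+cd\le\sqrt{a^2+c^2}\,\sqrt{b^2+d^2}$. The two resulting factors are exactly $\aabs{\cY^t_H}{y}$ and $\nnnorm{\cX^t}{x}$, and by Lemma~\ref{lemma:equiv_norm} the latter equals $\aabs{\cX^t}{x}$. This gives $\sB_t^*(y,x)\le\aabs{\cY^t_H}{y}\,\aabs{\cX^t}{x}$, hence $C_B\le1$.

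For the inf-sup bound \eqref{eq:infsup} I would use that $c_B\le C_B$ always holds, so it suffices to exhibit, for each $0\neq y$, a single test function $x$ attaining the ratio $1$. Tracking the equality case of the Cauchy--Schwarz chain above suggests choosing $x$ so that $A^{\frac12}x-A^{-\frac12}\dot x=A^{\frac12}y_1$ and $x(t)=y_2$; equivalently, $x$ solves the terminal-value problem $\dot x=A(x-y_1)$ with $x(t)=y_2$. For such an $x$ every inequality in the chain becomes an equality, and a direct computation yields $\sB_t^*(y,x)=\aabs{\cY^t_H}{y}^2$ together with $\aabs{\cX^t}{x}=\aabs{\cY^t_H}{y}$, so the quotient equals $1$. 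Thus $c_B\ge1$, and combined with $C_B\le1$ we obtain $c_B=C_B=1$. As in \cite{SchwabSte}, I would omit the verification of condition \eqref{BNB2A}, which carries no quantitative information.

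The main obstacle is the construction and regularity of this supremizer: one must show that the backward problem $\dot x=A(x-y_1)$, $x(t)=y_2$ admits a solution lying in $\cX^t=L^2((0,t);V)\cap H^1((0,t);V^*)$, i.e. that $x\in L^2((0,t);V)$ and $\dot x\in L^2((0,t);V^*)$, and that the terminal datum $y_2\in H$ is attained. Reversing time turns this into a standard coercive \emph{forward} evolution equation, which is cleanest to treat via the spectral decomposition of $A$ (or semigroup theory): it reduces to the scalar ODEs $\dot x_k=\lambda_k(x_k-y_{1,k})$, and the uniform lower bound $\lambda_k\ge\lambda_{\min}>0$ coming from coercivity delivers the required square-integrability in $V$ and $V^*$ after summation. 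Verifying these membership and summability statements is the only genuinely technical point; the equality computation itself is immediate once $x$ is known to be admissible.
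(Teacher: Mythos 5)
Your boundedness half is essentially the paper's own argument (rewrite the duality pairing with fractional powers of $A$, apply Cauchy--Schwarz, and identify the resulting factor $\nnnorm{\cX^t}{x}$ with $\aabs{\cX^t}{x}$ via Lemma~\ref{lemma:equiv_norm}), but your inf-sup half takes a genuinely different route. The paper never constructs a supremizer $x$ for a given $y$: it invokes the swap identity \eqref{eq:equivalent_to_bnb1-2} and, for each \emph{test} function $x\in\cX^t$, exhibits the explicit dual element $y_x=\big(x-A^{-1}\dot x,\,x(t)\big)\in\cY^t_H$, for which a one-line computation gives $\sB_t^*(y_x,x)=\nnorm{\cX^t}{x}=\aabs{\cY^t_H}{y_x}\aabs{\cX^t}{x}$. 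The merit of that ordering is that $y_x$ is produced from $x$ by the \emph{bounded} map $A^{-1}\colon V^*\to V$, so no differential equation is ever solved and the argument is purely algebraic. You instead keep the quantifiers in their stated order and build, for each $y$, a supremizer as the solution of the backward problem $\dot x=A(x-y_1)$, $x(t)=y_2$; your equality computation is indeed immediate (both $\sB_t^*(y,x)$ and $\aabs{\cY^t_H}{y}\aabs{\cX^t}{x}$ equal $\aabs{\cY^t_H}{y}^2$, again by Lemma~\ref{lemma:equiv_norm}), but the admissibility $x\in\cX^t$ is a full-fledged parabolic existence theorem (Lions' theorem, after time reversal). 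What your route buys is that \eqref{BNB1A} is proved directly in its stated form, with no appeal to the swap identity; what it costs is dependence on exactly the kind of well-posedness result that the inf-sup machinery of this paper is designed to deliver (compare \S~\ref{subsubsub:regular_RHS}, where $u_1\in\cX$ is \emph{deduced} from the weak formulation, not assumed). This is not logically circular if you cite the classical theory as a black box, but it makes your proof heavier and less self-contained than the paper's. Two points to repair if you carry out the sketch: the paper's hypotheses do not give $A$ a compact resolvent, so the ``scalar ODEs $\dot x_k=\lambda_k(x_k-y_{1,k})$'' must be phrased through the spectral measure of the self-adjoint operator $A$ (or replaced by a Galerkin argument) rather than a discrete eigenbasis; and you should note that the constructed $x$ is nonzero whenever $y\neq0$ (which follows from $\aabs{\cX^t}{x}=\aabs{\cY^t_H}{y}>0$), so the ratio is well defined. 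Omitting \eqref{BNB2A} is consistent with the paper's own choice.
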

\begin{proof}
We first notice that
\begin{align*}
\abs{\sB_t^*(y,x)} &\leq \int_0^t{ \abs{ 
\tdual{y_1(s)}{-\dot{x}(s) + A x(s)} }   } \,\dd s + 
\abs{\inner{y_2}{x(t)}}\\
&\leq \int_0^t{ \norm{H}{A^{\frac12}y_1(s)}\,
\norm{H}{ -A^{-\frac12}\dot{x}(s) + A^{\frac12}{x}(s)}  }\,\dd s 
+ 
\norm{H}{y_2}\norm{H}{x(t)}\\
&\leq \aabs{\cY^t_H}{y} \nnnorm{\cX^t}{x} = \aabs{\cY^t_H}{y} \aabs{\cX^t}{x} .
\end{align*}
This proves $C_B\leq 1$. To show $c_B\ge1$, we use the second 
variant in \eqref{eq:equivalent_to_bnb1-2} and prove
\begin{equation*}
\forall x \in \cX^t, \ \exists y_x \in \cY^t_H: \sB_t^*(y_x,x) \geq 
\aabs{\cY^t_H}{y_x}\nnnorm{\cX^t}{x} = \aabs{\cY^t_H}{y_x}\aabs{\cX^t}{x}.
\end{equation*}
For $x \in \cX^t$ we choose $y_x = \big( x - A^{-1}\dot{x}, x(t) \big)$, which 
belongs to $\cY^t_H$, since
\begin{align}\label{eq:clever_estimate_BNB}
\aabs{\cY^t_H}{y_x}^2 &= \norm{H}{ 
x(t)}^2+  \norm{L^2((0,t);H)}{A^{\frac12}(x - 
A^{-1}\dot{x})}^2  = \nnorm{\cX^t}{x} = \aabs{\cX^t}{x}^2.
\end{align}
By expanding the bilinear form and
using\eqref{eq:clever_estimate_BNB}, we have
\begin{align*}
\sB_t^*(y_x,x) &= \int_0^t \inner{ x(s) - A^{-1}\dot{x}(s)}{-\dot{x}(s)+ 
Ax(s)} \,\dd s + \norm{H}{x(t)}^2\\
&= \int_0^t \norm{H}{ A^{\frac12} x(s) - 
A^{-\frac12} \dot{x}(s)}^2 \,\dd s + \norm{H}{x(t)}^2
= \nnorm{\cX^t}{x} = \aabs{\cX^t}{x}\aabs{\cY^t_H}{y_x}.
\end{align*}
Hence, $c_B\ge1$. Since $c_B \leq C_B $, we conclude that they are both equal to $1$.
\end{proof}

As a consequence, since the bilinear form fulfils the hypothesis 
of the the BNB theorem, the operator $B_t \in 
\cL(\cY^t_{H},(\cX^t)^*)$ associated with $
\sB_t^*(\cdot,\cdot)$ via $$ \sB_t^*(y,x) = \duals{B_t 
y}{x}{(\cX^t)^*}{\cX^t}$$ is boundedly
invertible, and $ \aabs{\cY^t_H}{y} \leq \norm{(\cX^t, 
\aabs{\cX^t}{\cdot})^*}{\sF}$.
We note that for a right-hand side of the form \S~\ref{subsubsub:regular_RHS},
$\sF_t$ belongs to the dual space of $(\cX^t,\aabs{\cX^t}{\cdot})$ for any 
$t\leq T$, if 
$f\in 
L^2((0,T);V^*)$ and $u_0 \in H$. In fact,
\begin{equation}\label{eq:norm_rhs}
 \begin{aligned}
\norm{(\cX^t,\aabs{\cX^t}{\cdot})*}{\sF_t} &\leq \Big[ \int_0^t{ 
\norm{H}{A^{-\frac12}f(s)}^2 
}\,\dd 
s + 
\norm{H}{u_0}^2 \Big]^{\frac12} \\
&\leq \Big[ \Am^{-1} \int_0^t{ 
\norm{V^*}{f(s)}^2 
}\,\dd 
s + 
\norm{H}{u_0}^2 \Big]^{\frac12}.
\end{aligned}
\end{equation}
By combining the BNB theorem with \eqref{eq:norm_rhs}, we thus achieve 
the estimate
\begin{align*}
\int_0^t{ \norm{H}{A^{\frac12} u_1(s)}^2 }\,\dd s + \norm{H}{u_2(t)}^2  \leq 
\int_0^t{ \norm{H}{A^{-\frac12} f(s)}^2 }\,\dd s + \norm{H}{u_0}^2.
\end{align*}
In particular, by using the 
equivalence between $ \aabs{\cY^t_H}{\cdot}$ and $ 
\norm{\cY^t_H}{\cdot}$, and the last bound in \eqref{eq:norm_rhs},
we obtain that:
\begin{align}\label{eq:bound_norm_sol_V}
\Am \int_0^t{ \norm{V}{u_1(s)}^2 }\,\dd s + \norm{H}{u_2(t)}^2  
\leq  
\Am^{-1}\int_0^t{ \norm{V^*}{f(s)}^2 }\,\dd s + \norm{H}{u_0}^2.
\end{align}

We emphasize that we have a constant $1$ in front of $u_2$. Therefore, we 
can split and 
recompose the problem as we please, and the bounds for the norms will compose 
accordingly, without accumulation of constants. More precisely, if we consider 
the 
same problem on $[0,r]$ with initial data $u_0 \in H$, and on $[r,t]$ 
with initial data given by the $u_2(r) \in H$ previously obtained, then we 
have the two local bounds:
\begin{align}  \label{eq:splitting}
  \begin{aligned}
\Am \int_{0}^{r}\norm{V}{u_1(s)}^2\,\dd s + \norm{H}{u_2(r)}^2 &\leq    
\Am^{-1}\int_{0}^{r}\norm{V^*}{f(s)}^2\,\dd s + \norm{H}{u_0}^2 ,\\
\Am\int_{r}^{t}\norm{V}{u_1(s)}^2\,\dd s + \norm{H}{u_2(t)}^2 &\leq  
\Am^{-1}\int_{r}^{t}\norm{V^*}{f(s)}^2\,\dd s + \norm{H}{u_2(r)}^2 ,
  \end{aligned}
\end{align}
which sum up to the global bound \eqref{eq:bound_norm_sol_V}. 
We have thus a local inf-sup theory consistent with the global one, which can 
be exploited to derive local estimates which can be put together to build 
global estimates.

We summarize this in the following theorem.
\begin{theorem}[Existence and uniqueness] \label{thm:maintheorem2}
For a right-hand side of the form \S~\ref{subsubsub:regular_RHS}, with $u_0 \in 
H$ 
and $f \in L^2((0,T);V^*)$, there exists a unique solution $ u =(u_1,u_2)$ 
in $L^2((0,T);V)
\times \sC([0,T];H)$ to Problem~\eqref{eq:abstract_problem_param_t}. Its norm 
satisfies the following bound:
\begin{align*}
&\Am \int_0^T{\norm{V}{u_1(t)}^2}\,\dd t +
\sup_{t\in[0,T]}\norm{H}{u_2(t)}^2  \leq 
 \Am^{-1} \int_0^T{\norm{V^*}{f(t)}^2}\,\dd t +
\norm{H}{u_0}^2,
\end{align*}
and in particular it holds that $u_1 = u_2 \in \cX$.
\end{theorem}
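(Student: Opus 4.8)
The plan is to obtain the conclusion from the slice-wise inf-sup estimate already established in Theorem~\ref{thm:bnbthm}, and then to upgrade the abstract solution to a genuinely continuous $H$-valued trajectory. First I would fix $t\in(0,T]$ and apply the BNB theorem (Theorem~\ref{thm:bnbthm_abstract}) to $\sB_t^*$ on $\cY^t_H\times\cX^t$: Theorem~\ref{thm:bnbthm} supplies the constants $c_B=C_B=1$ for the weighted norms $\aabs{\cY^t_H}{\cdot}$ and $\aabs{\cX^t}{\cdot}$, and condition \eqref{BNB2A} holds, so the associated operator $B_t$ is boundedly invertible. Since $f\in L^2((0,T);V^*)$ and $u_0\in H$ ensure, via \eqref{eq:norm_rhs}, that $\sF_t$ lies in the dual of $(\cX^t,\aabs{\cX^t}{\cdot})$, there is for each $t$ a unique pair $(u_1,u_2(t))\in\cY^t_H$ solving \eqref{eq:abstract_problem_param_t}.

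Next I would check that these slice-wise solutions are consistent in $t$, so that the first components agree on nested intervals and define a single $u_1\in L^2((0,T);V)$ together with a map $t\mapsto u_2(t)\in H$; this is precisely the local/global compatibility recorded in \eqref{eq:splitting}, which follows from uniqueness. The bounded invertibility gives the stability estimate $\aabs{\cY^t_H}{(u_1,u_2(t))}\le\norm{(\cX^t)^*}{\sF_t}$, and feeding in the load bound \eqref{eq:norm_rhs} and passing from the weighted to the original norms produces the slice-wise estimate \eqref{eq:bound_norm_sol_V} for every $t\in[0,T]$. Since its right-hand side is nondecreasing in $t$, taking the supremum over $t$ on both sides at once bounds $\Am\int_0^T\norm{V}{u_1}^2\,\dd t$ together with $\sup_{t}\norm{H}{u_2(t)}^2$ by the right-hand side evaluated at $t=T$, which is the asserted estimate.

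Finally I would prove the regularity claim $u_1=u_2\in\cX$, which I expect to be the crux, the remainder being a repackaging of Theorem~\ref{thm:bnbthm}. Following \S~\ref{subsubsub:regular_RHS}, testing \eqref{eq:abstract_problem_param_t} against $x\in\sC_0^{\infty}([0,t];V)$ shows that $u_1$ has the weak time derivative $\dot u_1=f-Au_1\in L^2((0,T);V^*)$, whence $u_1\in\cX$ and, by the dense embedding $\cX\hookrightarrow\sC([0,T];H)$, admits a continuous $H$-valued representative. Integrating by parts in \eqref{eq:abstract_problem_param_t} then identifies $u_2(t)$ with this representative $u_1(t)$ for every $t$, so $t\mapsto u_2(t)$ is continuous and the supremum above is attained. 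The delicate points are justifying the integration by parts within the Gelfand triple, so that the boundary term $\inner{u_2}{x(t)}$ is exactly $\inner{u_1(t)}{x(t)}$, and confirming that the constant-$1$ slice-wise bound survives the passage to the supremum; both become routine once $u_1\in\cX$ is secured.
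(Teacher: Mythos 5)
Your overall route --- slice-wise application of Theorem~\ref{thm:bnbthm} with $c_B=C_B=1$, the load bound \eqref{eq:norm_rhs}, compatibility of the slices via uniqueness, and the identification $u_1=u_2\in\cX$ by the argument of \S~\ref{subsubsub:regular_RHS} --- is exactly the paper's own (the paper gives no separate proof: the theorem is presented as a summary of \eqref{eq:bound_norm_sol_V} and \eqref{eq:splitting}). The existence, uniqueness and regularity parts of your write-up are correct and match the paper.

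The gap is the passage to the supremum. What the slice-wise estimate gives is, for every $t\in[0,T]$,
\begin{equation*}
\Am\int_0^t\norm{V}{u_1(s)}^2\,\dd s+\norm{H}{u_2(t)}^2\ \le\ \Am^{-1}\int_0^T\norm{V^*}{f(s)}^2\,\dd s+\norm{H}{u_0}^2,
\end{equation*}
so taking $\sup_t$ bounds the supremum of the \emph{sum}, in which the integral is truncated at the same $t$ at which the pointwise term is evaluated. The theorem's left-hand side, $\Am\int_0^T\norm{V}{u_1(s)}^2\,\dd s+\sup_t\norm{H}{u_2(t)}^2$, is instead the sum of the two suprema taken separately, which dominates the supremum of the sum; your sentence ``taking the supremum over $t$ on both sides at once bounds $\Am\int_0^T\norm{V}{u_1}^2\,\dd t$ together with $\sup_t\norm{H}{u_2(t)}^2$'' silently replaces the weaker quantity by the stronger one, and that implication is false. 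In fact the target inequality itself fails: take $f=0$ and $u_0\neq0$, so that $u_1(t)=u_2(t)=e^{-tA}u_0$; then $\sup_{t\in[0,T]}\norm{H}{u_2(t)}^2=\norm{H}{u_0}^2$ (the supremum sits at $t=0$), while $\Am\int_0^T\norm{V}{u_1(s)}^2\,\dd s>0$, so the left side strictly exceeds the right side $\norm{H}{u_0}^2$. Consequently no additional argument can close this step --- in particular the regularity $u_1\in\cX$ does not help, contrary to your closing remark that the point ``becomes routine'' once $u_1\in\cX$ is secured. What your argument actually proves is \eqref{eq:bound_norm_sol_V} for every $t$ (equivalently, a bound on $\sup_t$ of the combined slice-wise quantity), or, if one insists on decoupling the two terms, the stated estimate with an extra factor $2$ on the right-hand side; the constant-$1$ decoupled form written in the theorem is not attainable, a defect of the paper's own summary that your reconstruction has faithfully inherited.
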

\begin{remark}
In case the right-hand side is not the one introduced in 
\S~\ref{subsubsub:regular_RHS}, we still obtain existence and uniqueness as in 
Theorem~\ref{thm:maintheorem2}, but the bounds 
of the norms are modified according to the bounds that can be obtained for 
$\norm{(\cX^t, \aabs{\cX^t}{\cdot})^*}{\sF_t}$. The modifications for the cases 
presented in 
\S~\ref{subsubsub:pw} or \S~\ref{subsubsub:nowhere} are easy to derive, while 
for the 
case of \S~\ref{subsubsub:stoch_RHS} the theory required is more involved and 
we 
refer to \cite{LarssonMolteni} for the details.
\end{remark}

\subsection{Further spatial regularity}\label{subsec:spatial_reg}
In order to measure spatial regularity use the spaces
  $\dot{H}^{\gamma}=D(A^{\frac{\gamma}{2}})$ with norms
  $\norm{\dot{H}^{\gamma}}{v}=\norm{H}{A^{\frac{\gamma}{2}}v}$ for
  $\gamma\in\bR$.

\begin{theorem}[Spatial regularity] \label{maintheorem2_beta}
  Assume $\beta\ge0$. The bilinear
  form defining problem \eqref{eq:abstract_problem_param_t} is bounded
  and satisfies the inf-sup conditions on the couple of spaces
  $L^2((0,t);\dot{H}^{\beta+1}) \times \dot{H}^{\beta}$ and 
  $L^2((0,t);\dot{H}^{1-\beta})\cap H^1((0,t);\dot{H}^{-1-\beta}) $.
  In particular, for a right-hand side of the form
  \S~\ref{subsubsub:regular_RHS}, if $f \in L^2((0,T);\dot{H}^{\beta-1})$ and 
$u_0 \in \dot{H}^{\beta}$, there exists a unique solution $ u =(u_1,u_2) \in 
L^2((0,T);\dot{H}^{\beta+1}) \times
  \sC([0,T];\dot{H}^{\beta})$
  to \eqref{eq:abstract_problem_param_t}. Its norm satisfies the
  following bound:
\begin{align*}
&\int_0^T{\norm{\dot{H}^{\beta+1}}{u_1(t)}^2}\,\dd t +
\sup_{t\in[0,T]}\norm{\dot{H}^{\beta}}{u_2(t)}^2  \leq 
\int_0^T{\norm{\dot{H}^{\beta-1}}{f(t)}^2}\,\dd t +
\norm{\dot{H}^{\beta}}{u_0}^2,
\end{align*}
and it holds, in particular, that 
$u_1 = u_2 \in L^2((0,T);\dot{H}^{\beta+1})\cap H^1((0,T);\dot{H}^{\beta-1})$. 
\end{theorem}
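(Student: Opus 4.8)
The plan is to reduce the general case $\beta\ge0$ to the case $\beta=0$ already settled in Theorem~\ref{thm:bnbthm}, by means of an isometric change of variables built from fractional powers of $A$. Concretely, given $x$ in the proposed test space $L^2((0,t);\dot{H}^{1-\beta})\cap H^1((0,t);\dot{H}^{-1-\beta})$, I set $w:=A^{-\frac{\beta}{2}}x$, and given $y=(y_1,y_2)$ in the proposed trial space $L^2((0,t);\dot{H}^{\beta+1})\times\dot{H}^{\beta}$, I set $z:=\big(A^{\frac{\beta}{2}}y_1,A^{\frac{\beta}{2}}y_2\big)$. Since $A^{\frac{\gamma}{2}}$ maps $\dot{H}^{\delta}$ isometrically onto $\dot{H}^{\delta-\gamma}$ and commutes with $\partial_t$ (being time-independent), these maps are isometric isomorphisms onto the standard spaces $\cX^t$ and $\cY^t_H$: one checks $\norm{H}{A^{\frac12}w}=\norm{\dot{H}^{1-\beta}}{x}$, $\norm{H}{A^{-\frac12}\dot{w}}=\norm{\dot{H}^{-1-\beta}}{\dot{x}}$, $\norm{H}{A^{\frac12}z_1}=\norm{\dot{H}^{\beta+1}}{y_1}$ and $\norm{H}{z_2}=\norm{\dot{H}^{\beta}}{y_2}$. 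In particular $w\in\cX^t\subset\sC([0,t];H)$, so $w(t)=A^{-\frac{\beta}{2}}x(t)$ is well defined and the test functions inherit pointwise values in $\dot{H}^{-\beta}$.

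The second step is to verify that the bilinear form is invariant under this substitution, namely $\sB_t^*(y,x)=\sB_t^*(z,w)$. Using self-adjointness of the fractional powers and the fact that they commute with $A$ and with $\partial_t$, the integrand transforms as $\tdual{y_1}{-\dot{x}+Ax}=\tdual{A^{-\frac{\beta}{2}}z_1}{A^{\frac{\beta}{2}}(-\dot{w}+Aw)}=\tdual{z_1}{-\dot{w}+Aw}$, while the endpoint term becomes $\inner{y_2}{x(t)}=\inner{A^{-\frac{\beta}{2}}z_2}{A^{\frac{\beta}{2}}w(t)}=\inner{z_2}{w(t)}$. Since both the bilinear form and the two norms are preserved, the boundedness and inf-sup constants computed in Theorem~\ref{thm:bnbthm} transfer verbatim, giving $C_B=c_B=1$ on the $\beta$-spaces; the nonquantitative condition \eqref{BNB2A} likewise passes through the isomorphism. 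Hence $\sB_t^*$ satisfies the hypotheses of the BNB theorem (Theorem~\ref{thm:bnbthm_abstract}) on the stated couple of spaces.

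For existence, uniqueness and the norm bound I would then follow the $\beta=0$ argument that led to \eqref{eq:bound_norm_sol_V}. For a right-hand side of the form \S~\ref{subsubsub:regular_RHS}, I estimate the dual norm of $\sF_t$ on the test space exactly as in \eqref{eq:norm_rhs}: writing $\dual{f}{x}=\inner{A^{\frac{\beta-1}{2}}f}{A^{\frac{1-\beta}{2}}x}$ and $\inner{u_0}{x(0)}=\inner{A^{\frac{\beta}{2}}u_0}{A^{-\frac{\beta}{2}}x(0)}$ and applying Cauchy--Schwarz first in $s$ and then across the two terms yields the bound $\big[\int_0^t\norm{\dot{H}^{\beta-1}}{f(s)}^2\,\dd s+\norm{\dot{H}^{\beta}}{u_0}^2\big]^{1/2}$. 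Combining this with the inf-sup estimate $\aabs{\cY^t_H}{u}\le\norm{(\cX^t,\aabs{\cX^t}{\cdot})^*}{\sF_t}$ (now read through the isomorphism) and taking the supremum over $t\in[0,T]$ gives the asserted estimate.

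Finally, the identification $u_1=u_2$ and the extra temporal regularity $u_1\in H^1((0,T);\dot{H}^{\beta-1})$ follow by repeating the bootstrap of \S~\ref{subsubsub:regular_RHS} in the shifted scale: testing against smooth compactly supported functions shows $\dot{u}_1=f-Au_1$, and since $u_1\in L^2((0,T);\dot{H}^{\beta+1})$ we have $Au_1\in L^2((0,T);\dot{H}^{\beta-1})$, so $\dot{u}_1\in L^2((0,T);\dot{H}^{\beta-1})$ together with $f$. The step requiring the most care is the first one: one must confirm that $A^{\pm\frac{\beta}{2}}$ genuinely commute with the weak time derivative and extend to isometries on the full negative-order scale $\dot{H}^{-1-\beta}\to\dot{H}^{-1}=V^*$, so that $w$ indeed lands in $\cX^t$ and acquires continuous $H$-valued point values. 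Once this is secured, the rest is a direct transcription of the $\beta=0$ proof.
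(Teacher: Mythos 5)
The paper states Theorem~\ref{maintheorem2_beta} without giving any proof, leaving it as an immediate consequence of shifting the $\beta=0$ theory along the scale $\dot{H}^{\gamma}$; your reduction via the isometries $A^{\pm\frac{\beta}{2}}$, which transport the bilinear form, the norms, and hence the constants $C_B=c_B=1$ of Theorem~\ref{thm:bnbthm} verbatim to the shifted spaces, is exactly this intended argument and is correct. Your handling of the load functional (the analogue of \eqref{eq:norm_rhs} in the shifted scale) and the bootstrap identifying $u_1=u_2\in L^2((0,T);\dot{H}^{\beta+1})\cap H^1((0,T);\dot{H}^{\beta-1})$, mirroring \S~\ref{subsubsub:regular_RHS}, complete the statement with nothing missing.
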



\section{Discretization}\label{sec:discretization}
We start this section by introducing a discretization based on test functions 
which are piecewise linear in time and trial functions which are piecewise 
constant in time. The scheme that we obtain turns out to be a modification of 
the Crank--Nicolson scheme, namely with a first step of Euler backward and a 
final step of Euler forward.

\subsection{Discretization with polynomials of lowest degree in 
time}\label{subsec:discretization_low}
We consider a partition of the time interval $[0,T]$, given by
$\cT_k = \{ 0=t_0<\dots <t_i<t_{i+1}<\dots <t_N=T\}$, with $k_i = t_{i+1}-t_{i}$,
and $k = \max_i {k_i}$.  We denote by $\cT^n_k$ the partition
$\cT_k$ restricted to the interval $[0,t_n]$.  We denote by $I_i$ the
interval $[t_{i},t_{i+1}]$, by $S_k$ the space of continuous piecewise
linear functions with respect to $ \cT_k$, and by $Q_k$ the space of
piecewise constant functions for the same partition, with the
convention that $S_k^n$ and $Q_k^n$ refer to the partition
$\cT^n_k$. We introduce $V_h$ as a standard finite element space of
continuous piecewise polynomials of degree less or equal to $p$, over
a quasi-uniform family of triangulations of the spatial domain, with
mesh size $h$. Since temporal discretization is our main concern, we
assume that $p\geq 1$ is sufficiently large for our analysis to make
sense.

The finite-dimensional subspaces that we use are defined as
$\cY_{h,k} := Q_k \otimes 
V_h $, and $ \cX_{h,k} := S_k
\otimes V_h$; consistently with the notation introduced above we 
introduce the 
family of spaces $\cY^n_{h,k}$ and $\cX^n_{h,k}$.

We denote the standard basis of piecewise linear ``hat'' functions generating 
$S_k$ 
by $\{\phi_i\}_{i=0}^N$ and the standard basis of piecewise constant functions 
generating $Q_k$ by $\{\psi_i\}_{i=0}^{N-1}$. We denote by 
$\sB_{}^*$ and $\sF_{}$ the bilinear form and the load functional 
defined in \eqref{eq:bilinear_form}--\eqref{eq:load_functional}. If we start 
from the formulation in 
\eqref{eq:secondspacetime}, then the discretized 
problem can be written as:
\begin{equation} \label{eq:discrete_problem}
U \in {\cY_{h,k} \times V_h}\colon \sB_{}^*(U,X) = \sF_{}(X), 
\quad \forall X \in
{\cX_{h,k}}.
\end{equation}

For a formal proof of the existence and uniqueness of a solution to the 
discrete problem in \eqref{eq:discrete_problem}, we follow \cite{UrbanPatera2}, 
where the authors show that the inf-sup condition holds, and that the 
discrete 
inf-sup constant is the same as the inf-sup-constant obtained in the 
continuous case. However, in 
order to do so, the space $\cX_{h,k}$ is endowed with a different norm, 
depending on 
the discretization:
\begin{align*}
\norm{\cX_k}{X}^2&:=
\norm{H}{X(0)}^2+\sum_{i=0}^{N-1}{\int_{I_i}{\Big(\norm{V^*}{\dot{X}}^2 +
\norm{V}{\Pi_i X}^2 \Big) }\,\dd s },
\end{align*}
and similarly
\begin{align*}
\aabs{\cX_k}{X}^2&:= \norm{H}{X(0)}^2+
\sum_{i=0}^{N-1}{\int_{I_i}{\Big(\norm{H}{A^{-\frac12}\dot{X}}^2 +
\norm{H}{A^{\frac12}\Pi_i X}^2 \Big) }\,\dd s },
\end{align*}
where $\Pi$ is the orthogonal projection, defined locally by 
$\big(\Pi_i X \big)(t)=\frac{1}{k_i}\int_{I_i}{X(s)}\,\dd s$, $t\in I_i$. 

We can now repeat the argument 
of Lemma~\ref{lemma:equiv_norm} and Theorem~\ref{thm:bnbthm} in 
$(\cY_{h,k} \times V_h, \aabs{\cY_H}{\cdot})$ and $(\cX_{h,k} , 
\aabs{\cX_k}{\cdot})$, and obtain inf-sup constant $c_B=1$ and boundedness 
constant $C_B=1$ (cf.~Lemma~\ref{lemma:equiv_norm_sd} and 
Theorem~\ref{thm:bnbthm_sd} below). What remains now is to bound $\sF$ with 
respect to 
the 
modified norm $\aabs{\cX_k}{\cdot}$ instead of $\aabs{\cX}{\cdot}$. 
Comparing the two norms, we note that, for all $X 
\in \cX_{h,k}$,
\begin{align*}
\sum_{i=0}^{N-1}{\int_{I_i}{\Big(\norm{V^*}{\dot{X}}^2 +
\norm{V}{X}^2 \Big) }\,\dd s 
} \leq c_{\rm S}^2 \sum_{i=0}^{N-1}{\int_{I_i}{\Big(\norm{V^*}{\dot{X}}^2 +
\norm{V}{\Pi_i X}^2 \Big) }\,\dd s },
\end{align*}
since $\cX_{h,k}$ is finite-dimensional, and 
where $c_{\rm S}$ is in general not uniform in the choice of the spaces. This leads 
to the equivalence of norms:
\begin{align}\label{eq:equivalence_norm_cX}
 \aabs{\cX_k}{X} \leq  \aabs{\cX}{X} \leq \max(1,c_{\rm S})\aabs{\cX_k}{X}, \quad 
x\in\cX_{h,k}.
\end{align}

The discrete 
problem is therefore not stable with respect to the original norms, unless 
something more is assumed on $c_{\rm S}$. In 
\cite{AndreevThesis} it was shown that a 
sufficient condition for the uniform boundedness of $c_{\rm S}$ is:
\begin{align}\label{eq:CFL}
C_{\rm CFL} := k \sup_{v \in V_h}{\frac{ \norm{V}{v} }{ 
\norm{V^*}{v}}}  < \infty, \qquad \mbox{for all $h$ and $k$}.
\end{align}
By quasi-uniformity and an inverse inequality, this reduces to a CFL condition 
$ k \leq C h^2$. Thus \eqref{eq:CFL} ensures the stability of the discrete 
problem with respect to the original 
norms. More precisely, for a right-hand side of the 
form \S~\ref{subsubsub:regular_RHS} we have, similarly to \eqref{eq:norm_rhs}:
\begin{align}\label{eq:RHS_discrete}
\norm{(\cX_{h,k}, \aabs{\cX_k}{\cdot})^*}{\sF} \leq \Big( c_{\rm S}^2 
\Am^{-1}\norm{L^2((0,T);V^*)}{f}^2 + 
\norm{H}{u_0}^2\Big)^{\frac12}.
\end{align}
Within this setting, an analogue of Theorem~\ref{thm:maintheorem2} holds 
for  $U\in(\cY_{h,k} \times V_h,\norm{\cY_H}{\cdot})$ with the bound
modified as in \eqref{eq:RHS_discrete}. 

In order to see that \eqref{eq:discrete_problem} amounts to a time-stepping 
scheme, we introduce the following notation:
\begin{align*}
&F_i^L := \frac{2}{k_{i-1}}\int_{t_{i-1}}^{t_{i}}{f\phi_i}\,\dd s,  
&&F_i^R:=\frac{2}{k_{i}}\int_{t_{i}}^{t_{i+1}}{f\phi_i}\,\dd s, \\
&\innernosym{A_i^L u}{v} := \frac{2}{k_{i-1}}\int_{t_{i-1}}^{t_{i}}{ 
a(u,v \otimes \phi_i)}\,\dd s,   
&&\innernosym{A_i^R u}{v}:=\frac{2}{k_{i}}\int_{t_{i}}^{t_{i+1}}{a(u,v 
\otimes \phi_i)}\,\dd s.
\end{align*}
The discrete problem, on the pair of spaces $(\cY_{h,k} \times 
V_h,\cX_{h,k})$, can be written 
explicitly as follows, for any $v \in V_h$:
\begin{align*}
\innernosym{U_1^{(0)} - u_0}{v} + \frac{k_0}{2}\innernosym{A_0^R U_1^{(0)}}{v} 
&=
\frac{k_0}{2} \innernosym{F_0^{R}}{v},\\
\innernosym{ U_1^{(i)} - U_1^{(i-1)}}{v} + \frac12\innernosym{{ k_i A^R_{i} 
U_1^{(i)} + 
k_{i-1}A^L_{i}U_1^{(i-1)}}}{v} &=  
\frac12\innernosym{ { k_{i}F_{i}^R  + k_{i-1} F_i^L } }{v},\\
\innernosym{U_2^{(N)} - U_1^{(N-1)}}{v} + \frac{k_{N-1}}{2} 
\innernosym{A_N^L U_1^{(N-1)}}{v}
&= \frac{k_{N-1}}{2}\innernosym{F_N^L}{v}.
\end{align*}
Here the $U_1^{(i)} \in V_h$ denote the coefficients of $U_1 = 
\sum_{i=0}^{N-1} U^{(i)} \psi_i$, and $U_2^{(N)} \in V_h$ is the approximation 
of 
$u_2(t_N)$. The scheme is a combination of one step of backward Euler, 
several steps of Crank--Nicolson, and a final step of forward Euler. 

From the discrete counterpart to equation \eqref{eq:bound_norm_sol_V}
and from \eqref{eq:RHS_discrete}, it follows that the norm of the
numerical solution is bounded as follows:
\begin{equation}\label{eq:norm_sol_discrete}
\begin{aligned}
\Am \norm{\cY}{U_1}^2 + \norm{H}{U_2^{(N)}}^2  &\leq
c_{\rm S}^2\Am^{-1} \norm{L^2((0,T);V^*)}{f}^2 + \norm{H}{u_0}^2 .
\end{aligned}
\end{equation}

\subsection{Decomposition of the scheme}\label{subs:decomposition}
By noticing that in the case of a partition 
with a single element, the scheme reduces to
\begin{align*}
\innernosym{U_1^{(0)} - u_0}{v} + \frac{k_0}{2}\innernosym{A_0^R U_1^{(0)}}{v} 
&=
\frac{k_0}{2} \innernosym{F_0^{R}}{v},\\
\innernosym{U_2^{(1)} - U_1^{(0)}}{v} + \frac{k_0}{2}\innernosym{A_1^L 
U_1^{(0)}}{v} 
&= \frac{k_0}{2}\innernosym{F_1^L}{v}, 
\end{align*}
we can think of iterating such a decomposition over each time interval
$I_i$, thus obtaining the extra values that approximate $u_2(t_i)$ at
each grid point $t_i$.

The scheme becomes, for $i=0,\ldots,N-1$ and $U^{(0)} = u_0$:
\begin{equation}\label{eq:the_scheme}
\begin{aligned}
\innernosym{U_1^{(i)} - U_2^{(i)}}{v} + 
\frac{k_i}{2}\innernosym{A_i^R U_1^{(i)}}{v} 
&= 
\frac{k_{i}}{2}\innernosym{F_i^{R}}{v},\\
\innernosym{U_2^{(i+1)} - U_1^{(i)}}{v} + 
\frac{k_i}{2}\innernosym{A_{i+1}^L U_1^{(i)}}{v} 
&= 
\frac{k_{i}}{2}\innernosym{F_{i+1}^{L}}{v}.
\end{aligned}
\end{equation}
It follows from a suitable variant of Theorem~\ref{thm:maintheorem2} that the 
following 
holds: 
\begin{equation*}
\begin{aligned}
\Am \norm{\cY}{U_1}^2 + \max_{i=1,\ldots,N}\norm{H}{U_2^{(i)}}^2  &\leq 
c_{\rm S}^2\Am^{-1}\norm{L^2((0,T);V^*)}{f}^2 + \norm{H}{u_0}^2 .
\end{aligned}
\end{equation*}
\begin{remark}
An important thing to notice is that $U_2^{(n)}$ can be constructed from 
$U_1\big|_{(0,t_n)}$ 
even if 
one does not want to introduce the splitting proposed above. The second 
equation in \eqref{eq:the_scheme} can indeed by used at any time, as long as 
we 
have the values of $U_1$.
\end{remark}

\subsection{Temporal discretization with polynomials of higher 
degree}\label{subsec:higher_degree}
The results in this section can be generalized to polynomials of
arbitrary degree with respect to time. We denote by $S_{k,q+1}$ the
space of continuous functions that are piecewise polynomials of degree
at most $q+1$, with respect to the partition $ \cT_k$, and by
$Q_{k,q}$ the space of discontinuous functions which are piecewise
polynomials of degree at most $q$, for the same partition. We adopt
the same convention and notation as before and define the
finite-dimensional subspaces $\cY_{h,k,q} := Q_{k,q} \otimes V_h $,
and $ \cX_{h,k,q+1} := S_{k,q+1} \otimes V_h$, for some
finite-dimensional subspace $V_h \subset V$.

The discretized problem can be written in variational form as
\begin{equation} \label{eq:discrete_form_q}
U \in {\cY_{h,k,q} \times V_h}\colon \sB_{}^*(U,X) = \sF_{}(X), 
\quad \forall X \in
{\cX_{h,k,q+1}}.
\end{equation}
Results of existence and uniqueness follow from a minor modification of the 
argument used in the case $q=0$, that is, by modifying the norm on the 
space $\cX_{h,k,q+1}$ as follows:
\begin{equation}  \label{eq:discretenorms}
\begin{aligned}
\norm{\cX_{k,q+1}}{X}^2&:= 
\sum_{i=0}^{N-1}{\int_{I_i}{\Big(\norm{V^*}{\dot{X}}^2 
+
\norm{V}{\Pi^{(q)}_i X}^2 \Big)\,\dd s}} + \norm{H}{X(0)}^2,\\
\aabs{\cX_{k,q+1}}{X}^2&:= 
\sum_{i=0}^{N-1}{\int_{I_i}{\Big(\norm{\dot{H}^{-1}}{\dot{X}}^2 
+
\norm{\dot{H}^1}{\Pi^{(q)}_i X}^2 \Big)\,\dd s}} + \norm{H}{X(0)}^2,
\end{aligned}
\end{equation}
where now $\Pi^{(q)}$ is locally defined on each $I_i$ as the orthogonal 
$L^2$-projection onto the space of polynomials of degree at most $q$. In 
particular, the splitting introduced in \S~\ref{subs:decomposition} still 
holds.

\subsection{The roles of $U_1$ and $U_2$}
In this section we state a result that relates the two 
components of 
$U$ by means of a discretization based 
on the first space-time formulation. We start by considering the 
original problem \eqref{eq:HeatStrong}.
The first space-time formulation \eqref{eq:first_space_time} leads to the 
following discretization:
\begin{equation*}
\begin{aligned}
W\in \cX_{h,k,q+1}:\sB(W,Y) = \sF(Y), \quad
\forall Y \in \cY_{h,k,q} \times V_h,  
\end{aligned}
\end{equation*}
while the weak space-time formulation is given in \eqref{eq:discrete_form_q}.
The next theorem states that the discrete solutions to the 
first and to the weak formulations of \eqref{eq:HeatStrong} differ only up 
to a term proportional to the interpolation error of the right-hand side. Since 
this result is not central in this paper, we omit the proof.

\begin{theorem}\label{thm:U_V_non_homo}
If $f^{(\gamma)} \in L^2((0,T);V)$ 
for some $\gamma \in 
\mathbf{N}$, then
\begin{align*}
\norm{L^2((0,T);V)}{U_1-\Pi^{(q)}W} + \norm{H}{ U_2^{(N)} - W(t_N) } 
\leq C k^{\theta+1} \norm{L^2((0,T);V)}{f^{(\theta)}},
\end{align*}
where $\theta:={\min\{q+1,\gamma\}}$.
\end{theorem}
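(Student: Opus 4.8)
The plan is to realize $(\Pi^{(q)}W,\,W(t_N))$ as an approximate solution of the weak formulation \eqref{eq:discrete_form_q} and to control the error $e:=U-(\Pi^{(q)}W,\,W(t_N))\in\cY_{h,k,q}\times V_h$ through the discrete inf-sup stability, so that the whole argument reduces to computing and estimating a residual. Write $\Pi:=\Pi^{(q)}$ for the temporal $L^2$-projection onto piecewise polynomials of degree $\le q$. First I would extract two facts from the first-formulation problem $\sB(W,Y)=\sF(Y)$: choosing $Y=(0,Y_2)$ gives $\inner{W(0)-u_0}{Y_2}=0$ for all $Y_2\in V_h$, and choosing $Y=(Y_1,0)$ gives the Galerkin orthogonality $\int_0^T\dual{\dot W+AW-f}{Y_1}\,\dd s=0$ for all $Y_1\in\cY_{h,k,q}$. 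Since $U$ solves $\sB^*(U,X)=\sF(X)$ on $\cX_{h,k,q+1}$, for every $X\in\cX_{h,k,q+1}$ the residual is $\sB^*(e,X)=\sF(X)-\sB^*\big((\Pi W,W(t_N)),X\big)$.

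The key step is to collapse this residual. Integrating by parts in time in $\sB^*\big((\Pi W,W(t_N)),X\big)$ and using that $\dot X$ is piecewise of degree $\le q$ (so $\Pi$ may be dropped against it) I would obtain
\begin{equation*}
\sB^*\big((\Pi W,W(t_N)),X\big)=\inner{W(0)}{X(0)}+\int_0^T\inner{\dot W}{X}\,\dd s+\int_0^T\tdual{\Pi W}{AX}\,\dd s .
\end{equation*}
Subtracting $\sF(X)=\int_0^T\tdual{f}{X}\,\dd s+\inner{u_0}{X(0)}$, the boundary term vanishes because $W(0)-u_0\perp V_h\ni X(0)$. In the remaining volume terms I would insert the Galerkin orthogonality with $Y_1=\Pi X\in\cY_{h,k,q}$, use that $A$ (acting in space) commutes with $\Pi$ (acting in time) together with the symmetry of $a(\cdot,\cdot)$ to cancel the two $\int_0^T\tdual{\Pi W}{AX}\,\dd s$-type contributions, and finally use $\dot W=\Pi\dot W$ to annihilate $(I-\Pi)X$. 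What survives is the clean identity
\begin{equation*}
\sB^*(e,X)=\int_0^T\tdual{(I-\Pi)f}{(I-\Pi)X}\,\dd s ,
\end{equation*}
so the residual is exactly the temporal interpolation error of the data paired with the top-degree part of $X$.

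It then remains to bound this functional in the dual of $(\cX_{h,k,q+1},\aabs{\cX_{k,q+1}}{\cdot})$. On each $I_i$ I would estimate the two factors separately: the interpolation bound $\norm{L^2(I_i;V)}{(I-\Pi)f}\le C k_i^{\theta}\norm{L^2(I_i;V)}{f^{(\theta)}}$ with $\theta=\min\{q+1,\gamma\}$, and a Poincaré-type estimate for polynomials, $\norm{L^2(I_i;V^*)}{(I-\Pi)X}\le C k_i\norm{L^2(I_i;V^*)}{\dot X}$, which follows by comparing the leading Legendre coefficients of $X$ and of $\dot X$ on $I_i$. Because $f$ is $V$-valued by hypothesis, the pairing may be split as $\le\norm{V}{(I-\Pi)f}\,\norm{V^*}{(I-\Pi)X}$; this matches the $V$-norm of $f$ on the right-hand side of the theorem and lets the factor $\norm{L^2(I_i;V^*)}{\dot X}$ be absorbed \emph{directly} into $\aabs{\cX_{k,q+1}}{X}$, while the extra power $k_i$ from the Poincaré estimate is precisely what upgrades the order from $k^{\theta}$ to $k^{\theta+1}$. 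A Cauchy--Schwarz over the intervals yields $|\sB^*(e,X)|\le C k^{\theta+1}\norm{L^2((0,T);V)}{f^{(\theta)}}\,\aabs{\cX_{k,q+1}}{X}$. Invoking the discrete inf-sup with constant one in the modified norms (cf.\ the discussion following \eqref{eq:discretenorms}) gives $\aabs{\cY_H}{e}\le C k^{\theta+1}\norm{L^2((0,T);V)}{f^{(\theta)}}$, and unpacking the two summands of $\aabs{\cY_H}{e}$, namely the $V$-norm in time of $e_1=U_1-\Pi W$ and the $H$-norm of $e_2=U_2^{(N)}-W(t_N)$ via $\norm{V}{\cdot}\simeq\norm{\dot{H}^{1}}{\cdot}$, produces the stated estimate. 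I expect the algebraic bookkeeping in the residual computation, keeping track of which projections may be dropped and of the commutation of $A$ with $\Pi$, to be the main obstacle; once the residual collapses to $\int_0^T\tdual{(I-\Pi)f}{(I-\Pi)X}\,\dd s$ the remaining steps are standard, and notably no CFL condition is needed since the $V^*$-norm of $\dot X$ is already controlled by the modified trial norm.
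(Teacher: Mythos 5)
The paper gives no proof of Theorem~\ref{thm:U_V_non_homo} to compare against (it explicitly omits it), so your argument stands on its own, and it is correct: the residual identity $\sB^*\bigl(U-(\Pi^{(q)}W,W(t_N)),X\bigr)=\int_0^T\tdual{(I-\Pi^{(q)})f}{(I-\Pi^{(q)})X}\,\dd s$ checks out, since the boundary term vanishes because $W(0)-u_0\perp V_h$, the $\dot W$-term dies against $(I-\Pi^{(q)})X$, and the two $A$-terms cancel by self-adjointness of the temporal $L^2$-projection together with the symmetry of $a(\cdot,\cdot)$. The remaining ingredients—the order-$k^{\theta}$ projection estimate for $f$, the Poincar\'e-type bound (the paper's \eqref{eq:interpolation_err_sd}) supplying the extra factor $k_i$, and the discrete inf-sup with constant one in the mesh-dependent norms \eqref{eq:discretenorms}—are precisely the tools the paper itself establishes (following \cite{UrbanPatera2}), so your proof is fully consistent with the paper's framework and correctly needs no CFL condition.
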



\section{A priori error estimates}\label{sec:error}
In order to obtain error estimates for our scheme, we first rely on
the quasi-optimality theory, thus achieving an error estimate
consistent with the natural norm of the solution in
\eqref{eq:norm_sol_discrete}. However, numerical experiments (see
Figures~\ref{fig:superconvergence} and \ref{fig:superconvergence_2d})
and Theorem~\ref{thm:second_order_convergence} suggest that the second
component of the solution converges faster, with a rate proportional
to $k^2$. This is consistent with the fact that our method is a
modification of the standard Crank--Nicolson method. By means of a
duality argument we give a rigorous proof of this fact in
Theorem~\ref{thm:superconvergence} in Section~\ref{sec:semidiscrete}.

\subsection{Quasi-optimality}\label{subs:quasi_optimality}
We consider the subspaces 
$\cY_{h,k} \times V_h \subset \cY_H$ and $\cX_{h,k}\subset \cX$ previously 
introduced, endowed with the norms $ \aabs{\cY_H}{\cdot}$ and $ 
\aabs{\cX_k}{\cdot}$, respectively. The following result of 
quasi-optimality holds:


\begin{theorem}\label{thm:quasi_opt_unsharp}
If $u$ and $U$ are solutions to \eqref{eq:abstract_problem_param_t} 
and \eqref{eq:discrete_problem}, respectively, the error $u-U$ satisfies the 
following bound:
\begin{equation}\label{eq:quasi_optimality_estimate}
\begin{aligned}
&\Am \norm{L^2((0,t_n);V)}{u_1-U_1}^2 + 
\norm{H}{u_2(t_{n})-U_2^{(n)}}^2 \\
&\quad \leq 
\max\{1,c_{\rm S}\}^2 \Big( \AM \norm{L^2((0,t_n);V)}{u_1-Y_1}^2 + 
\norm{H}{u_2(t_{n})-Y_2^{(n)}}^2 
\Big),
\end{aligned}
\end{equation}
for arbitrary $Y_1 \in \cY_{h,k}$ and $Y_2^{(n)} \in V_h$ and for any $n$.
In particular, it follows that
\begin{equation}\label{eq:quasi_optimality_estimate2}
 \begin{aligned}
 & \Am^{}\norm{L^2((0,T);V)}{u_1-U_1}^2 + 
\max_{i=1,\ldots,N}\norm{H}{u_2(t_{i})-U^{(i)}_2}^2 \\
&\quad \leq
\max\{1,c_{\rm S}\}^2\Big( \AM^{}\norm{L^2((0,T);V)}{u_1-Y_1}^2 + 
\max_{i=1,\ldots,N}\norm{H}{u_2(t_{i})-Y_2^{(i)}}^2 \Big).
\end{aligned}
\end{equation}
\end{theorem}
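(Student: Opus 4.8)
The plan is to derive the local estimate \eqref{eq:quasi_optimality_estimate} from a standard Céa-type quasi-optimality argument, exploiting the fact that both the continuous and discrete problems have inf-sup and boundedness constants equal to $1$ in the modified norms, and that the discrete spaces are genuine subspaces of the continuous ones (so the Petrov--Galerkin scheme is conforming). I would then obtain the global estimate \eqref{eq:quasi_optimality_estimate2} by taking the maximum over $n$ in the pointwise term and using the monotonicity of the $L^2((0,t_n);V)$ contribution.

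First I would invoke Galerkin orthogonality. Since $\sB^*(u,X)=\sF(X)$ for all $X\in\cX^t$ (in particular for $X\in\cX_{h,k}$) and $\sB^*(U,X)=\sF(X)$ for all $X\in\cX_{h,k}$, subtraction gives $\sB^*(u-U,X)=0$ for every discrete test function $X$. Writing the discrete error as $U-Y=(U_1-Y_1,\,U_2^{(n)}-Y_2^{(n)})\in\cY_{h,k}^n\times V_h$ for an arbitrary discrete $Y$, I would use the discrete inf-sup condition (with constant $1$ in the norm $\aabs{\cY^t_H}{\cdot}$ and $\aabs{\cX_k}{\cdot}$, as established via Lemma~\ref{lemma:equiv_norm} and Theorem~\ref{thm:bnbthm} transported to the discrete spaces) to bound $\aabs{\cY^t_H}{U-Y}$ by $\sup_X \sB^*(U-Y,X)/\aabs{\cX_k}{X}$. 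Then I would insert $U-Y=(U-u)+(u-Y)$, kill the term $\sB^*(U-u,X)=0$ by Galerkin orthogonality, and bound $\sB^*(u-Y,X)$ using boundedness (constant $1$) to get $\aabs{\cY^t_H}{U-Y}\le\aabs{\cY^t_H}{u-Y}$, i.e. the discrete error is controlled by the best-approximation error in the modified norm. A triangle inequality $\aabs{\cY^t_H}{u-U}\le\aabs{\cY^t_H}{u-Y}+\aabs{\cY^t_H}{U-Y}\le 2\aabs{\cY^t_H}{u-Y}$ would then give quasi-optimality with a universal constant.

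The main obstacle, and the reason the stated constants involve $\max\{1,c_{\rm S}\}$, $\AM$, and $\Am$ rather than a clean factor of $2$, is that the estimate \eqref{eq:quasi_optimality_estimate} is written in the \emph{original} norms $\norm{L^2((0,t_n);V)}{\cdot}$ and $\norm{H}{\cdot}$, not in the algebraically convenient modified norms in which the inf-sup constant is exactly $1$. I would therefore need to pass between the two norm systems: on the left I use $\Am\norm{L^2;V}{\cdot}^2\le\aabs{\cY^t_H}{\cdot}^2$ (coercivity, relating $\norm{V}{\cdot}$ to $\norm{H}{A^{1/2}\cdot}$), and on the right I use boundedness $\aabs{\cY^t_H}{\cdot}^2\le\AM\norm{L^2;V}{\cdot}^2$, together with the norm-equivalence factor $\max\{1,c_{\rm S}\}$ from \eqref{eq:equivalence_norm_cX} that accounts for replacing $\aabs{\cX}{\cdot}$ by the discretization-dependent $\aabs{\cX_k}{\cdot}$. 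Since the pointwise term $\norm{H}{u_2(t_n)-U_2^{(n)}}$ keeps the clean constant $1$ in both norm systems (this is precisely the advantage emphasized in Section~\ref{sec:abstract_problem}), the $A$-weighted factors attach only to the $V$-norm terms, yielding exactly the asymmetric constants displayed in \eqref{eq:quasi_optimality_estimate}.

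Finally, to pass from the local statement to \eqref{eq:quasi_optimality_estimate2}, I would note that the local quasi-optimality holds at every node $t_n$ simultaneously because the scheme admits the temporal decomposition \eqref{eq:the_scheme}, so $U_2^{(n)}$ is the correct local approximation on $[0,t_n]$ for each $n$. Taking $n=N$ in the $V$-norm term (which is the full interval) and replacing the single pointwise term by $\max_{i}\norm{H}{u_2(t_i)-U_2^{(i)}}^2$ on the left, while using $\max_i$ of the best-approximation pointwise errors on the right, would then produce the global bound. I expect this last step to be routine once the local estimate is in hand.
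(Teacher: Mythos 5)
Your scaffolding (Galerkin orthogonality, the discrete inf-sup constant $c_B=1$ in the modified norms, boundedness $C_B=1$ combined with the norm equivalence \eqref{eq:equivalence_norm_cX}, and the asymmetric $\Am$/$\AM$ conversion back to the original norms) matches the paper's, but your central step does not deliver the constant the theorem asserts. Your chain gives $\aabs{\cY_H}{U-Y}\le \max\{1,c_{\rm S}\}\,\aabs{\cY_H}{u-Y}$ (note: not constant $1$ as claimed in your second paragraph — the boundedness constant $1$ holds with respect to $\aabs{\cX}{\cdot}$, and passing to $\aabs{\cX_k}{\cdot}$ in the supremum over discrete test functions costs exactly the factor $\max\{1,c_{\rm S}\}$ that you only reinstate later), and the triangle inequality then yields
\begin{equation*}
\aabs{\cY_H}{u-U}\le\bigl(1+\max\{1,c_{\rm S}\}\bigr)\,\aabs{\cY_H}{u-Y},
\end{equation*}
i.e.\ the squared estimate with constant $\bigl(1+\max\{1,c_{\rm S}\}\bigr)^2\ge 4$, which is strictly weaker than the stated $\max\{1,c_{\rm S}\}^2$. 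Since \eqref{eq:quasi_optimality_estimate} asserts that specific constant, a proof routed through the triangle inequality does not prove the theorem as stated; there is also no Pythagorean identity to rescue it, because the method is Petrov--Galerkin and the error is not orthogonal to the discrete trial space in the $\cY_H$ inner product.

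The paper avoids the triangle inequality altogether. It introduces the Ritz projection $R\colon \cY_H\to\cY_{h,k}\times V_h$ defined by $\sB^*(R\phi,X)=\sB^*(\phi,X)$ for all $X\in\cX_{h,k}$ (so that $Ru=U$ and $R$ is idempotent), writes $u-U=(I-R)u=(I-R)(u-Y)$, and invokes the Hilbert-space identity $\norm{\sL(\cY_H)}{I-R}=\norm{\sL(\cY_H)}{R}$ for idempotent operators (Xu--Zikatanov). The operator norm of $R$ is then bounded by $(C_B/c_B)\max\{1,c_{\rm S}\}=\max\{1,c_{\rm S}\}$ using exactly the estimates you already have, and squaring plus the $\Am$/$\AM$ conversion gives \eqref{eq:quasi_optimality_estimate}. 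So the missing idea is precisely this identity; the rest of your plan is reusable as written. That this is not a cosmetic point is confirmed by the paper's own remark after Theorem~\ref{thm:quasi_opt_mod}: in the semidiscrete setting the identity cannot be used (different norms appear on the two sides of $U=Ru$), and there the authors do settle for a triangle-inequality argument with an unspecified constant $C$ — whereas here the sharper constant is part of the claim.
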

\begin{proof}
We consider the problem on $(0,t_n)$ with arbitrary $t_n$ and omit
$t_n$ in the notation for the spaces and bilinear form. We denote by
$R \colon \cY_H \mapsto \cY_{h,k}\times V_h$ the Ritz projection,
defined as $R u = U$, that is,
\begin{align}\label{eq:Ritz_projection}
\sB^*(R \phi, X) = \sB^*(\phi, X), \quad \forall X\in \cX_{h,k}.
\end{align}
Since $R$ is idempotent and $\cY_H$ is a Hilbert space, we have $\norm{\sL(\cY_H)}{I-R} = 
\norm{\sL(\cY_H)}{R} $ (see \cite{XuZik}), so that, for any $Y\in
\cY_{h,k}\times V_h$, 
\begin{align*} 
\aabs{\cY_H}{u-U} &=\aabs{\cY_H}{(I-R)u} =\aabs{\cY_H}{(I-R)(u-Y)} 
\leq \norm{\sL(\cY_H)}{R}  \aabs{\cY_H}{u-Y}.
\end{align*}
Here, we have 
\begin{align*}
\norm{\sL(\cY_H)}{R} &=
\sup_{\phi \in \cY_H}\frac{ \aabs{\cY_H}{R \phi} }{ 
\aabs{\cY_H}{\phi}} \leq 
\frac{1}{c_B} \sup_{\phi \in \cY_H} \sup_{X\in
\cX_{h,k}} \frac{\sB^*(R \phi,X)}{\aabs{\cY_H}{\phi}\aabs{\cX_{k}}{X} 
} \\
&=\frac{1}{c_B} \sup_{\phi \in \cY_H} \sup_{X\in
\cX_{h,k}} \frac{\sB^*(\phi,X)}{\aabs{\cY_H}{\phi}\aabs{\cX_{k}}{X} }
\leq 
\frac{C_B}{c_B} \sup_{\phi \in \cY_H} \sup_{X\in 
\cX_{h,k}} \frac{  \aabs{\cY_H}{\phi} 
\aabs{\cX}{X}}{\aabs{\cY_H}{\phi} \aabs{\cX_{k}}{X} }, 
\end{align*}
where we first used the discrete counterpart of \eqref{eq:infsup} with respect 
to $\aabs{\cY_H}{\cdot} $ and $\aabs{\cX_k}{\cdot} $,  then 
\eqref{eq:Ritz_projection}, and  \eqref{eq:bdd}.  
Finally, by means of \eqref{eq:equivalence_norm_cX} we obtain that
\begin{align*}
\frac{C_B}{c_B} \sup_{\phi \in \cY_H} \sup_{X\in
\cX_{h,k}} \frac{  \aabs{\cY_H}{\phi} 
\aabs{\cX}{X}}{\aabs{\cY_H}{\phi} \aabs{\cX_{k}}{X} } \leq 
\frac{C_B}{c_B} \max\{1,c_{\rm S}\} =  \max\{1,c_{\rm S}\},
\end{align*}
since $C_B = c_B = 1$. Since $Y \in {\cY_{h,k}}\times V_h$ is
arbitrary, \eqref{eq:quasi_optimality_estimate} follows by using the
equivalence between the norms $\norm{L^2((0,t_n);V)}{\cdot}$ and
$\norm{L^2((0,t_n);\dot{H}^1)}{\cdot}$. Since $t_n$ is arbitrary, the
second bound \eqref{eq:quasi_optimality_estimate2} follows as well.
\end{proof}

\subsection{Convergence}
We first show convergence of the method under 
minimal assumptions, namely a right-hand side $\sF \in 
\cX^*$ and no further regularity.
\begin{theorem}\label{thm:convergence}
Let $u$ and $U$ be solutions to \eqref{eq:abstract_problem_param_t} and 
\eqref{eq:discrete_problem}, respectively.
If we assume the validity of \eqref{eq:CFL}, and if $\sF \in 
\cX^*$, then $\norm{\cY_H}{u - U} \rightarrow 0 $ as $k,h \rightarrow 0$.
\end{theorem}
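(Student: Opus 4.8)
The plan is to combine the quasi-optimality estimate from Theorem~\ref{thm:quasi_opt_unsharp} with a density argument. The essential observation is that \eqref{eq:quasi_optimality_estimate2} holds for \emph{arbitrary} discrete trial functions $Y\in\cY_{h,k}\times V_h$, so convergence reduces to showing that the solution $u=(u_1,u_2)$ can be approximated by discrete functions in the relevant norm, provided the constant $\max\{1,c_{\rm S}\}$ stays bounded. First I would invoke the CFL condition \eqref{eq:CFL}: by the discussion following \eqref{eq:equivalence_norm_cX}, condition \eqref{eq:CFL} guarantees that $c_{\rm S}$ is uniformly bounded in $h$ and $k$, so the prefactor $\max\{1,c_{\rm S}\}^2\AM$ on the right-hand side of \eqref{eq:quasi_optimality_estimate2} is bounded by a constant $C$ independent of the discretization. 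Thus it suffices to prove that
\begin{equation*}
\inf_{Y_1\in\cY_{h,k}}\norm{L^2((0,T);V)}{u_1-Y_1}^2 + \inf_{Y_2^{(i)}\in V_h}\max_{i=1,\ldots,N}\norm{H}{u_2(t_i)-Y_2^{(i)}}^2 \longrightarrow 0
\end{equation*}
as $k,h\to0$.

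The spatial and temporal best-approximation errors can be treated separately. For the $L^2((0,T);V)$ term, since $\sF\in\cX^*$ we know from Theorem~\ref{thm:maintheorem2} (or its appropriate variant) that $u_1\in L^2((0,T);V)$, and the tensor-product spaces $\cY_{h,k}=Q_k\otimes V_h$ are dense in $L^2((0,T);V)$ as $k,h\to0$. Concretely, I would approximate $u_1$ first by a spatially discrete function using the $V$-orthogonal projection onto $V_h$, whose error tends to zero by density of $\bigcup_h V_h$ in $V$ together with dominated convergence in the Bochner integral, and then approximate in time by the piecewise-constant $L^2$-projection onto $Q_k$, whose error tends to zero for any $L^2((0,T);V)$ function. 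For the nodal term, I would use that $u_2\in\sC([0,T];H)$, so the point values $u_2(t_i)$ are well-defined elements of $H$; approximating each by its $H$-orthogonal projection onto $V_h$ gives an error controlled uniformly in $i$ by $\sup_{t\in[0,T]}\norm{H}{u_2(t)-P_h u_2(t)}$, which tends to zero by density and the uniform continuity of $t\mapsto u_2(t)$ on the compact interval $[0,T]$.

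The main obstacle is the nodal term. Unlike the $L^2$-in-time error, which is forgiving, the quantity $\max_{i}\norm{H}{u_2(t_i)-Y_2^{(i)}}$ is a supremum over increasingly many time nodes, and one must ensure the approximation is uniform in $i$ even as $N\to\infty$. The key is that $u_2$ is a \emph{single} continuous $H$-valued function, so its image $\{u_2(t):t\in[0,T]\}$ is a compact subset of $H$; the $H$-projections $P_h$ then converge to the identity uniformly on this compact set, giving $\sup_{t}\norm{H}{(I-P_h)u_2(t)}\to0$ without any dependence on the temporal mesh. This decouples the spatial and temporal limits cleanly and shows that the right-hand side of \eqref{eq:quasi_optimality_estimate2} vanishes as $k,h\to0$, whence $\norm{\cY_H}{u-U}\to0$. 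I would remark that no rate is asserted here precisely because $\sF\in\cX^*$ carries no regularity beyond what guarantees $u_1\in L^2((0,T);V)$ and $u_2\in\sC([0,T];H)$; quantitative rates are deferred to the estimates under additional smoothness.
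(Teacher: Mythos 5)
Your overall strategy coincides with the paper's: invoke the quasi-optimality result of Theorem~\ref{thm:quasi_opt_unsharp}, use \eqref{eq:CFL} to make the constant $\max\{1,c_{\rm S}\}$ independent of $h$ and $k$, and then show that the best-approximation error on the right-hand side vanishes as $k,h\to0$. Where you genuinely differ is in how that approximation step is carried out. The paper first approximates $u$ in the $\cY_H$-norm by a smooth function $v_\epsilon$ from a dense subspace ($\sV = H^1((0,T);V)\times V$) and then applies interpolation error estimates to $v_\epsilon$, choosing $h(\epsilon)$, $k(\epsilon)$ afterwards; you instead project $u$ itself, combining the temporal $L^2$-projection onto $Q_k$, the spatial orthogonal projection onto $V_h$, and dominated convergence. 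For the $L^2((0,T);V)$-component both routes are sound, and yours has the small merit of not requiring any interpolation rate for nonsmooth functions (the paper's intermediate bound $C(h+k)\norm{\sV}{v_{\epsilon}}$ is in fact a bit loose on the spatial side, since $V$-regularity alone yields convergence but no rate in $h$).

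The gap is in your treatment of the nodal term. You chose to prove the stronger statement coming from \eqref{eq:quasi_optimality_estimate2}, with the maximum over \emph{all} temporal nodes, and for that you invoke $u_2\in\sC([0,T];H)$, compactness of its image, and uniform convergence of the $H$-orthogonal projections on that image. But the only hypothesis is $\sF\in\cX^*$, and in that generality $u_2$ need \emph{not} be continuous in time: the paper's own example in \S~\ref{subsubsub:pw} shows that functionals containing point terms $\inner{\zeta_i}{x(t_i)}$ (which are bounded on $\cX$ precisely because $\cX\hookrightarrow\sC([0,T];H)$) produce a $u_2$ with jumps at the times $t_i$. So the compactness step fails for general admissible right-hand sides. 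The fix is to notice that you are proving more than the theorem asserts: the norm $\norm{\cY_H}{u-U}$ is the product norm on $\cY\times H$, so only the single difference $u_2(T)-U_2^{(N)}$ enters. Using the single-node estimate \eqref{eq:quasi_optimality_estimate} with $n=N$, exactly as the paper does, the nodal part of the best-approximation error is $\inf_{Y_2\in V_h}\norm{H}{u_2(T)-Y_2}$, which tends to zero by density of $\bigcup_h V_h$ in $H$ alone; no continuity or compactness of the map $t\mapsto u_2(t)$ is needed. With that restriction your argument is complete and correct.
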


\begin{proof}
From the quasi-optimality theorem we have 
\begin{align*}
\norm{\cY_H}{u-U} \leq C \norm{\cY_H}{u-Y}, 
\quad \mbox{for any } Y \in \cY_{h,k}\times V_h,
\end{align*}
where $C$ depends on $\Am$, $\AM$, and $c_{\rm S}$, hence independent
of $h$ and $k$ due to \eqref{eq:CFL}.  We choose $\sV$ to be a space
of sufficiently smooth functions, dense in $\cY_H$, for example
$\sV:=H^1((0,T);V)\times V$.  For arbitrary $\epsilon$, we choose
$v_{\epsilon} \in \sV$ such that, by density, 
\begin{align*}
\norm{\cY_H}{u - v_{\epsilon}} \leq \epsilon/2.
\end{align*}
We then choose $h = h(\epsilon)$ and $k = k(\epsilon)$ such that 
$\tilde{v}_{\epsilon} \in \cY_{h,k} \times V_h$, which denotes the interpolant 
of 
$v_{\epsilon}$, satisfies
\begin{align*}
 \norm{\cY_H}{v_{\epsilon} -\tilde{ v}_{\epsilon}} 
\leq C(h + k)\norm{\sV}{v_{\epsilon}} \leq  \epsilon/2.
\end{align*}
We conclude
\begin{align*}
\norm{\cY_H}{u - U}  \leq C \norm{\cY_H}{u - \tilde{v}_\epsilon} \leq C\Big(
\norm{\cY_H}{u - v_\epsilon} + 
\norm{\cY_H}{v_\epsilon - \tilde{v}_\epsilon} \Big) \leq C \epsilon.
\end{align*}
Since $\epsilon$ is arbitrary, the claim follows.
\end{proof}

\subsection{Convergence of first order in time}

In order to prove the next results we assume that the spatial
discretization is done by using a polynomial space of sufficiently
high degree, so that all the quantities we use make sense and are not
trivial.  This choice is not strictly necessary but it is motivated by
the fact that condition \eqref{eq:CFL} becomes $k \lesssim h^2$ in the
case, for example, of spatial discretization with Lagrange
elements. Thus, in order to have consistency between the spatial and
the temporal rate of convergence, we need to have order $2$ in the
spatial $H^1$-norm in the following theorem (polynomials of degree
$p=2$), and similarly order $4$ in the one after.

We make once again use of the spaces $\dot{H}^{\beta}$ as in 
\S~\ref{subsec:spatial_reg}. The right-hand side of the expression in 
\eqref{eq:quasi_optimality_estimate2} can be further estimated by 
means of standard interpolation estimates, thus we obtain the following 
theorem:
\begin{theorem}\label{thm:error_quasi_optimal_expl}
Let $u$ and $U$ be solutions to \eqref{eq:abstract_problem_param_t} and 
\eqref{eq:discrete_problem}, respectively. For sufficiently smooth
data $f$ and $u_0$, and assuming the validity of \eqref{eq:CFL}, we have:
\begin{equation*}
 \begin{aligned}
&\norm{L^2((0,T);V)}{u_1 - U_1} + 
\max_{i=1,\ldots,N}\norm{H}{u_2(t_i) - U^{(i)}_2} \\
&\qquad\leq C( k + h^2) \Big( \norm{L^2((0,T);\dot{H}^1)}{f} + 
\norm{\dot{H}^2}{u_0} \Big).
\end{aligned}
\end{equation*}
\end{theorem}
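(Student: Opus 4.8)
The plan is to apply the quasi-optimality estimate \eqref{eq:quasi_optimality_estimate2} and then bound the best-approximation error on the right-hand side by choosing $Y_1$ and $Y_2^{(i)}$ to be suitable interpolants of the exact solution. Concretely, I would take $Y_1 = \Pi^{(0)} \Pi_h u_1$, the temporal $L^2$-projection onto piecewise constants composed with a spatial projection (Ritz or $L^2$) onto $V_h$, and $Y_2^{(i)} = \Pi_h u_2(t_i)$, the spatial projection of the nodal value. Since $C_B = c_B = 1$ and the CFL condition \eqref{eq:CFL} guarantees that $c_{\rm S}$ is uniformly bounded, the constant $\max\{1,c_{\rm S}\}^2$ on the right is harmless. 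Thus it suffices to estimate the two interpolation errors
\begin{equation*}
\norm{L^2((0,T);V)}{u_1 - Y_1} \quad\text{and}\quad \max_{i}\norm{H}{u_2(t_i) - Y_2^{(i)}}
\end{equation*}
separately.

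The second term is the easier one: since $Y_2^{(i)}$ is a purely spatial approximation of the nodal value $u_2(t_i)$, a standard finite element approximation estimate gives $\norm{H}{u_2(t_i) - \Pi_h u_2(t_i)} \le C h^2 \norm{\dot H^2}{u_2(t_i)}$, and by the spatial regularity Theorem~\ref{maintheorem2_beta} (with $\beta = 2$) the nodal values are controlled by the data $\norm{L^2((0,T);\dot H^1)}{f} + \norm{\dot H^2}{u_0}$. For the first term I would split the error into a temporal and a spatial contribution, writing $u_1 - \Pi^{(0)}\Pi_h u_1 = (u_1 - \Pi^{(0)} u_1) + \Pi^{(0)}(u_1 - \Pi_h u_1)$. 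The temporal part $\norm{L^2((0,T);V)}{u_1 - \Pi^{(0)} u_1}$ is the $L^2$-projection error onto piecewise constants in time, which is of order $k$ and controlled by $\norm{H^1((0,T);V)}{u_1}$, that is, by $\norm{L^2((0,T);\dot H^1)}{\dot u_1}$; the regularity needed here comes again from Theorem~\ref{maintheorem2_beta}, which furnishes $u_1 = u_2 \in H^1((0,T);\dot H^{\beta-1})$. The spatial part contributes the $h^2$ term by the usual $O(h^2)$ approximation property in the $V = \dot H^1$ norm, uniformly in time.

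Combining these, the right-hand side of \eqref{eq:quasi_optimality_estimate2} is bounded by $C(k + h^2)^2$ times the squared data norms, and taking square roots yields the claimed estimate. The main technical point to get right is the bookkeeping of temporal versus spatial regularity: one must verify that all the norms invoked on $u_1$ and $u_2$ are indeed finite under the stated smoothness of $f$ and $u_0$, which is precisely what Theorem~\ref{maintheorem2_beta} delivers for $\beta = 2$ (giving $u_1 \in L^2((0,T);\dot H^3) \cap H^1((0,T);\dot H^1)$ and $u_2 \in \sC([0,T];\dot H^2)$). The only real subtlety is ensuring the temporal projection error is measured in the correct spatial norm so that the interpolation estimates and the regularity theorem match up; everything else is a routine application of standard interpolation and projection bounds. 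I expect no genuine obstacle beyond this regularity bookkeeping, since the heavy lifting — the uniform inf-sup stability with unit constants — has already been done in Theorems~\ref{thm:bnbthm} and \ref{thm:quasi_opt_unsharp}.
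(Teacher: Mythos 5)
Your proposal is correct and follows essentially the same route as the paper's proof: apply the quasi-optimality estimate \eqref{eq:quasi_optimality_estimate2} (with $c_{\rm S}$ uniformly bounded via \eqref{eq:CFL}), bound the best-approximation error by standard temporal and spatial interpolation estimates of orders $k$ and $h^2$ respectively, and control the resulting solution norms by the data through Theorem~\ref{maintheorem2_beta} with $\beta=2$. The paper merely leaves the choice of interpolants implicit, whereas you spell out the tensor-product construction $\Pi^{(0)}\Pi_h u_1$ and the nodal projection $\Pi_h u_2(t_i)$; the regularity bookkeeping you describe matches the paper's intermediate bound exactly.
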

\begin{proof}
Quasi-optimality \eqref{eq:quasi_optimality_estimate2} and interpolation error 
estimates give us that 
\begin{equation*}
 \begin{aligned}
&\norm{L^2((0,T);V)}{u_1 - U_1} + 
\max_{i=1,\ldots,N}\norm{H}{u_2(t_i) - U^{(i)}_2} \\
&\quad\leq C \Big( k 
\norm{L^2((0,T);\dot{H}^{1})}{\dot{u}} + h^2\big(\norm{L^2((0,T);\dot{H}^3)}{u} 
+ 
 \max_{i=1,\ldots,N}\norm{\dot{H}^2}{u_2(t_i)} \big) \Big),
\end{aligned}
\end{equation*}
for $u$ sufficiently smooth, with $C$ depending on $c_{\rm S}$, $\Am$,
and $\AM$. In particular, if the right-hand side is of the form
defined in \eqref{eq:regular_rhs}, we can rely on
Theorem~\ref{maintheorem2_beta} with $\beta=2$ to prove the claim.
\end{proof}

\subsection{Convergence of second order in time}\label{subsec:superconv}
By means of the connection between first and second discrete space-time 
formulation and by using the fact that the first space-time formulation seen as 
a time stepping coincides with the traditional Crank--Nicolson scheme, we can 
obtain the following result:
\begin{theorem}\label{thm:second_order_convergence}
The scheme in \eqref{eq:the_scheme} converges with a rate proportional to $k^2$ 
at the grid points $\{t_i\}_{i=1,\ldots,N}$ for sufficiently smooth
data. 
\end{theorem}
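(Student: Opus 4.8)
The plan is to exploit the connection established in Theorem~\ref{thm:U_V_non_homo}, which compares the discrete solution $U$ of the weak space-time formulation with the discrete solution $W$ of the first space-time formulation. The key observation I would use is that the first space-time discretization, when written as a time-stepping scheme, coincides with the classical Crank--Nicolson method, for which second-order convergence at the grid points is a well-known and classical fact. Thus the strategy reduces the claim to a triangle-inequality argument: I would bound the error $\norm{H}{u_2(t_i)-U_2^{(i)}}$ at each grid point by splitting it through the Crank--Nicolson solution $W$.

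More concretely, I would proceed as follows. First, recall that for the regular right-hand side of \S~\ref{subsubsub:regular_RHS} we have $u_1=u_2=u\in\cX$, so that $u_2(t_i)=u(t_i)$ coincides with the exact solution evaluated at the temporal node. Second, I would write
\begin{align*}
\norm{H}{u(t_i)-U_2^{(i)}} \leq \norm{H}{u(t_i)-W(t_i)} + \norm{H}{W(t_i)-U_2^{(i)}}.
\end{align*}
The first term on the right is the error of the Crank--Nicolson scheme at the grid point $t_i$, which is $O(k^2)$ for sufficiently smooth data by the classical error analysis of Crank--Nicolson. The second term is controlled by Theorem~\ref{thm:U_V_non_homo}: taking $n$ so that $t_n=t_i$ and using that $U_2^{(N)}$ there plays the role of our $U_2^{(i)}$ on the subinterval $(0,t_i)$, the bound gives a term proportional to $k^{\theta+1}\norm{L^2((0,T);V)}{f^{(\theta)}}$ with $\theta=\min\{q+1,\gamma\}$; for $q\ge0$ and smooth $f$ this is at least $O(k^2)$.

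Combining these two estimates yields the desired $O(k^2)$ rate for the error in the second component at the grid points. I would emphasize that the argument is essentially a \emph{reduction} rather than a direct computation: once the identification with Crank--Nicolson is in place and Theorem~\ref{thm:U_V_non_homo} is invoked, the rate follows immediately. The main obstacle I anticipate is making the identification with Crank--Nicolson fully rigorous, in particular verifying that the boundary steps (the initial backward-Euler step and the final forward-Euler step described in \S~\ref{subsec:discretization_low}) do not degrade the second-order accuracy of the interior Crank--Nicolson steps at the nodes. Since this theorem is presented as a heuristic bridge to the rigorous duality-based proof of Theorem~\ref{thm:superconvergence} in Section~\ref{sec:semidiscrete}, I expect the authors to treat the Crank--Nicolson comparison at a relatively informal level, deferring the careful justification of the superconvergence rate $2(q+1)$ to the direct duality argument in the semidiscrete setting.
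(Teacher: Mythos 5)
Your proposal is correct and follows essentially the same route as the paper: a triangle inequality through the discrete solution $W$ of the first space-time formulation, bounding $\norm{H}{U_2^{(i)}-W(t_i)}$ by $Ck^2$ via Theorem~\ref{thm:U_V_non_homo} and $\norm{H}{u_2(t_i)-W(t_i)}$ by $Ck^2$ via the identification of the primal discretization with the Crank--Nicolson time stepping. You also correctly anticipated that the paper keeps this comparison informal and defers the rigorous argument to the duality proof of Theorem~\ref{thm:superconvergence}.
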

\begin{proof}
We take $W$ as in Theorem~\ref{thm:U_V_non_homo}, and notice that for every 
$t_i$ we have:
\begin{align*}
\norm{H}{U_2^{(i)} - u_2(t_i)} \leq  \norm{H}{U_2^{(i)} - 
W(t_i)} + \norm{H}{u_2(t_i) - W(t_i)}.
\end{align*}
We can bound the first term by $Ck^2$ according to
Theorem~\ref{thm:U_V_non_homo}. 
The primal formulation produces exactly the 
Crank--Nicolson time stepping, so that the second term is also bounded
by $Ck^2$. 
\end{proof}


\section{Temporal semidiscretization}\label{sec:semidiscrete}
We provide in Theorem~\ref{thm:superconvergence} a direct proof of the
result of Theorem~\ref{thm:second_order_convergence}, that does not
rely on a comparison with the Crank--Nicolson method and that extends
to arbitrary degree.  Following \cite[Theorem~12.3]{Thomee} we present
only the temporally semidiscrete part of the error, since our main
focus is the time discretization.  The proof is based on a duality
argument but first we need to develop a substitute for the
quasi-optimality theory in the semidiscrete case.

\subsection{Existence and uniqueness}
We introduce the following notation for 
the temporally semidiscrete spaces:
\begin{align*}
&\cY_{k,q} := \{ Y\in \cY : Y\big|_{I_i} \in \bP^{q}[t]\otimes \dot{H}^1 
\}, 
&&\cX_{k,q+1} := \{ X\in \cX : X\big|_{I_i} \in \bP^{q+1}[t]\otimes 
\dot{H}^1 \},
\end{align*} 
and we endow $\cX_{k,q+1}$ with the norm $\aabs{\cX_{k,q+1}}{\cdot}$ which we 
introduced in \eqref{eq:discretenorms}.
The semidiscrete problem reads:
\begin{equation} \label{eq:semidiscrete_form_q}
\hat{U} \in {\cY_{k,q} \times H}\colon \sB^*(\hat{U},X) = \sF(X), 
\quad \forall X \in
{\cX_{k,q+1}}.
\end{equation}
In particular, we can split the scheme as in \eqref{eq:the_scheme} in 
order to produce pointwise values of $\hat{U}^{(i)}_2$ at each $t_i$.

Our main concern is to avoid the use of \eqref{eq:equivalence_norm_cX}, 
because $c_{\rm S}$ would not be finite in the semidiscrete 
case. It turns out that a consistent theory 
of existence 
and uniqueness based on the  Banach--Ne{\v{c}}as--Babu{\v{s}}ka can be derived 
even in this case, although more regularity on $f$ must be assumed.
We start by presenting a semidiscrete version of Lemma~\ref{lemma:equiv_norm}:

\begin{lemma}\label{lemma:equiv_norm_sd}
The norm $\nnnorm{\cX_{k,q+1}}{\cdot}$, defined by
\begin{align*}
\nnorm{\cX_{k,q+1}}{X} := \norm{H}{X(t)}^2 + 
\sum_{i=1}^{N-1}\int_{I_i} \norm{H}{A^{\frac12}\Pi^{(q)} X(s) 
-A^{-\frac12}\dot{X}(s)}^2\,\dd s 
\end{align*}
is equal on $ \cX_{k,q+1} $ to the norm $\aabs{\cX_{k,q+1}}{\cdot}$.
\end{lemma}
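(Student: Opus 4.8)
The plan is to mirror the proof of Lemma~\ref{lemma:equiv_norm}: expand the square in $\nnnorm{\cX_{k,q+1}}{\cdot}$ interval by interval, isolate the resulting cross term, and show that the cross terms telescope in a way that exactly converts the endpoint contribution $\norm{H}{X(t)}^2$ into the initial contribution $\norm{H}{X(0)}^2$. On each $I_i$ I would first use self-adjointness of $A^{\frac12}$ to rewrite $\inner{A^{\frac12}\Pi^{(q)}_iX}{A^{-\frac12}\dot X} = \inner{\Pi^{(q)}_iX}{\dot X}$, so that
\begin{align*}
\int_{I_i}\norm{H}{A^{\frac12}\Pi^{(q)}_iX -A^{-\frac12}\dot X}^2\,\dd s
&= \int_{I_i}\Big(\norm{H}{A^{\frac12}\Pi^{(q)}_iX}^2 + \norm{H}{A^{-\frac12}\dot X}^2\Big)\,\dd s \\
&\quad - 2\int_{I_i}\inner{\Pi^{(q)}_iX}{\dot X}\,\dd s.
\end{align*}

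The new ingredient relative to the continuous case is the treatment of the cross term, where the projection $\Pi^{(q)}_i$ sits on one factor but not the other. The key observation is that $X|_{I_i}$ is a polynomial of degree at most $q+1$ in time, so $\dot X|_{I_i}$ has degree at most $q$ and therefore lies in the range of $\Pi^{(q)}_i$, giving $\Pi^{(q)}_i\dot X = \dot X$. Since $\Pi^{(q)}_i$ is an orthogonal projection, hence self-adjoint in $L^2(I_i;H)$, I can transfer it onto the other factor:
\begin{align*}
\int_{I_i}\inner{\Pi^{(q)}_iX}{\dot X}\,\dd s
&= \int_{I_i}\inner{X}{\Pi^{(q)}_i\dot X}\,\dd s = \int_{I_i}\inner{X}{\dot X}\,\dd s \\
&= \tfrac12\Big(\norm{H}{X(t_{i+1})}^2 - \norm{H}{X(t_i)}^2\Big),
\end{align*}
the final equality being $\inner{X}{\dot X} = \frac12\frac{\dd}{\dd s}\norm{H}{X}^2$ exactly as in Lemma~\ref{lemma:equiv_norm}.

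Summing over the intervals then completes the argument. Because $X\in\cX_{k,q+1}\subset\cX\subset\sC([0,T];H)$ is continuous at the nodes, the boundary values $X(t_i)$ agree across adjacent intervals, so the telescoping collapses to $\norm{H}{X(t)}^2 - \norm{H}{X(0)}^2$; this cancels the leading $\norm{H}{X(t)}^2$ in $\nnnorm{\cX_{k,q+1}}{\cdot}$ and reinstates $\norm{H}{X(0)}^2$, leaving precisely $\aabs{\cX_{k,q+1}}{X}^2$. I expect the only delicate point to be the cross-term identity: it is the combination of self-adjointness of $\Pi^{(q)}_i$ with the fact that $\dot X$ is fixed by $\Pi^{(q)}_i$ that permits replacing $\Pi^{(q)}_iX$ by $X$ under the integral, and this is exactly what makes the telescoping — and hence the identification of the two norms — go through.
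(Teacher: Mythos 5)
Your proof is correct and follows essentially the same route as the paper's: expand the square on each interval, reduce the cross term to $\int_{I_i}\inner{X}{\dot X}\,\dd s$, and telescope using continuity at the nodes. The only difference is that you spell out the justification of the cross-term identity (self-adjointness of $\Pi^{(q)}_i$ combined with $\Pi^{(q)}_i\dot X=\dot X$ since $\dot X|_{I_i}\in\bP^q[t]\otimes\dot H^1$), which the paper asserts in one line without elaboration.
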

\begin{proof}
Similarly to the proof of Lemma~\ref{lemma:equiv_norm}, we have 
\begin{align*}
\nnorm{\cX_{k,q+1}}{X}
&= \norm{H}{X(t)}^2 + \sum_{i=1}^{N-1}\int_{I_i} 
\Big(\norm{H}{A^{\frac12} \Pi_i^{(q)}X}^2 
+\norm{H}{A^{-\frac12}\dot{X}}^2 \\
&\qquad \qquad \qquad \qquad \qquad -  2\tdual{\Pi_i^{(q)}X}{\dot{X}}
\Big)\,\dd s\\
&= \norm{H}{X(t)}^2 + \sum_{i=1}^{N-1}\int_{I_i} 
\Big(\norm{H}{A^{\frac12} \Pi_i^{(q)}X}^2 
+\norm{H}{A^{-\frac12}\dot{X}}^2 \\
&\qquad \qquad \qquad \qquad \qquad - \norm{H}{X(t_{i+1})}^2 +  
\norm{H}{X(t_i)}^2
\Big)\,\dd s\\
&= \norm{H}{X(0)}^2 + \sum_{i=0}^{N-1}
\int_{I_i}\Big(\norm{H}{A^{\frac12} \Pi_i^{(q)}X}^2 
+\norm{H}{A^{-\frac12}\dot{X}}^2 \Big)\,\dd s =
  \aabs{\cX_{k,q+1}}{x}^2, 
\end{align*}
since $\int_{I_i}\tdual{\Pi_i^{(q)}X}{\dot{X}}\,\dd s=
\int_{I_i}\tdual{X}{\dot{X}}\,\dd s$.  This is the desired result.
\end{proof}


\begin{theorem}\label{thm:bnbthm_sd}
The bilinear form \eqref{eq:bilinear_form} satisfies the following:
\begin{align}
&C_B:=\sup_{0\neq Y \in \cY_{k,q} \times H}\sup_{0\neq X \in \cX_{k,q+1}} 
\frac{\sB^*(Y,X)}{\aabs{\cY_H}{Y} \aabs{\cX_{k,q+1}}{X}  } = 1, 
\label{eq:bdd_sd}\\
&c_B:=\inf_{0\neq Y \in \cY_{k,q} \times H} \sup_{0\neq X \in \cX_{k,q+1}} 
\frac{\sB^*(Y,X)}{\aabs{\cY_H}{Y} \aabs{\cX_{k,q+1}}{X} } = 
1.\label{eq:infsup_sd}
\end{align}
\end{theorem}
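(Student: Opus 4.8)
The plan is to mirror the continuous argument of Theorem~\ref{thm:bnbthm} almost verbatim, with the new norm equivalence of Lemma~\ref{lemma:equiv_norm_sd} replacing that of Lemma~\ref{lemma:equiv_norm}. The only genuinely new ingredient is that the clever test-function choice used in the continuous case, namely $y_x=(x-A^{-1}\dot x, x(t))$, does not in general belong to the semidiscrete trial space $\cY_{k,q}\times H$, since $A^{-1}\dot X$ need not be a polynomial of degree $q$ in time on each $I_i$. This is precisely why the projection $\Pi^{(q)}$ appears in the semidiscrete norm. So the heart of the proof is to replace $x-A^{-1}\dot X$ with its projected analogue and verify that the cross terms still telescope correctly.

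For the boundedness bound $C_B\le 1$, I would first note that on the trial space the integrand pairing $\tdual{Y_1(s)}{-\dot X(s)+AX(s)}$ involves $Y_1\in\cY_{k,q}$, which is piecewise polynomial of degree $q$. Hence on each $I_i$ one may freely insert the $L^2$-in-time projection $\Pi^{(q)}_i$ against $X$, since $\langle Y_1,\dot X\rangle=\langle Y_1,\Pi^{(q)}_i\dot X\rangle$ and similarly for the $A$-term by self-adjointness of $\Pi^{(q)}_i$; this lets me bound the bilinear form by $\aabs{\cY_H}{Y}\,\nnnorm{\cX_{k,q+1}}{X}$ via Cauchy--Schwarz, followed by Lemma~\ref{lemma:equiv_norm_sd} to identify $\nnnorm{\cX_{k,q+1}}{X}=\aabs{\cX_{k,q+1}}{X}$. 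Here I must be careful that the boundary term $\inner{Y_2}{X(t)}$ pairs against the endpoint value $X(t)=X(T)$, matching the $\norm{H}{X(t)}^2$ appearing in $\nnnorm{\cX_{k,q+1}}{\cdot}$.

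For the inf-sup bound $c_B\ge 1$ I would use the second variant of \eqref{eq:equivalent_to_bnb1-2} and, given $X\in\cX_{k,q+1}$, construct an explicit $Y_X\in\cY_{k,q}\times H$. The natural choice is
\begin{align*}
Y_X=\big(\Pi^{(q)}X-A^{-1}\Pi^{(q)}\dot X,\,X(t)\big),
\end{align*}
which now lies in the semidiscrete trial space because $\Pi^{(q)}X$ and $\Pi^{(q)}\dot X$ are piecewise polynomials of degree $q$ in time. A short computation should give $\sB^*(Y_X,X)=\sum_i\int_{I_i}\norm{H}{A^{\frac12}\Pi^{(q)}_iX-A^{-\frac12}\dot X}^2\,\dd s+\norm{H}{X(t)}^2=\nnorm{\cX_{k,q+1}}{X}$, and likewise $\aabs{\cY_H}{Y_X}^2$ equals the same quantity, so that $\sB^*(Y_X,X)=\aabs{\cY_H}{Y_X}\aabs{\cX_{k,q+1}}{X}$ after invoking Lemma~\ref{lemma:equiv_norm_sd}. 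Combining with $c_B\le C_B$ forces both constants to equal $1$.

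The step I expect to require the most care is verifying that the cross terms produced by $\sB^*(Y_X,X)$ reduce cleanly to the squared differences in the norm. The subtlety is that $\dot X$ is not projected in the argument of $\sB^*$, yet the projection $\Pi^{(q)}$ multiplying it in $Y_X$ must be allowed to ``move'' onto $\dot X$; this is exactly the identity $\int_{I_i}\tdual{\Pi^{(q)}_iX}{\dot X}\,\dd s=\int_{I_i}\tdual{X}{\dot X}\,\dd s$ exploited in the proof of Lemma~\ref{lemma:equiv_norm_sd}, and it is what makes the telescoping of boundary terms across intervals collapse to the single endpoint contribution $\norm{H}{X(t)}^2$ rather than accumulating interior jumps. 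Once this interchange is justified, both estimates follow exactly as in the continuous case.
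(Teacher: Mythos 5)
Your proof is correct and takes essentially the same route as the paper: insert $\Pi^{(q)}_i$ into the $A$-term by self-adjointness of the projection to get boundedness, then verify the inf-sup condition with a projected test function and Lemma~\ref{lemma:equiv_norm_sd}. One caveat: your motivation is backwards --- since $X\big|_{I_i}\in\bP^{q+1}[t]\otimes\dot{H}^1$, the derivative $\dot{X}\big|_{I_i}$ is already a polynomial of degree $q$ in time, so $A^{-1}\dot{X}$ already lies in $\cY_{k,q}$ and $\Pi^{(q)}\dot{X}=\dot{X}$; what fails to lie in the trial space is the first term $X$ itself (degree $q+1$), which is why only it needs projecting. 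Your $Y_X=\big(\Pi^{(q)}X-A^{-1}\Pi^{(q)}\dot{X},\,X(t)\big)$ therefore coincides with the paper's choice $\big(\Pi^{(q)}X-A^{-1}\dot{X},\,X(t)\big)$, and the telescoping computation you describe goes through exactly as in the paper.
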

\begin{proof}
We first notice that, on each $I_i$,
\begin{align*}
\int_{I_i}{ \tdual{Y(s)}{-\dot{X}(s) + A^* X(s)} }\,\dd s = \int_{I_i}{ 
\tdual{Y(s)}{-\dot{X}(s) + A^* \Pi^{(q)}_i X(s)} }\,\dd s,
\end{align*}
so that we can use H\"older's inequality as in the proof of 
Theorem~\ref{thm:bnbthm} 
and obtain \eqref{eq:bdd_sd}. The proof of \eqref{eq:infsup_sd} follows by 
choosing, for $X \in \cX_{k,q+1}$,
\begin{align*}
Y_X = \Big( \Pi^{(q)}X - A^{-1}\dot{X}, X(t) \Big),
\end{align*}
and proceeding in the same way as in the continuous case.
\end{proof}
Since we are in a semidiscrete case, the conditions \eqref{BNB1A} and 
\eqref{BNB2A} are not equivalent, and one should prove also the latter. We 
refrain from doing so and refer to \cite[Proposition 4.2]{Fra}, where a complete 
proof for the case $q=0$ can be found. The case of $q>0$ follows similarly.
In order to have solvability of \eqref{eq:semidiscrete_form_q} it now 
only remains to bound $\sF$ with respect to the norm 
$\aabs{\cX_{k,q+1}}{\cdot}. $


\begin{lemma}\label{lemma:RHS_semidiscrete}
If $f\in L^2((0,T);\dot{H}^1)$ and $u_0 \in H$, then we have for $X \in 
\cX_{k,q+1}$ the 
following inequality:
\begin{align*}
\Big| \sF(X) \Big| \leq \Big[ \sum_{i=0}^{N-1}\Big(\int_{I_i} 
{\norm{\dot{H}^{-1}}{f(s)}^2 }\,\dd s + 
k_i^2 \int_{I_i}{ \norm{\dot{H}^1}{f(s)}^2\,\dd 
s \Big) + \norm{H}{u_0}^2} \Big]^{\frac12} 
\aabs{\cX_{k,q+1}}{X}. 
\end{align*}
\end{lemma}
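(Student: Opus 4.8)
The goal is to bound the load functional
\begin{equation*}
\sF(X) = \int_0^T \tdual{f}{X}\,\dd s + \inner{u_0}{X(0)}
\end{equation*}
by the semidiscrete norm $\aabs{\cX_{k,q+1}}{X}$. The obstacle is that $\aabs{\cX_{k,q+1}}{\cdot}$ controls $X$ only through the \emph{projected} quantity $\Pi^{(q)}_i X$ in the $\dot H^1$-part, whereas $\sF$ tests $f$ against the full $X$. The plan is therefore to split $X = \Pi^{(q)}_i X + (X - \Pi^{(q)}_i X)$ on each interval and handle the two pieces separately: the projected part pairs directly against the norm, while the remainder $X - \Pi^{(q)}_i X$ must be controlled by $\dot X$ through a Poincar\'e-type estimate, which is exactly where the factor $k_i$ (and hence the $k_i^2$ in the final bound) will appear.

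First I would treat the initial-data term by duality on $H$, giving $|\inner{u_0}{X(0)}| \le \norm{H}{u_0}\norm{H}{X(0)}$, with $\norm{H}{X(0)}^2$ already a summand of $\aabs{\cX_{k,q+1}}{X}^2$. For the integral term, on each $I_i$ I would write
\begin{equation*}
\int_{I_i}\tdual{f}{X}\,\dd s = \int_{I_i}\tdual{f}{\Pi^{(q)}_i X}\,\dd s + \int_{I_i}\tdual{f}{X - \Pi^{(q)}_i X}\,\dd s.
\end{equation*}
The first piece I would bound using the $\dot H^{-1}$/$\dot H^1$ duality, $|\tdual{f}{\Pi^{(q)}_i X}| \le \norm{\dot H^{-1}}{f}\norm{\dot H^1}{\Pi^{(q)}_i X}$, producing the $\int_{I_i}\norm{\dot H^{-1}}{f}^2\,\dd s$ contribution paired with the $\dot H^1$-part of the norm. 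For the second piece I would use orthogonality of $\Pi^{(q)}_i$ to rewrite $\tdual{f}{X-\Pi^{(q)}_iX} = \tdual{f - \Pi^{(q)}_i f}{X - \Pi^{(q)}_i X}$, and then invoke a standard polynomial approximation (Poincar\'e) estimate on $I_i$: since $X - \Pi^{(q)}_i X$ has vanishing $L^2(I_i)$-projection onto $\bP^q$, its $L^2(I_i;\dot H^{-1})$-norm is controlled by $k_i$ times $\norm{L^2(I_i;\dot H^{-1})}{\dot X}$, which yields the factor $k_i$ and the appearance of $\norm{\dot H^1}{f}$ after an integration estimate on $f - \Pi_i^{(q)}f$.

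The main obstacle is the remainder term, and I expect the careful step to be the simultaneous use of approximation estimates on both factors: one needs $\norm{L^2(I_i;\dot H^{-1})}{X-\Pi_i^{(q)}X} \lesssim k_i \norm{L^2(I_i;\dot H^{-1})}{\dot X}$ and $\norm{L^2(I_i;\dot H^1)}{f - \Pi_i^{(q)}f} \lesssim k_i \norm{L^2(I_i;\dot H^1)}{\dot f}$ type bounds, combining to a factor $k_i^2$; but since the stated bound only features $k_i^2\int_{I_i}\norm{\dot H^1}{f}^2$ (not $\norm{\dot H^1}{\dot f}$), the cleaner route is to put \emph{both} powers of $k_i$ onto the approximation of $X$ while estimating $f$ in $\dot H^1$ directly. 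Concretely, $|\int_{I_i}\tdual{f}{X-\Pi_i^{(q)}X}| \le \norm{L^2(I_i;\dot H^1)}{f}\,\norm{L^2(I_i;\dot H^{-1})}{X-\Pi_i^{(q)}X} \le k_i\norm{L^2(I_i;\dot H^1)}{f}\,\norm{L^2(I_i;\dot H^{-1})}{\dot X}$, whose square $k_i^2\int_{I_i}\norm{\dot H^1}{f}^2\,\dd s$ pairs with the $\norm{\dot H^{-1}}{\dot X}^2$ summand of the norm. Assembling all three contributions, summing over $i$, and applying the Cauchy--Schwarz inequality for sums gives the product structure with $\aabs{\cX_{k,q+1}}{X}$ as the second factor, completing the proof.
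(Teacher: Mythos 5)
Your proof is correct and follows essentially the same route as the paper: split $X = \Pi^{(q)}_i X + (X - \Pi^{(q)}_i X)$ on each $I_i$, bound the projected part by $\dot{H}^{-1}/\dot{H}^{1}$ duality, bound the remainder by $\norm{L^2(I_i;\dot{H}^{1})}{f}\,k_i\norm{L^2(I_i;\dot{H}^{-1})}{\dot{X}}$ via the Poincar\'e-type estimate for the mean-zero remainder, and finish with Cauchy--Schwarz for sums against $\aabs{\cX_{k,q+1}}{X}$. Your rejected detour through $f-\Pi^{(q)}_i f$ is indeed unnecessary, and the only imprecision is the phrase ``both powers of $k_i$'': the estimate carries a single factor $k_i$, which becomes $k_i^2$ only upon squaring in the final Cauchy--Schwarz step, exactly as in the paper.
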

\begin{proof}
We use the fact that, for $X \in \cX_{k,q+1}$ and for every subinterval $I_i$, 
we have
\begin{align}\label{eq:interpolation_err_sd}
\norm{L^2(I_i;\dot{H}^{-1})}{X - \Pi^{(q)}_i X}^2 \leq 
\norm{L^2(I_i;\dot{H}^{-1})}{X - \Pi^{(0)}_i X}^2 \leq
k_i^2\norm{L^2(I_i;\dot{H}^{-1})}{\dot{X}}^2 .
\end{align}
By adding and subtracting $\Pi^{(q)} X$, we have
\begin{align*}
\sF(X) &= \sum_{i=0}^{N-1}\Big( \int_{I_i}{ \inner{f(s)}{\Pi^{(q)}_i X(s)} 
}\,\dd s  + \int_{I_i}{ \inner{f(s)}{X(s) - \Pi^{(q)}_i X(s)} }\,\dd s \Big)\\
&\quad + 
\inner{u_0}{X(0)},
\end{align*}
so that 
\begin{align*}
\Big| \sF(X) \Big| 
& \leq 
\sum_{i=0}^{N-1}  \Big(  \norm{L^2(I_i;\dot{H}^{-1})}{f}
 \norm{L^2(I_i;\dot{H}^{1})}{\Pi_i^{(q)}X} 
+  \norm{L^2(I_i;\dot{H}^{1})}{f}
  k_i\norm{L^2(I_i;\dot{H}^{-1})}{\dot{X}} \Big)
\\ & \quad 
+ \norm{H}{u_0}\norm{H}{X(0)}
\\ &
\le 
\Big[ \sum_{i=0}^{N-1} \Big( \norm{L^2(I_i;\dot{H}^{-1})}{f}^2 
+k_i^2 \norm{L^2(I_i;\dot{H}^{1})}{f}^2\Big)
+ \norm{H}{u_0}^2 
\Big]^{\frac12} \aabs{\cX_{k,q+1}}{X},
\end{align*}
which proves the 
claim.
\end{proof}
The previous lemma shows in particular that 
\begin{align*}
\norm{(\cX_{k,q+1}, \aabs{\cX_{k,q+1}}{\cdot})^*}{\sF} \leq
\Big(
\norm{L^2((0,T);\dot{H}^{-1})}{f}^2 +k^2 \norm{L^2((0,T);\dot{H}^1)}{f}^2 + 
\norm{H}{u_0}^2\Big)^{\frac12}, 
\end{align*}
so that the next theorem follows:
\begin{theorem}\label{thm:existence_uniqueness_semi}
If $f\in L^2((0,T);\dot{H}^1)$ and $u_0 \in H$, there exists a unique solution 
$\hat{U} \in \cY_{k,q}\times H$ to the 
semidiscrete problem, and its norm is such that
\begin{align*}
\aabs{\cY_H}{\hat{U}} \leq \Big(
\norm{L^2((0,T);\dot{H}^{-1})}{f}^2 +k^2 
\norm{L^2((0,T);\dot{H}^1)}{f}^2 + 
\norm{H}{u_0}^2\Big)^{\frac12}.
\end{align*}
\end{theorem}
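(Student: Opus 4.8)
The plan is to read the statement off directly from the BNB theorem (Theorem~\ref{thm:bnbthm_abstract}), applied to $\sB^*$ on the semidiscrete trial space $\cY_{k,q}\times H$ (with norm $\aabs{\cY_H}{\cdot}$) and test space $\cX_{k,q+1}$ (with norm $\aabs{\cX_{k,q+1}}{\cdot}$). Two of the three hypotheses are already in hand: Theorem~\ref{thm:bnbthm_sd} supplies the boundedness condition \eqref{BDD} through \eqref{eq:bdd_sd} and the inf-sup condition \eqref{BNB1A} through \eqref{eq:infsup_sd}, both with the sharp value $C_B=c_B=1$.

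The only delicate point, and the one I would flag as the genuine obstacle, is that, unlike in the continuous case of Theorem~\ref{thm:bnbthm}, the equivalence \eqref{eq:equivalent_to_bnb1-2} that permits swapping the infimum and the supremum is no longer available: the test space $\cX_{k,q+1}$ is a genuinely infinite-dimensional, strict subspace (full in the spatial variable, discretized only in time), so \eqref{BNB1A} and \eqref{BNB2A} are not automatically equivalent. Hence \eqref{BNB2A} must be verified independently; I would do this exactly as announced, by quoting \cite[Proposition~4.2]{Fra} for $q=0$ and observing that the argument there transfers verbatim to $q>0$, since only the temporal polynomial degree changes.

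With \eqref{BDD}, \eqref{BNB1A} and \eqref{BNB2A} established, Theorem~\ref{thm:bnbthm_abstract} guarantees that the operator associated with $\sB^*$ is boundedly invertible, which yields existence and uniqueness of $\hat U\in\cY_{k,q}\times H$. For the quantitative bound I would invoke the consequence of bounded invertibility recorded after Theorem~\ref{thm:bnbthm}, namely $\aabs{\cY_H}{\hat U}\le c_B^{-1}\norm{(\cX_{k,q+1},\aabs{\cX_{k,q+1}}{\cdot})^*}{\sF}$, then set $c_B=1$ and insert the bound on $\sF$ furnished by Lemma~\ref{lemma:RHS_semidiscrete} (equivalently, the displayed estimate immediately preceding the theorem). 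This produces the asserted inequality. It is worth noting \emph{why} the hypothesis is $f\in L^2((0,T);\dot{H}^1)$ rather than the weaker $\dot{H}^{-1}$ regularity that sufficed in the continuous case of \eqref{eq:norm_rhs}: the semidiscrete norm $\aabs{\cX_{k,q+1}}{\cdot}$ measures $\Pi^{(q)}X$ in place of $X$, so controlling $\sF(X)$ requires estimating the interpolation defect $X-\Pi^{(q)}X$, and through \eqref{eq:interpolation_err_sd} this is precisely what generates the term $k^2\norm{L^2((0,T);\dot{H}^1)}{f}^2$.
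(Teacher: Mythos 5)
Your proposal is correct and takes essentially the same route as the paper: the theorem there is presented as an immediate consequence of Theorem~\ref{thm:bnbthm_sd} (with $C_B=c_B=1$), condition \eqref{BNB2A} delegated to \cite[Proposition 4.2]{Fra} exactly as you do, and the dual-norm bound on $\sF$ from Lemma~\ref{lemma:RHS_semidiscrete}, combined via the BNB theorem to give $\aabs{\cY_H}{\hat U}\le c_B^{-1}\norm{(\cX_{k,q+1},\aabs{\cX_{k,q+1}}{\cdot})^*}{\sF}$. Your closing observation about why $f\in L^2((0,T);\dot{H}^1)$ is required (the interpolation defect $X-\Pi^{(q)}X$ via \eqref{eq:interpolation_err_sd}) accurately reflects the mechanism in the paper's Lemma~\ref{lemma:RHS_semidiscrete}.
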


\subsection{A priori error estimate}
In the proof of Theorem~\ref{thm:quasi_opt_unsharp} we relied on the
boundedness of $\sB^*$ with respect to $\aabs{\cX}{\cdot}$ and
$\aabs{\cY_H}{\cdot}$, together with the norm equivalence
\eqref{eq:equivalence_norm_cX} between $\aabs{\cX}{\cdot}$ and
$\aabs{\cX_k}{\cdot}$, to show its boundedness with respect to
$\aabs{\cX_k}{\cdot}$ and $\aabs{\cY_H}{\cdot}$. This does not work
here due to the fact that the constant $c_{\rm S}$, that would appear,
is not finite in the semidiscrete case. We solve this problem by
bounding the bilinear form with respect to
$\aabs{\cX_{k,q+1}}{\cdot} $ and a stronger norm on $\cY$.


\begin{lemma}\label{lemma:alternative_boundedness}
The following boundedness estimate holds for any $X\in\cX_{k,q+1}$ and $y \in 
L^2((0,t_n);\dot{H}^3) \times H$ such that $y_2=0 $:
\begin{align*}
 |\sB^*(y,X)| \leq C \Big[ \sum_{i=0}^{N-1} \Big( \int_{I_i}{ \norm{ 
\dot{H}^{1} 
}{y}^2}\,\dd s + 
k_i^2\int_{I_i}{ 
\norm{\dot{H}^{3}}{y}^2 }\,\dd s\Big) \Big]^{\frac12}\aabs{\cX_{k,q+1}}{X}.
\end{align*}
\end{lemma}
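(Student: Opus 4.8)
The plan is to exploit that $y_2=0$ collapses the boundary term in \eqref{eq:bilinear_form}, reducing the form to $\sB^*(y,X)=\sum_{i=0}^{N-1}\int_{I_i}\tdual{y_1}{-\dot X+AX}\,\dd s$. I would split each local integrand into its $-\dot X$ part and its $AX$ part and estimate the pieces against the two ingredients of $\aabs{\cX_{k,q+1}}{\cdot}$, namely $\norm{L^2(I_i;\dot{H}^{-1})}{\dot X}$ and $\norm{L^2(I_i;\dot{H}^1)}{\Pi^{(q)}_iX}$, as recorded in \eqref{eq:discretenorms}.

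First I would treat $\int_{I_i}\tdual{y_1}{-\dot X}\,\dd s$ directly, using the $\dot{H}^1$--$\dot{H}^{-1}$ duality followed by Cauchy--Schwarz in time to get the bound $\norm{L^2(I_i;\dot{H}^1)}{y_1}\,\norm{L^2(I_i;\dot{H}^{-1})}{\dot X}$. For the term $\int_{I_i}\tdual{y_1}{AX}\,\dd s=\int_{I_i}\inner{\Ab{1}y_1}{\Ab{1}X}\,\dd s$ I would insert the projection and separate the projected contribution, which Cauchy--Schwarz controls by $\norm{L^2(I_i;\dot{H}^1)}{y_1}\,\norm{L^2(I_i;\dot{H}^1)}{\Pi^{(q)}_iX}$.

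The remaining piece $\int_{I_i}\inner{\Ab{1}y_1}{\Ab{1}(X-\Pi^{(q)}_iX)}\,\dd s$ is where the main obstacle lies: because $y_1$ is a general $\dot{H}^3$-valued function and not piecewise polynomial, the orthogonality argument of Theorem~\ref{thm:bnbthm_sd} that allowed one to replace $X$ by $\Pi^{(q)}_iX$ in the $A$-term is no longer available, so this remainder cannot be discarded and must be estimated. The key device is the spectral identity $\inner{\Ab{1}u}{\Ab{1}v}=\inner{\Ab{3}u}{\Amb{1}v}$, which shifts two half-powers of $A$ off $X$ and onto $y_1$; Cauchy--Schwarz then yields $\norm{L^2(I_i;\dot{H}^3)}{y_1}\,\norm{L^2(I_i;\dot{H}^{-1})}{X-\Pi^{(q)}_iX}$, and the interpolation estimate \eqref{eq:interpolation_err_sd} converts the last factor into $k_i\,\norm{L^2(I_i;\dot{H}^{-1})}{\dot X}$. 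This is precisely the step that demands the stronger $\dot{H}^3$ regularity on $y$ and produces the weight $k_i$ appearing in the claim.

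Finally I would collect the three local bounds and apply the discrete Cauchy--Schwarz inequality over $i$, absorbing the finite number of terms into the constant $C$. The $X$-factors are in each case either $\norm{L^2(I_i;\dot{H}^{-1})}{\dot X}$ or $\norm{L^2(I_i;\dot{H}^1)}{\Pi^{(q)}_iX}$, so their summed squares are dominated by $\aabs{\cX_{k,q+1}}{X}^2$, while the $y$-factors assemble into $\sum_{i=0}^{N-1}\big(\norm{L^2(I_i;\dot{H}^1)}{y_1}^2+k_i^2\norm{L^2(I_i;\dot{H}^3)}{y_1}^2\big)$. Using $y_2=0$, so that $\norm{\dot{H}^1}{y}=\norm{\dot{H}^1}{y_1}$ and $\norm{\dot{H}^3}{y}=\norm{\dot{H}^3}{y_1}$, this delivers exactly the stated estimate.
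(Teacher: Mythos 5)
Your proposal is correct and takes essentially the same route as the paper's own proof: the boundary term drops since $y_2=0$, the $\dot X$-part is bounded directly by $\dot{H}^{1}$--$\dot{H}^{-1}$ duality, and the $A$-part is split with $\Pi^{(q)}_i$, the remainder being controlled by shifting the extra powers of $A$ onto $y$ (giving the $\dot H^3$-norm) and then applying the interpolation estimate \eqref{eq:interpolation_err_sd} to produce the factor $k_i\,\norm{L^2(I_i;\dot{H}^{-1})}{\dot X}$. The paper's proof is simply a terser version of this same argument, concluding by summing over the subintervals exactly as you do.
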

\begin{proof}
The term we need to modify in order to achieve the 
$\aabs{\cX_{k,q+1}}{\cdot}$-norm, 
is the one not involving the time derivative. For this term we have 
\begin{align*}
\int_{I_i}{ \inner{y}{A^* X} }\,\dd s = \int_{I_i}{ \inner{Ay}{ 
\Pi^{(q)}_i X}}\,\dd s + \int_{I_i}{ 
\inner{Ay}{X - \Pi^{(q)}_i X}}\,\dd s .
\end{align*}
If we now take norms and use \eqref{eq:interpolation_err_sd}, we get
\begin{align*}
\Big|\int_{I_i}{ \inner{y}{A^* X} }\,\dd s \Big| 
\leq 
\norm{ L^2(I_i;\dot{H}^{1}) }{y}\norm{L^2(I_i;\dot{H}^{1} )}{\Pi^{(q)}_i X}
+ k_i
\norm{ L^2(I_i;\dot{H}^{3})}{y}\norm{ L^2(I_i;\dot{H}^{-1}) }{\dot{X}} .
\end{align*}
This proves the claim.
\end{proof}
We can now prove a substitute for a quasi-optimality theorem for the
semidiscrete case.


\begin{theorem}\label{thm:quasi_opt_mod}
If $u$ and $\hat{U}$ are solutions to 
\eqref{eq:secondspacetime} and \eqref{eq:semidiscrete_form_q}, respectively, 
then the 
error 
$u-\hat{U}$ satisfies the following bound:
\begin{equation*}
\aabs{\cY_H}{u- \hat{U} } \leq C \Big[ \sum_{i=0}^{N-1} \Big( \int_{I_i}{ 
\norm{ 
\dot{H}^{1} 
}{u_1 - Y_1}^2}\,\dd s + 
k_i^2\int_{I_i}{ 
\norm{\dot{H}^{3}}{u_1-Y_1}^2 }\,\dd s\Big) \Big]^{\frac12},
\end{equation*}
for any $Y_1 \in \cY_{k,q,3} := \{ Y\in \cY \colon 
Y\big|_{I_i} \in 
\bP^q[t]\otimes \dot{H}^{3} \}$.
\end{theorem}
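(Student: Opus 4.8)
The plan is to mimic the structure of the proof of Theorem~\ref{thm:quasi_opt_unsharp}, but replacing the symmetric boundedness–plus–norm-equivalence argument (which fails because $c_{\rm S}$ is infinite in the semidiscrete case) with the asymmetric boundedness estimate of Lemma~\ref{lemma:alternative_boundedness}. As before, I would work on $(0,t_n)$ for arbitrary $t_n$ and suppress $t_n$ in the notation. Let $R\colon\cY_H\to\cY_{k,q}\times H$ denote the semidiscrete Ritz projection defined by $\sB^*(R\phi,X)=\sB^*(\phi,X)$ for all $X\in\cX_{k,q+1}$; this is well-defined and bounded by Theorem~\ref{thm:bnbthm_sd}, which gives $c_B=C_B=1$ for $\sB^*$ on the pair $(\cY_{k,q}\times H,\,\cX_{k,q+1})$ with respect to $\aabs{\cY_H}{\cdot}$ and $\aabs{\cX_{k,q+1}}{\cdot}$. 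Since $\hat U=Ru$, I would write, for any $Y=(Y_1,Y_2)\in\cY_{k,q}\times H$,
\begin{equation*}
\aabs{\cY_H}{u-\hat U}=\aabs{\cY_H}{(I-R)u}=\aabs{\cY_H}{(I-R)(u-Y)}\le \norm{\sL(\cY_H)}{I-R}\,\aabs{\cY_H}{u-Y},
\end{equation*}
using Galerkin orthogonality $(I-R)Y=0$ for $Y$ in the discrete space, together with $\norm{\sL(\cY_H)}{I-R}=\norm{\sL(\cY_H)}{R}$ from the idempotency argument (\cite{XuZik}) already invoked in Theorem~\ref{thm:quasi_opt_unsharp}.

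The key difference is how I would bound $\aabs{\cY_H}{u-Y}$, since I cannot simply invoke boundedness of $\sB^*$ in the $\aabs{\cX}{\cdot}$-norm and then pay a factor $c_{\rm S}$. Instead I would use the inf-sup condition \eqref{eq:infsup_sd} to write, for $\phi:=u-Y$,
\begin{equation*}
\aabs{\cY_H}{u-Y}\le \frac{1}{c_B}\sup_{0\neq X\in\cX_{k,q+1}}\frac{\sB^*(u-Y,X)}{\aabs{\cX_{k,q+1}}{X}},
\end{equation*}
and then estimate the numerator by Lemma~\ref{lemma:alternative_boundedness}. To do so I must choose $Y$ so that its second component matches, i.e. take $Y_2=u_2(t)$ (which is legitimate since $u_2(t)\in H$ and the discrete test space carries the exact second component via $X(t)$); then $\phi_2=0$, exactly the hypothesis $y_2=0$ required by Lemma~\ref{lemma:alternative_boundedness}. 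Applying that lemma to $y=u-Y$ (noting $u_1-Y_2$ contributes nothing once $\phi_2=0$, so only the first component $u_1-Y_1$ appears) yields precisely
\begin{equation*}
\sB^*(u-Y,X)\le C\Big[\sum_{i=0}^{N-1}\Big(\int_{I_i}\norm{\dot H^1}{u_1-Y_1}^2\,\dd s+k_i^2\int_{I_i}\norm{\dot H^3}{u_1-Y_1}^2\,\dd s\Big)\Big]^{1/2}\aabs{\cX_{k,q+1}}{X},
\end{equation*}
and dividing by $\aabs{\cX_{k,q+1}}{X}$ and taking the supremum gives the stated bound, with $Y_1$ ranging over $\cY_{k,q,3}$ as required for the $\dot H^3$ norm to be finite.

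The main obstacle I anticipate is the bookkeeping around the second component: the combined error $\aabs{\cY_H}{u-\hat U}$ includes both $\aabs{}{u_1-\hat U_1}$ and $\norm{H}{u_2(t)-\hat U_2}$, yet the right-hand side of the claimed estimate only features $u_1-Y_1$. The resolution is that Lemma~\ref{lemma:alternative_boundedness} is stated precisely for integrands with $y_2=0$, so I must verify that choosing $Y_2=u_2(t)$ is both admissible in the discrete trial space and compatible with the Ritz-projection identity, thereby eliminating the $H$-norm contribution from the bound on $\sB^*(\phi,X)$ while leaving both components controlled on the left via $\norm{\sL(\cY_H)}{R}$. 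A secondary point to check is that $\norm{\sL(\cY_H)}{R}$ is bounded by an absolute constant; since Theorem~\ref{thm:bnbthm_sd} gives $c_B=C_B=1$, the argument of Theorem~\ref{thm:quasi_opt_unsharp} shows $\norm{\sL(\cY_H)}{R}\le C_B/c_B=1$, so no CFL-type condition is needed — which is exactly the advertised advantage of the semidiscrete analysis.
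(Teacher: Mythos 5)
Your proposal contains the right ingredients (Galerkin orthogonality, the choice $Y_2=u_2$, and Lemma~\ref{lemma:alternative_boundedness}), but the scaffolding into which you insert them fails at two points, both for the same underlying reason. First, your claim $\norm{\sL(\cY_H)}{R}\leq C_B/c_B=1$ is not available: the boundedness constant $C_B=1$ of Theorem~\ref{thm:bnbthm_sd} holds only when the \emph{first} argument of $\sB^*$ lies in the discrete space $\cY_{k,q}\times H$, since its proof rests on the identity $\int_{I_i}{\tdual{Y}{-\dot{X}+A^*X}}\,\dd s=\int_{I_i}{\tdual{Y}{-\dot{X}+A^*\Pi^{(q)}_iX}}\,\dd s$, valid only for $Y$ piecewise polynomial in time. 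For a general $\phi\in\cY_H$, the functional $X\mapsto\sB^*(\phi,X)$ is \emph{not} bounded by $\aabs{\cY_H}{\phi}\,\aabs{\cX_{k,q+1}}{X}$ --- this is precisely the statement that $c_{\rm S}$ is infinite in the semidiscrete case --- so the Ritz projection $R$ is not a bounded operator on $(\cY_H,\aabs{\cY_H}{\cdot})$ and the Xu--Zikatanov identity $\norm{\sL(\cY_H)}{I-R}=\norm{\sL(\cY_H)}{R}$ cannot be invoked; $R$ is only bounded from a more regular subspace, carrying a stronger ($\dot{H}^3$-type) norm, into $\cY_H$. The paper states exactly this obstruction in the remark following its proof. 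Second, your inequality $\aabs{\cY_H}{u-Y}\leq c_B^{-1}\sup_{X}\sB^*(u-Y,X)/\aabs{\cX_{k,q+1}}{X}$ misapplies the discrete inf-sup \eqref{eq:infsup_sd}: the infimum there ranges only over $\cY_{k,q}\times H$, and $u-Y$ is not an element of that space. A lower inf-sup bound for arbitrary elements of $\cY_H$ tested against the finite-codimension-in-time space $\cX_{k,q+1}$ is false in general.

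The repair is to reassemble your ingredients as the paper does: start from the triangle inequality $\aabs{\cY_H}{u-\hat{U}}\leq\aabs{\cY_H}{u-Y}+\aabs{\cY_H}{\hat{U}-Y}$. The element $\hat{U}-Y$ \emph{is} discrete, so \eqref{eq:infsup_sd} legitimately gives $\aabs{\cY_H}{\hat{U}-Y}\leq\sup_{X\in\cX_{k,q+1}}\sB^*(\hat{U}-Y,X)/\aabs{\cX_{k,q+1}}{X}$; orthogonality $\sB^*(u-\hat{U},X)=0$ then converts the numerator into $\sB^*(u-Y,X)$, to which Lemma~\ref{lemma:alternative_boundedness} applies after choosing $Y_2=u_2$ so that the second component of $u-Y$ vanishes. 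The remaining term $\aabs{\cY_H}{u-Y}=\norm{L^2((0,t_n);\dot{H}^1)}{u_1-Y_1}$ is already dominated by the claimed right-hand side. In this arrangement the discrete inf-sup is only ever applied to discrete elements, no operator norm of $R$ on $\cY_H$ is needed, and --- as you correctly anticipated --- no CFL-type condition enters.
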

\begin{proof}
We notice that we have the orthogonality
\begin{equation*}
\sB^*(u-\hat{U},X)=0, \quad \forall X \in {\cX_{k,q+1}},
\end{equation*}
so that, for any $Y \in {\cY_{k,q}} \times H$,
\begin{align*}
\aabs{\cY_H}{u- \hat{U} } &\leq \aabs{\cY_H}{u-Y} + 
\aabs{\cY_H}{\hat{U}-Y}\\
&\leq \aabs{\cY_H}{u-Y} + \sup_{X\in
\cX_{k,q+1}}\frac{\sB^*(\hat{U}-Y,X)}{\aabs{\cX_{k,q+1}}{X}}\\
&= \aabs{\cY_H}{u-Y} +\sup_{X\in 
\cX_{k,q+1}}\frac{\sB^*(u-Y,X) +
\sB^*(\hat{U}-u,X) }{\aabs{\cX_{k,q+1}}{X}}\\
&= \aabs{\cY_H}{u-Y} + \sup_{X\in
\cX_{k,q+1}}\frac{\sB^*(u-Y,X)}{\aabs{\cX_{k,q+1}}{X}},
\end{align*}
where the first inequality comes from \eqref{eq:infsup_sd}, while the
last equality comes from orthogonality.  If we choose $Y$ such that
its second component is equal to $u_2$, which is possible in the
semidiscrete case, then we have $ Y_2 - u_2=0$, so that
Lemma~\ref{lemma:alternative_boundedness} applies, giving:
\begin{align*}
&\aabs{\cY_H}{u- \hat{U} } \leq C \Big[ \sum_{i=0}^{N-1} \Big( 
\int_{I_i}{ \norm{ 
\dot{H}^{1} 
}{u_1 - Y_1}^2}\,\dd s + 
k_i^2\int_{I_i}{ 
\norm{\dot{H}^{3}}{u_1-Y_1}^2 }\,\dd s\Big) \Big]^{\frac12},
\end{align*}
for any arbitrary $Y_1 \in \cY_{k,q,3} := \{ Y\in \cY \colon 
Y\big|_{I_i} \in 
\bP^q[t]\otimes \dot{H}^{3} \}$.
\end{proof}

Note that in this proof we cannot use $\norm{}{I-R} = \norm{}{R}$, as in the 
proof of Theorem~\ref{thm:quasi_opt_unsharp}, because we use different norms 
on $U$ and $u$ in $U=Ru$.  

\begin{remark}
It is worth noticing that everything said so far still holds when we shift 
spatial regularity and work with a solution $u \in 
L^2((0,T);\dot{H}^{\beta+1})$; it is easy to see that this leads 
to the following modified inequality:
\begin{equation*}
\begin{aligned}
&\norm{ L^2((0,T);\dot{H}^{\beta+1} ) \times \dot{H}^{\beta}  }{u- \hat{U} } \\
&\qquad \qquad \leq C \Big[ \sum_{i=0}^{N-1} \Big( \int_{I_i}{ \norm{ 
\dot{H}^{\beta+1}
}{u_1 - Y_1}^2}\,\dd s + 
k_i^2\int_{I_i}{ 
\norm{\dot{H}^{\beta +3}}{u_1-Y_1}^2 }\,\dd s\Big) \Big]^{\frac12},
\end{aligned}
\end{equation*}
for any $ Y_1$ in the space $ \cY_{k,q,\beta+3} := \{ Y\in \cY \colon 
Y\big|_{I_i} \in 
\bP^q[t]\otimes \dot{H}^{\beta+3} \}$,
\end{remark}

\subsection{Convergence of order $q+1$}
Now that we have an abstract error estimate for the semidiscrete case,
we can derive an analogue to Theorem~\ref{thm:error_quasi_optimal_expl}.

\begin{theorem}\label{thm:error_quasi_optimal_expl_sd}
  For sufficiently smooth data, the error in the
  semidiscrete scheme \eqref{eq:semidiscrete_form_q} satisfies the
  following inequality, for $\beta \geq 0$,
\begin{equation*}
 \begin{aligned}
&\norm{L^2((0,T);\dot{H}^{\beta +1})}{u_1 - \hat{U}_1} + 
\max_{i=1,\ldots,N}\norm{\dot{H}^{\beta}}{u_2(t_i) - \hat{U}^{(i)}_2} \\
&\qquad  \leq C  \Big[ \sum_{i=0}^{N-1} k_i^{2(q+1)} \Big(
\norm{L^2(I_i;\dot{H}^{\beta+ 3})}{u_1^{(q)}}^2 + 
\norm{L^2(I_i;\dot{H}^{\beta+1})}{u_1^{(q+1)}}^2 \Big) 
\Big]^{\frac12}.
\end{aligned}
\end{equation*}
\end{theorem}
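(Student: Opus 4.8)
The plan is to feed the semidiscrete quasi-optimality bound of Theorem~\ref{thm:quasi_opt_mod}, in the spatially shifted form recorded in the Remark following it, with a concrete choice of the free function $Y_1$, and then to insert classical polynomial approximation estimates in time. First I would take $Y_1 = \Pi^{(q)} u_1$, the piecewise $L^2$-projection of $u_1$ onto time-polynomials of degree at most $q$ with values in $\dot{H}^{\beta+3}$; under the smoothness assumptions this lies in $\cY_{k,q,\beta+3}$, and its second component is taken equal to $u_2$ exactly as in the proof of Theorem~\ref{thm:quasi_opt_mod}, so that the hypothesis enabling Lemma~\ref{lemma:alternative_boundedness} is satisfied. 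With this choice the shifted estimate reads
\begin{equation*}
\norm{L^2((0,T);\dot{H}^{\beta+1}) \times \dot{H}^{\beta}}{u - \hat{U}} \leq C \Big[ \sum_{i=0}^{N-1} \Big( \int_{I_i} \norm{\dot{H}^{\beta+1}}{u_1 - \Pi^{(q)}_i u_1}^2 \,\dd s + k_i^2 \int_{I_i} \norm{\dot{H}^{\beta+3}}{u_1 - \Pi^{(q)}_i u_1}^2 \,\dd s \Big) \Big]^{\frac12}.
\end{equation*}

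Next I would estimate the two interval contributions by the standard bound for the $L^2$-projection onto $\bP^q[t]$, namely
\begin{equation*}
\norm{L^2(I_i;\dot{H}^{\gamma})}{u_1 - \Pi^{(q)}_i u_1} \leq C k_i^{s} \norm{L^2(I_i;\dot{H}^{\gamma})}{u_1^{(s)}}, \quad 0 \leq s \leq q+1,
\end{equation*}
valid in any fixed spatial space $\dot{H}^{\gamma}$. The essential trick is to use two different orders. For the unweighted term I take $s = q+1$ and $\gamma = \beta+1$, which yields $k_i^{2(q+1)} \norm{L^2(I_i;\dot{H}^{\beta+1})}{u_1^{(q+1)}}^2$. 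For the term already carrying the weight $k_i^2$ I take the lower order $s = q$ and $\gamma = \beta+3$, which produces $k_i^{2}\cdot k_i^{2q} \norm{L^2(I_i;\dot{H}^{\beta+3})}{u_1^{(q)}}^2 = k_i^{2(q+1)} \norm{L^2(I_i;\dot{H}^{\beta+3})}{u_1^{(q)}}^2$. Both interval contributions thus share the common factor $k_i^{2(q+1)}$ and reassemble precisely into the right-hand side of the theorem.

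It remains to replace the single final-time value $u_2(T)$ controlled by the $\cY_H$-norm with the maximum over all temporal nodes. Here I would run the whole argument on $[0,t_n]$ in place of $[0,T]$, for each $n$; this is legitimate because the scheme \eqref{eq:the_scheme} is a one-sided time-stepping method, so $\hat{U}$ restricted to $[0,t_n]$ is the semidiscrete solution on $[0,t_n]$, and because Theorem~\ref{thm:quasi_opt_mod} (built on the inf-sup bound \eqref{eq:infsup_sd} and on Lemma~\ref{lemma:alternative_boundedness}) localizes to $[0,t_n]$ just as Theorem~\ref{thm:quasi_opt_unsharp} does. The resulting estimate bounds $\norm{\dot{H}^{\beta}}{u_2(t_n) - \hat{U}_2^{(n)}}$ by the partial sum $\sum_{i=0}^{n-1}(\cdots)$, which is dominated by the full sum $\sum_{i=0}^{N-1}(\cdots)$; taking the maximum over $n$ and combining with the case $n = N$ for the $L^2((0,T);\dot{H}^{\beta+1})$ term gives the claimed inequality.

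The step I expect to be the main obstacle is this last localization: one must verify carefully that the semidiscrete solution is causal (so that its restriction to $[0,t_n]$ again solves the semidiscrete problem) and that both the inf-sup Theorem~\ref{thm:bnbthm_sd} and the boundedness Lemma~\ref{lemma:alternative_boundedness} hold verbatim on each $[0,t_n]$. The deliberate use of the suboptimal order $s = q$ for the $\dot{H}^{\beta+3}$ term is the only other point that is not entirely routine, but it is exactly the estimate that absorbs the $k_i^2$ weight built into Lemma~\ref{lemma:alternative_boundedness} and restores the uniform rate $k_i^{q+1}$.
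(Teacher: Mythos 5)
Your proposal is correct and follows precisely the route the paper intends: the paper states this theorem without an explicit proof, as the immediate consequence of Theorem~\ref{thm:quasi_opt_mod} (in the $\beta$-shifted form of the remark following it) with $Y_1 = \Pi^{(q)}u_1$, combined with the two mixed-order projection estimates (order $q+1$ in $\dot{H}^{\beta+1}$ for the unweighted term, order $q$ in $\dot{H}^{\beta+3}$ for the $k_i^2$-weighted term) so that both contributions carry the common factor $k_i^{2(q+1)}$. Your localization to $[0,t_n]$ to recover the maximum over the temporal nodes is also the paper's own device, used verbatim in the proofs of Theorems~\ref{thm:quasi_opt_unsharp} and \ref{thm:superconvergence} and justified by the causal decomposition of the scheme in \S~\ref{subs:decomposition}.
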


\subsection{Pointwise superconvergence of order $2(q+1)$}

We can now give a rigorous proof of Theorem~\ref{thm:second_order_convergence} that 
does not rely on the explicit form of the scheme obtained by discretizing with 
the first space-time formulation. The advantage of an explicit proof is that it 
holds for any arbitrary $q$, while Theorem~\ref{thm:second_order_convergence} relies on 
the fact that the particular time stepping obtained for the first space-time 
formulation of \eqref{eq:HeatStrong} is the Crank--Nicolson method.


\begin{theorem}\label{thm:superconvergence}
  For sufficiently smooth data, the numerical solution obtained by
  splitting \eqref{eq:semidiscrete_form_q} is superconvergent at the
  grid points, that is,
  \begin{equation}\label{eq:localises_superconverg}
  \begin{aligned}
&\max_{n=1,\ldots,N}\norm{H}{u_2(t_n) - U_2^{(n)}} \\
&\qquad  \leq C  k^{q+1} 
\Big[ \sum_{i=0}^{N-1} k_i^{2(q+1)} \Big(
\norm{L^2(I_i;\dot{H}^{2q+5})}{u_1^{(q)}}^2 
+ \norm{L^2(I_i;\dot{H}^{2q+3})}{u_1^{(q+1)}}^2  \Big)
\Big]^{\frac12},
\end{aligned}
  \end{equation}
or, in terms of the data,
  \begin{equation}\label{eq:globalised_superconverg}
  \begin{aligned}
  &\max_{n=1,\ldots,N}\norm{H}{u_2(t_n) - U_2^{(n)}} \\
  &\qquad \leq C  
k^{2(q+1)} \Big( 
\norm{L^2((0,T);\dot{H}^{2q+3})}{f^{(q)}} + 
\norm{\dot{H}^{2q+4}}{u_{q,0}} \Big),
\end{aligned}
  \end{equation}
where $u_{q,0}$ is defined as:
\begin{equation*}
u_{q,0} := \sum_{k=0}^{q-1}{(-A)^k f^{(q-1-k)}(0)} + (-A)^{q}u_0. 
\end{equation*}
\end{theorem}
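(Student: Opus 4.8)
The plan is to prove the superconvergence estimate \eqref{eq:localises_superconverg} by a duality argument, in the spirit of \cite[Theorem~12.3]{Thomee}, combined with the semidiscrete error estimate of Theorem~\ref{thm:error_quasi_optimal_expl_sd}. First I would fix a grid point $t_n$ and set up the dual problem. The quantity we wish to bound is $\norm{H}{u_2(t_n)-U_2^{(n)}}$, which I would write by duality as
\begin{equation*}
\norm{H}{u_2(t_n)-U_2^{(n)}} = \sup_{0\neq z\in H}\frac{\inner{u_2(t_n)-U_2^{(n)}}{z}}{\norm{H}{z}}.
\end{equation*}
For fixed $z\in H$ I would introduce the backward-in-time dual problem on $(0,t_n)$ whose data is $z$ placed at the final time $t_n$; in the present weak space-time language this means solving the adjoint variational problem so that the solution $\varphi$ satisfies $\sB^*(Y,\varphi)=\inner{Y_2}{z}$-type relations, with $\varphi$ playing the role of the test function that picks out the pointwise value at $t_n$. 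The key is that the component of $\varphi$ against which $U_2^{(n)}-u_2(t_n)$ is tested sits in the trial/test pairing in exactly the way that the final-time term $\inner{y_2}{x(t)}$ in \eqref{eq:bilinear_form} does.

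Next I would use Galerkin orthogonality $\sB^*(u-\hat{U},X)=0$ for all $X\in\cX_{k,q+1}$ to insert the discrete approximation of the dual solution. Writing $\varphi$ for the continuous dual solution and $\tilde\varphi\in\cX_{k,q+1}$ for its semidiscrete approximant (or interpolant/projection into the test space), the error representation would read schematically
\begin{equation*}
\inner{u_2(t_n)-U_2^{(n)}}{z} = \sB^*(u-\hat{U},\varphi-\tilde\varphi),
\end{equation*}
so that the pointwise error is expressed as the bilinear form applied to the product of two errors: the primal error $u-\hat{U}$ and the dual approximation error $\varphi-\tilde\varphi$. At this stage I would apply the alternative boundedness estimate of Lemma~\ref{lemma:alternative_boundedness} (or its $\beta$-shifted variant in the remark) to control $\sB^*(\,\cdot\,,\varphi-\tilde\varphi)$, and then bound each factor by Theorem~\ref{thm:error_quasi_optimal_expl_sd} applied to the primal problem and to the dual problem respectively. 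Each factor contributes a power $k^{q+1}$, and the product yields the $k^{2(q+1)}$ superconvergence rate; the explicit $\dot{H}$-regularity indices in \eqref{eq:localises_superconverg} would come from adding the spatial-regularity shift needed in the dual estimate (the dual solution, being driven by $z\in H$, gains two spatial derivatives relative to the base level, which accounts for the indices $2q+5$ and $2q+3$).

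The main obstacle I expect is making the duality pairing rigorous within this weak space-time framework, where $u_1$ need not be continuous and the point value $u_2(t_n)$ is delivered by the second component rather than by pointwise evaluation of $u_1$. Specifically, one must verify that the dual solution $\varphi$ has the regularity required by Lemma~\ref{lemma:alternative_boundedness}—namely $\varphi\in L^2((0,t_n);\dot{H}^{3})$ with vanishing second component after the relevant subtraction—and that its discrete approximation error obeys the semidiscrete estimate with the appropriately shifted exponents. I would establish this by invoking Theorem~\ref{maintheorem2_beta} for the regularity of the dual problem and Theorem~\ref{thm:error_quasi_optimal_expl_sd} for its discrete error, being careful that the duality is set up so that the hypothesis $y_2=0$ of Lemma~\ref{lemma:alternative_boundedness} is met (this is precisely where choosing the test/trial components to cancel the final-time term is essential). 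Finally, to obtain the data-driven bound \eqref{eq:globalised_superconverg} from \eqref{eq:localises_superconverg}, I would express the temporal derivatives $u_1^{(q)}$ and $u_1^{(q+1)}$ in terms of $f$ and $u_0$ by repeatedly differentiating the equation $\dot{u}_1=f-Au_1$ in time, which produces exactly the quantity $u_{q,0}=\sum_{k=0}^{q-1}(-A)^k f^{(q-1-k)}(0)+(-A)^q u_0$ as the relevant initial trace, and then apply Theorem~\ref{maintheorem2_beta} once more with the correct spatial index to bound these norms by the stated norms of the data.
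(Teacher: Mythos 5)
Your overall architecture matches the paper's: duality with the homogeneous backward problem $-\dot z+Az=0$, $z(t_n)=\phi$, the error representation $\inner{e_2}{\phi}=\sB^*(e,z)=\sB^*(e,z-X)$ obtained from Galerkin orthogonality together with the primal formulation of the dual problem, and the final passage to the data bound by differentiating the equation $q$ times to produce $u_{q,0}$ and invoking Theorem~\ref{maintheorem2_beta}. The gap sits in the one step that carries the whole theorem: how to extract a factor $k^{q+1}$ from the dual side when the dual datum is only $\phi\in H$.

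Both tools you propose for that step fail. First, Lemma~\ref{lemma:alternative_boundedness} bounds $\sB^*(y,X)$ only when the first argument has vanishing second component and the second argument lies in the discrete space $\cX_{k,q+1}$; in the representation $\sB^*(e,z-X)$ the first argument is the error $e$, whose second component $e_2$ is exactly the quantity being estimated (so it cannot be arranged to vanish), and $z-X$ is not discrete. Second, and more seriously, you propose to bound the dual factor by applying Theorem~\ref{thm:error_quasi_optimal_expl_sd} ``to the dual problem''. That theorem needs smooth data, whereas the dual solution $z(s)=e^{-(t_n-s)A}\phi$ with $\phi\in H$ satisfies only $\norm{\dot{H}^{m}}{z(s)}\lesssim (t_n-s)^{-m/2}\norm{H}{\phi}$, so the norms on the right-hand side of that theorem (e.g.\ $\norm{L^2(I_i;\dot{H}^{2q+5})}{z^{(q)}}$) are in general infinite, let alone bounded by $\norm{H}{\phi}$; there is no ``gain of two spatial derivatives'' available. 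The paper's mechanism, which your proposal is missing, is the following trade: take $X$ to be the interpolant of $z$ of degree $q+1$, bound $\sB^*(e,z-X)$ by Cauchy--Schwarz in $\beta$-shifted norms (the boundedness statement of Theorem~\ref{maintheorem2_beta}, not Lemma~\ref{lemma:alternative_boundedness}), and then use the homogeneous dual equation to convert time derivatives into powers of $A$, namely $z^{(q+1)}=A^{q+1}z$ and $z^{(q+2)}=A^{q+1}\dot z$. Choosing the shift $\beta=2(q+1)$ makes the dual factor equal to $Ck^{q+1}\big(\norm{L^2((0,t_n);\dot{H}^{1})}{z}+\norm{L^2((0,t_n);\dot{H}^{-1})}{\dot z}\big)\le Ck^{q+1}\norm{H}{\phi}$, so that only the plain energy bound for $z$ is needed and no parabolic smoothing enters; the entire regularity burden $\beta=2(q+1)$ is pushed onto the primal error, which Theorem~\ref{thm:error_quasi_optimal_expl_sd} at level $\beta=2q+2$ then bounds with the second factor $k^{q+1}$ and the indices $2q+5$ and $2q+3$ appearing in \eqref{eq:localises_superconverg}. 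Without this conversion of temporal into spatial regularity and the accompanying negative shift on the dual side, your two factors of $k^{q+1}$ cannot both be justified, and the argument does not close.
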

\begin{proof}
  We consider the problem on $(0,t_n)$ with arbitrary $t_n$ and omit
  $t_n$ in the notation for the spaces and bilinear form.  The
  following orthogonality relation is satisfied, for $e = u- \hat{U}$:
\begin{equation}\label{eq:orthogonality_error}
\sB^*(e,X) = 0, \quad \forall X \in \cX_{k,q+1}.
\end{equation}
We now consider the adjoint problem given by
\begin{equation*}
\begin{aligned}
-\dot{z}(s) + Az(s) = 0, \quad &\mbox{in }V^*, \, s \in (0,t_n),\\
z(t_n) = \phi,\quad  &\mbox{in }H,
\end{aligned}
\end{equation*}
where $\phi$ is an arbitrary element of $H$.
The first space-time formulation of this problem is given in the continuous 
case by
\begin{align}\label{eq:cont_primal}
&z \in \cX \colon \sB^*(y,z) = \inner{y_2}{\phi}, \quad \forall y = 
(y_1,y_2) \in 
\cY_H.  
\end{align}
In particular, if we choose $ y = (0,e_2)$ in \eqref{eq:cont_primal} and use 
the orthogonality relation \eqref{eq:orthogonality_error}, we have that for 
any $X \in 
\cX_{k,q+1}$:
\begin{align*}
\inner{e_2}{\phi} = \sB^*(e,z) = \sB^*(e,z-X).
\end{align*}
If we assume that we have sufficient smoothness for the next quantities to make 
sense, we have:
\begin{align*}
\abs{ \inner{e_2}{\phi}   } \leq \norm{L^2((0,t_n);\dot{H}^{\beta+1}) \times 
\dot{H}^{\beta}}{e} \norm{L^2((0,t_n);\dot{H}^{1-\beta})\cap 
H^1((0,t_n);\dot{H}^{-1-\beta})}{z-X}.
\end{align*}
For the second term we choose $X \in\cX_{k,q+1}$ to be a standard interpolant of $z$:
\begin{align*}
&\norm{L^2((0,t_n);\dot{H}^{1-\beta})\cap 
H^1((0,t_n);\dot{H}^{-1-\beta})}{z-X} \\
&\qquad \leq C k^{q+1}\Big( \norm{L^2((0,t_n);\dot{H}^{1-\beta})}{z^{(q+1)}}  + 
\norm{L^2((0,t_n);\dot{H}^{-1-\beta})}{z^{(q+2)}}\Big)   \\
&\qquad=  Ck^{q+1}\Big( \norm{L^2((0,t_n);\dot{H}^{1-\beta})}{A^{q+1} z}  + 
\norm{L^2((0,t_n);\dot{H}^{-1-\beta})}{A^{q+1} \dot{z}} \Big)  \\
&\qquad = Ck^{q+1}\Big( \norm{L^2((0,t_n);\dot{H}^{1-\beta+(2q+2)})}{ z}  + 
\norm{L^2((0,t_n);\dot{H}^{-1-\beta+(2q+2)})}{ \dot{z}} \Big)\\ 
&\qquad = Ck^{q+1}\Big( \norm{L^2((0,t_n);\dot{H}^{1})}{ z}  + 
\norm{L^2((0,t_n);\dot{H}^{-1})}{ \dot{z}} \Big)
\le Ck^{q+1} \norm{H}{\phi},  
\end{align*}
where we chose $\beta = 2(q+1)$ and used a standard bound for $z$.
Hence,
\begin{align*}
\norm{H}{e_2} \leq C k^{q+1} \norm{L^2((0,t_n);\dot{H}^{2(q+1)+1}) \times 
\dot{H}^{2(q+1)}}{e},
\end{align*}
and \eqref{eq:localises_superconverg} follows 
by Theorem~\ref{thm:error_quasi_optimal_expl_sd} and recalling that $n$ is 
arbitrary.

In order to show \eqref{eq:globalised_superconverg}, we notice that 
\eqref{eq:localises_superconverg} implies the non-localized bound
\begin{align*}
\max_{n=1,\ldots,N}\norm{H}{e_2^{(n)}} \leq C k^{2(q+1)} \Big( 
\norm{L^2((0,T);\dot{H}^{2q+5})}{u_1^{(q)}} + 
\norm{L^2((0,T);\dot{H}^{2q+3})}{u_1^{(q+1)}} \Big).
\end{align*}
The final step is achieved by bounding the norm of the solution in terms of the 
norm of its data. By using the notation $ u_q := u^{(q)}$, and noticing that 
$u_q$ is the solution to the primal formulation of 
\begin{equation*}
\dot{u}_q + Au_q = f^{(q)},\ t\in (0,T); \quad 
u_q(0) = u_{q,0},
\end{equation*}
we can see that the boundedness of $
\norm{L^2((0,T);\dot{H}^{2q+5})}{u^{(q)}} + 
\norm{L^2((0,T);\dot{H}^{2q+3})}{u^{(q+1)}} $, is equivalent to  
$u_q \in L^2( (0,T);\dot{H}^{2q+5}) \cap  H^1( (0,T);\dot{H}^{2q+3})$.
According to Theorem~\ref{maintheorem2_beta} a sufficient condition for this is 
given by $ f^{(q)} \in L^2((0,T);\dot{H}^{2q+3})$ and $u_{q,0} \in 
\dot{H}^{2q+4}$, which gives
\begin{align*}
\norm{L^2((0,T);\dot{H}^{2q+5})}{u_q}^2 + 
\norm{L^2((0,T);\dot{H}^{2q+3})}{\dot{u}_q}^2 \leq 
\norm{L^2((0,T);\dot{H}^{2q+3})}{f^{(q)}}^2 + 
\norm{\dot{H}^{2q+4}}{u_{q,0}}^2 .
\end{align*}
We thus achieve the final estimate 
\begin{align*}
\max_{n=1,\ldots,N}\norm{H}{e_2^{(n)}} \leq C k^{2(q+1)} \Big( 
\norm{L^2((0,T);\dot{H}^{2q+3})}{f^{(q)}} + 
\norm{\dot{H}^{2q+4}}{u_{q,0}} \Big),
\end{align*}
which completes the proof.
\end{proof}

\begin{remark}
Theorem~\ref{thm:superconvergence} shows a gain of an extra factor 
$k^{q+1}$, which comes from the duality argument and interpolation of degree 
$q+1$ in the $H^1(I_i;\dot{H}^s)$-norm (Aubin--Nitsche trick). A similar 
argument in \cite[Theorem~12.3]{Thomee} for the ${\rm dG}(q)$-method yields only 
a factor $k^{q}$ because the test functions are of degree $q$.
\end{remark}


\section{Numerical experiments}\label{sec:numerics}
Since our main concern is about the temporal evolution of the problem, we 
restrict the numerical tests to the case of one and two spatial dimensions, 
discretized by means of Lagrangian elements of sufficiently high degree so 
that the dominating term in the error is given by the temporal part. We test 
for two different problems the validity of our \emph{a priori} estimates. In both 
cases we impose the validity of condition \eqref{eq:CFL} by taking $k = h^2$.
\subsection{One-dimensional test}
We test our scheme for the following problem on the space-time domain 
$(0,1)\times(0,1]$:
\begin{equation}\label{eq:numerical_problem}
\begin{aligned}
&\dot{u}(\xi,t) - u''(\xi,t) = 2\,\pi\,\sin(2\,\pi\,\xi)\Big( 
\cos(2\,\pi\,t) + 2\,\pi\,\sin(2\,\pi\,t) \Big),\\
&u(0,t) = u(1,t) = 0,\quad  t\in[0,1], \\
&u(\xi,0) = 0,\quad  \xi \in[0,1], \\
\end{aligned}
\end{equation}
which has the solution $u(\xi,t) = \sin(2\,\pi\,\xi)\sin(\pi\,t)$. 

In Figure~\ref{fig:error_decay} we report a log-log graph showing the decay of 
the 
error normalized by the norm of the right-hand side, for the numerical solution 
of Problem \eqref{eq:numerical_problem}. In Figure~\ref{fig:superconvergence} we 
show that the second component of 
the error satisfies the superconvergence bound stated in 
Theorem~\ref{thm:superconvergence}.
\begin{figure}
        \centering
        \begin{subfigure}[b]{0.48\textwidth}
	  \includegraphics[width=\textwidth]{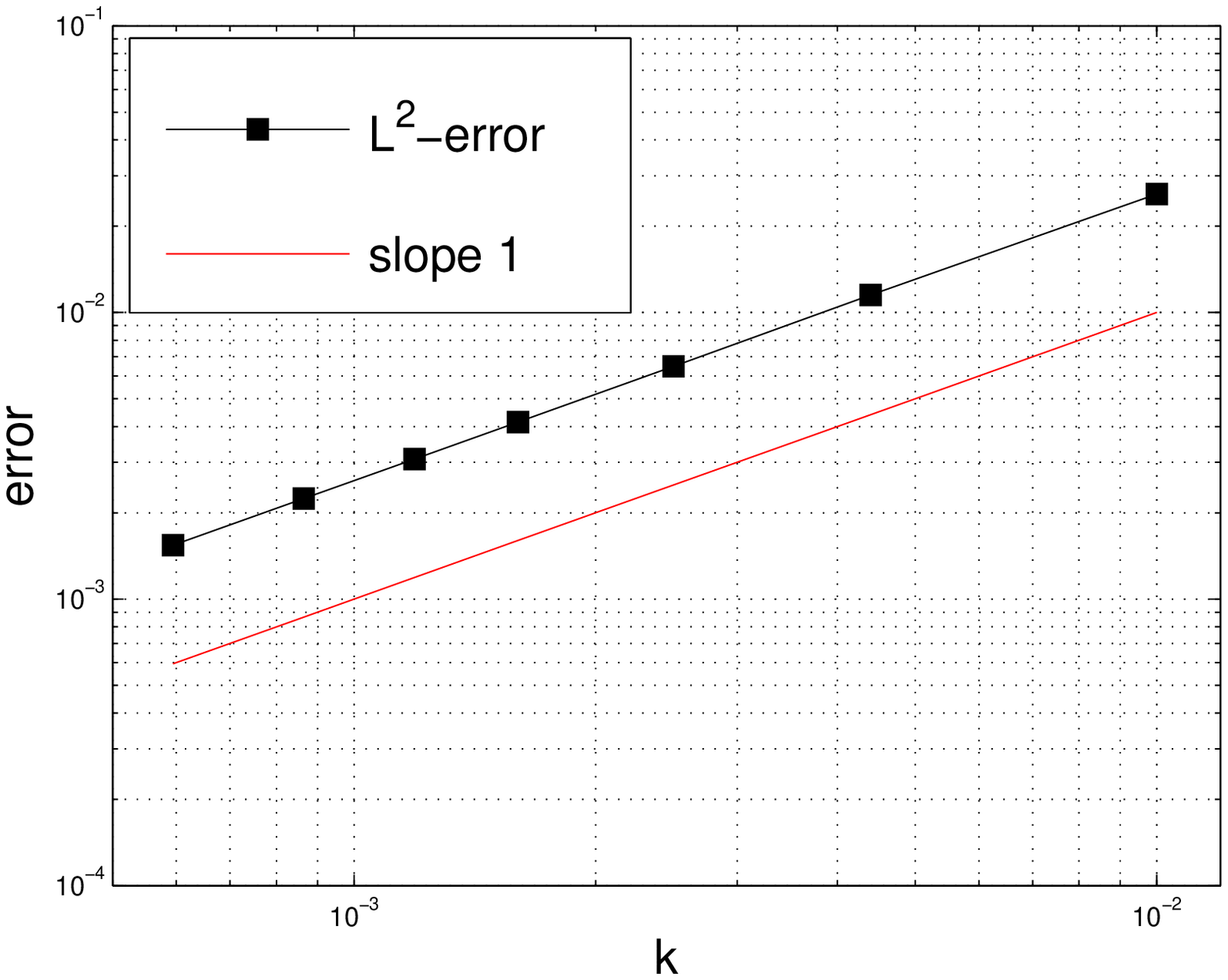}
	  \caption{Decay of the error.}
	  \label{fig:error_decay}
        \end{subfigure}
        \quad
        \begin{subfigure}[b]{0.48\textwidth}
                
\includegraphics[width=\textwidth]{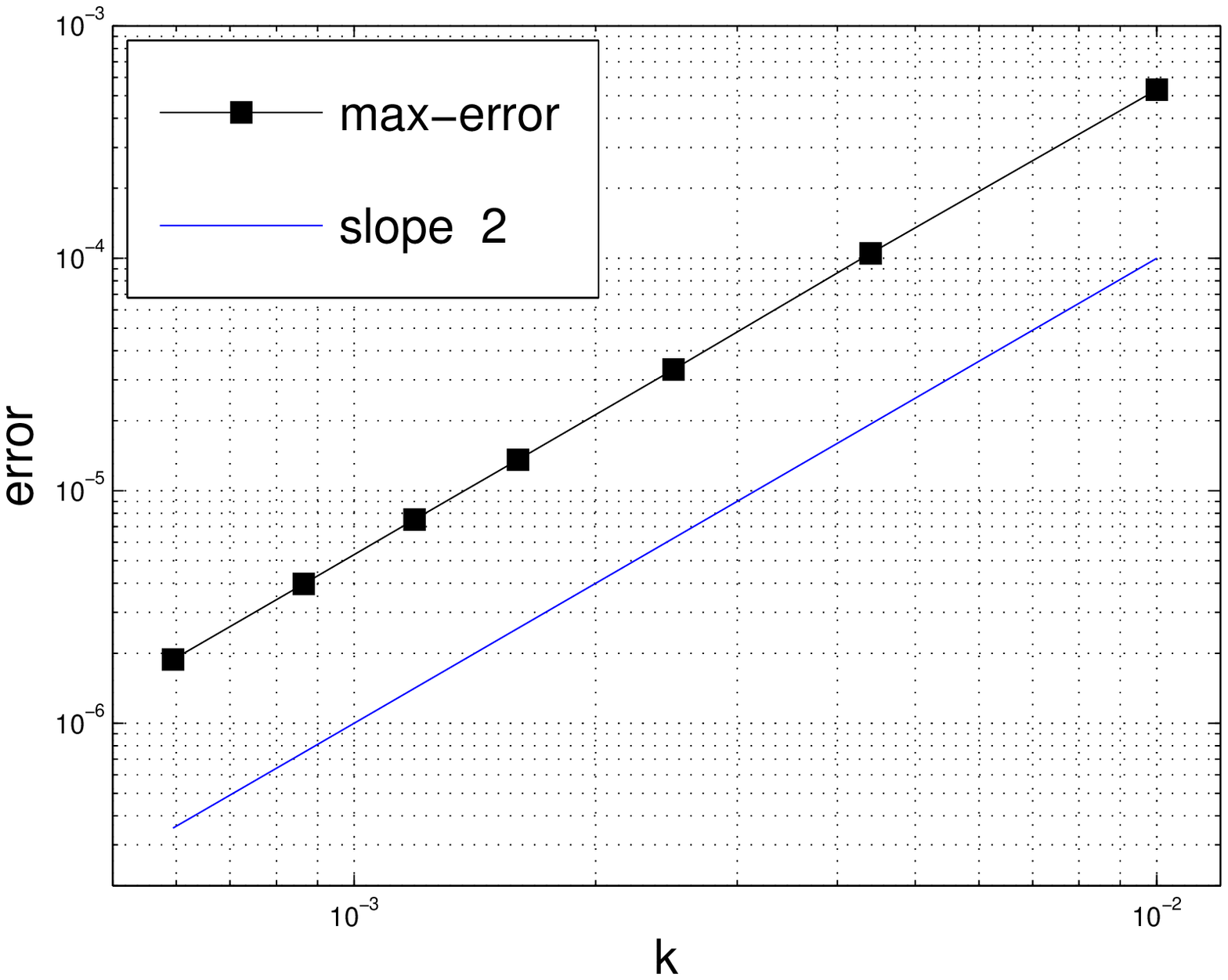}
                \caption{Superconvergence.}
                \label{fig:superconvergence}
        \end{subfigure}
        \caption{Numerical tests for Problem \eqref{eq:numerical_problem}.}
\end{figure}

\subsection{Two-dimensional test}
We test our scheme for the following problem on the space-time domain 
$(0,1)^2\times(0,1]$:
\begin{equation}\label{eq:numerical_problem_2d}
\begin{aligned}
&\dot{u}(\xi,\eta,t) - \Delta u(\xi,\eta,t) = 
\pi\,\sin(\pi\,\xi)\sin(\pi\eta)\Big( 
\cos(\pi\,t) + 2 \pi\,\sin(\pi\,t) \Big),\\
&u(0,\eta,t) = u(1,\eta,t) = 0,\quad  t\in[0,1],\,\eta \in 
(0,1), \\
&u(\xi,0,t) = u(\xi,1,t) = 0,\quad  t\in[0,1],\,\xi \in 
(0,1), \\
&u(\xi,\eta,0) = 0,\quad  (\xi,\eta) \in [0,1]^2,
\end{aligned}
\end{equation}
which has the solution $u(\xi,\eta,t) = 
\sin(\pi\,\xi)\sin(\pi\,\eta)\sin(\pi\,t)$.

In Figures~\ref{fig:error_decay_2d} and \ref{fig:superconvergence_2d} we report 
the analogous results to the ones presented in the one-dimensional case.

\begin{figure}
        \centering
        \begin{subfigure}[b]{0.48\textwidth}
	  \includegraphics[width=\textwidth]{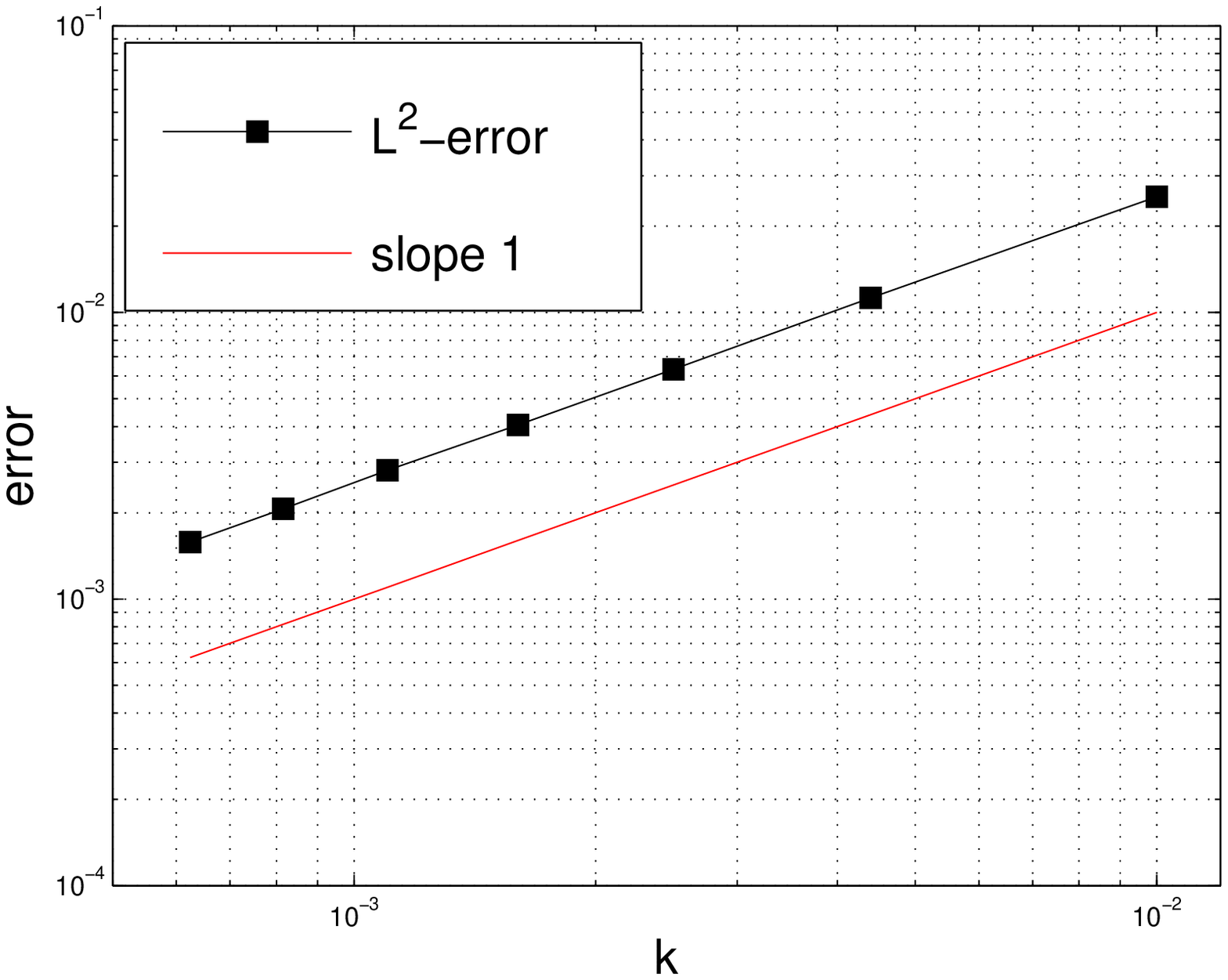}
	  \caption{Decay of the error.}
	  \label{fig:error_decay_2d}
        \end{subfigure}
        \quad
        \begin{subfigure}[b]{0.48\textwidth}
                
\includegraphics[width=\textwidth]{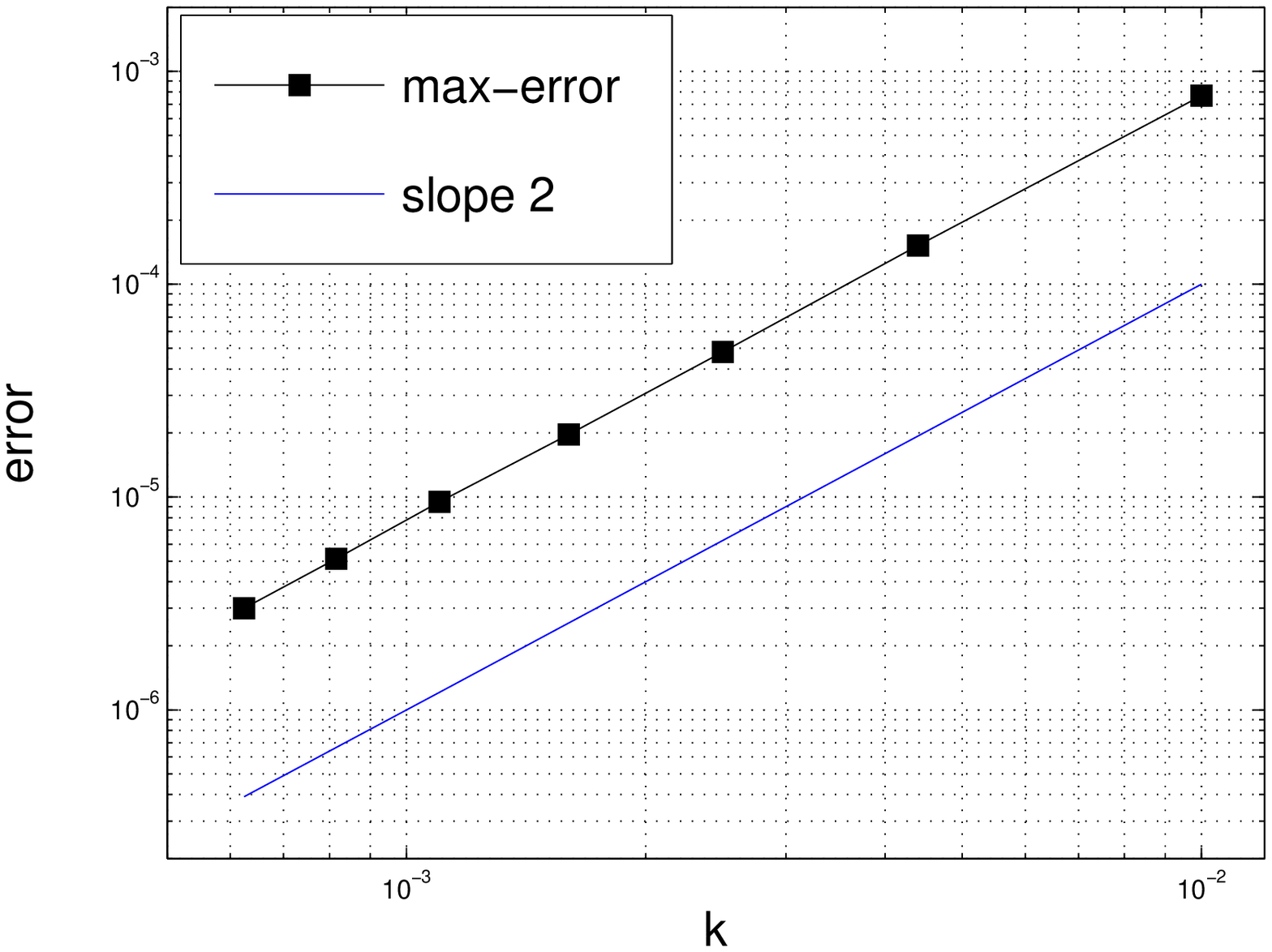}
                \caption{Superconvergence.}
                \label{fig:superconvergence_2d}
        \end{subfigure}
        \caption{Numerical tests for Problem \eqref{eq:numerical_problem_2d}.}
\end{figure}

\subsection{One-dimensional test, $q=1$}
In Figures~\ref{fig:error_decay_q1} and \ref{fig:superconvergence_q1} we can 
see 
the results of convergence and superconvergence when this scheme is used to 
solve Problem \eqref{eq:numerical_problem}. The convergence rate is optimal 
and consistent with our predictions.
\begin{figure}
        \centering
        \begin{subfigure}[b]{0.48\textwidth}
	  \includegraphics[width=\textwidth]{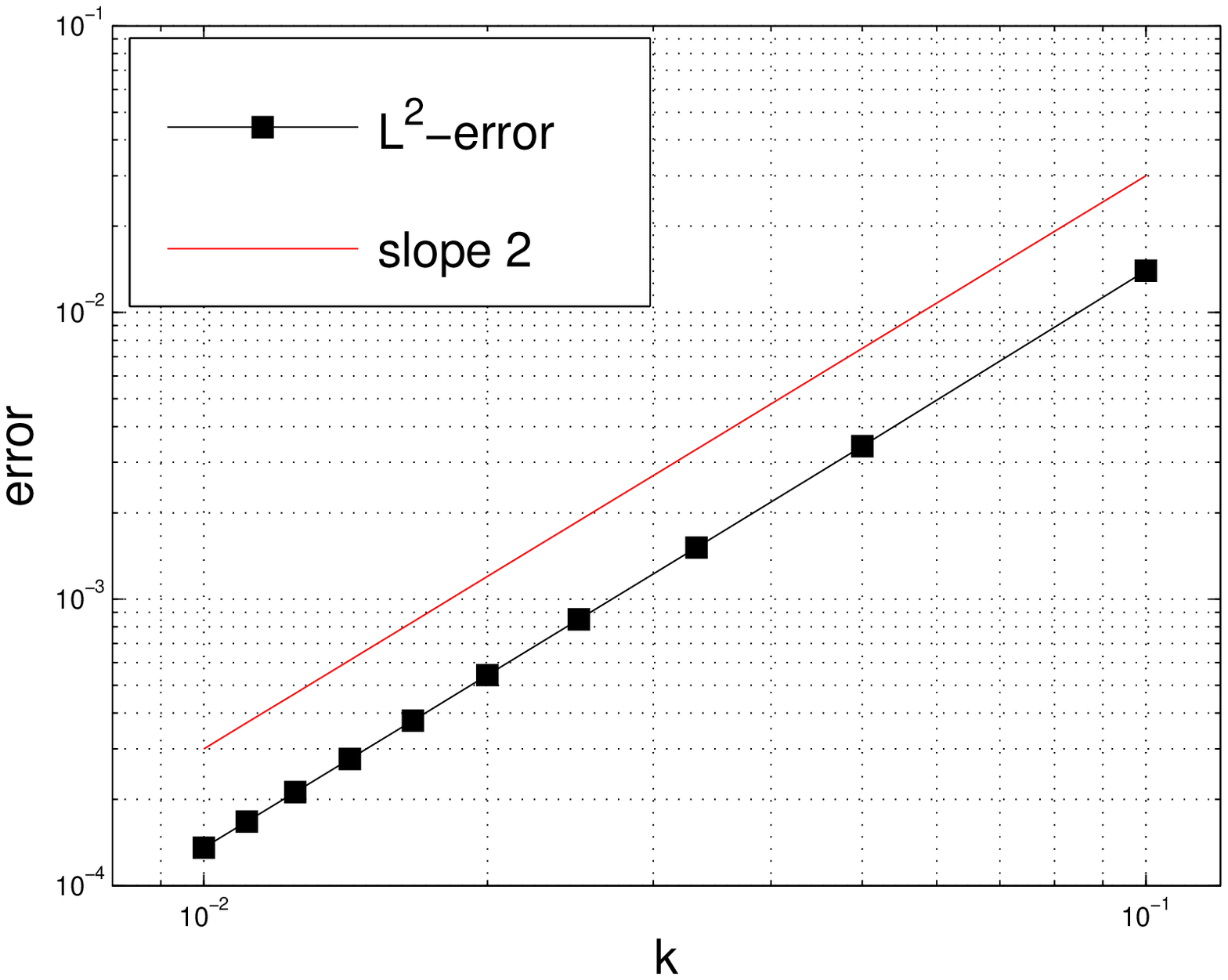}
	  \caption{Decay of the error.}
	  \label{fig:error_decay_q1}
        \end{subfigure}
        \quad
        \begin{subfigure}[b]{0.48\textwidth}
                
\includegraphics[width=\textwidth]{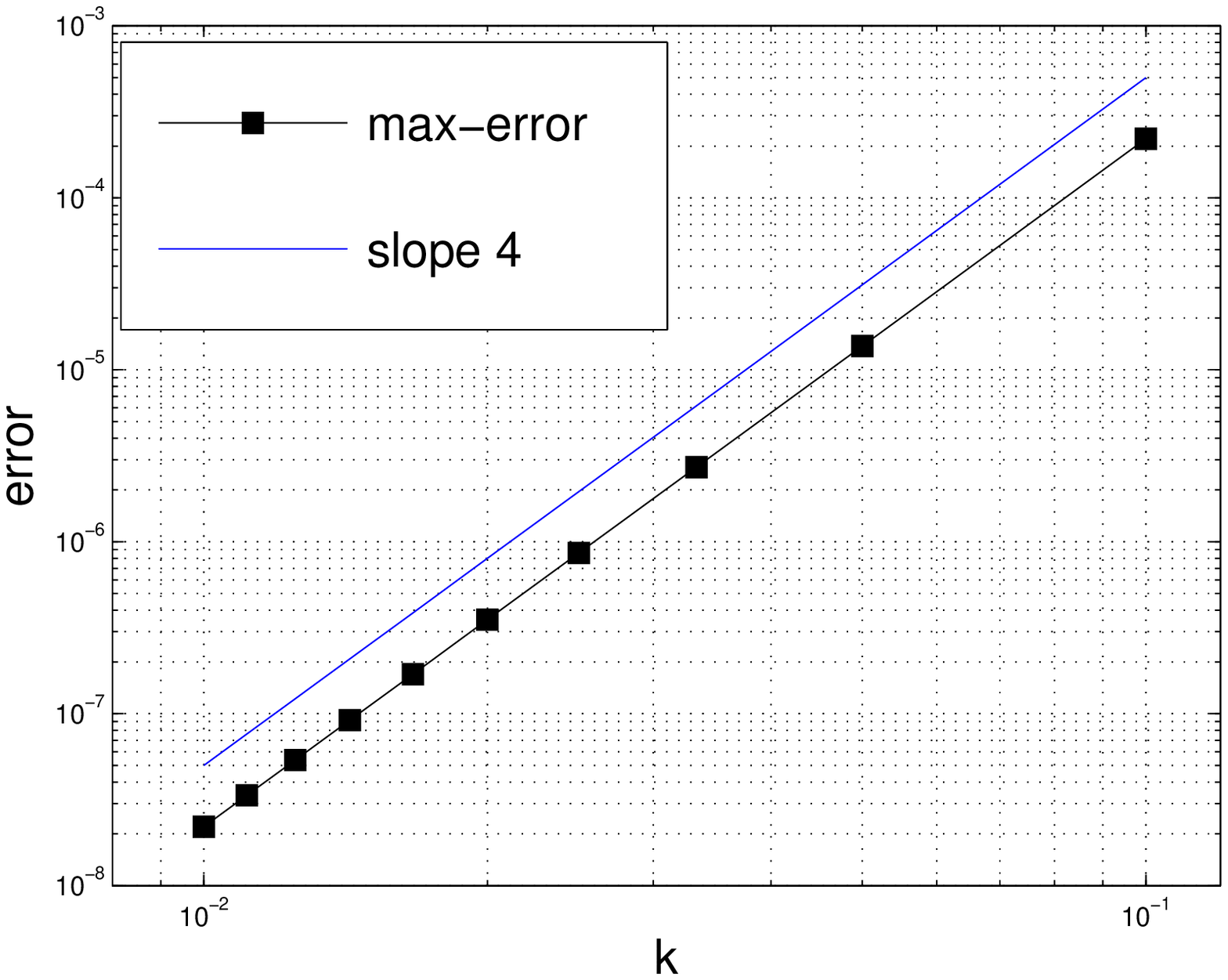}
                \caption{Superconvergence.}
                \label{fig:superconvergence_q1}
        \end{subfigure}
        \caption{Numerical tests for Problem \eqref{eq:numerical_problem}, 
$q=1$.}
\end{figure}

\subsection{One-dimensional test, low-regularity}

We investigate the behaviour of the error when the solution is not as smooth as 
we need to have superconvergence. We pick a problem such that $u$ has the 
first time-derivative which is square integrable, but not the second one.
More in detail, we choose $u$ equal to
$\abs{t-0.5}^{\frac{3 - \varepsilon }{2}} \sin(\pi \xi) $, where $\varepsilon$ 
is taken equal to $0.1$ in the case here investigated. 

In Figures~\ref{fig:error_decay_ns} and \ref{fig:superconvergence_ns} we can 
see 
the results of convergence and superconvergence when this scheme is used to 
solve our problem. The convergence rate for the first component of the error is 
optimal 
and consistent with our predictions. In this case the second component of the 
error does not superconverge and its rate of convergence behaves as the rate 
of convergence of the first component.

\begin{figure}
        \centering
        \begin{subfigure}[b]{0.48\textwidth}
	  \includegraphics[width=\textwidth]{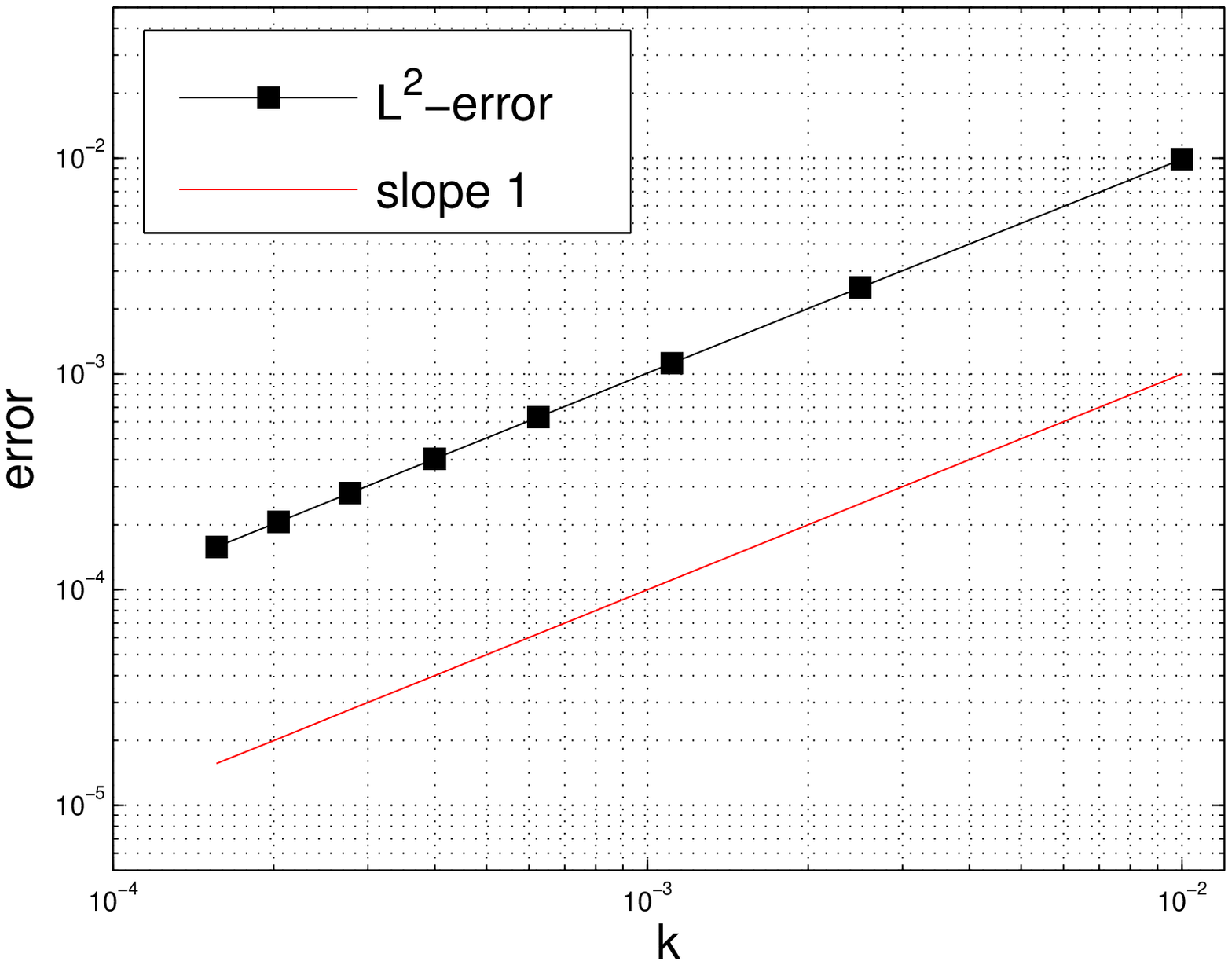}
	  \caption{Decay of the error.}
	  \label{fig:error_decay_ns}
        \end{subfigure}
        \quad
        \begin{subfigure}[b]{0.48\textwidth}
                
\includegraphics[width=\textwidth]{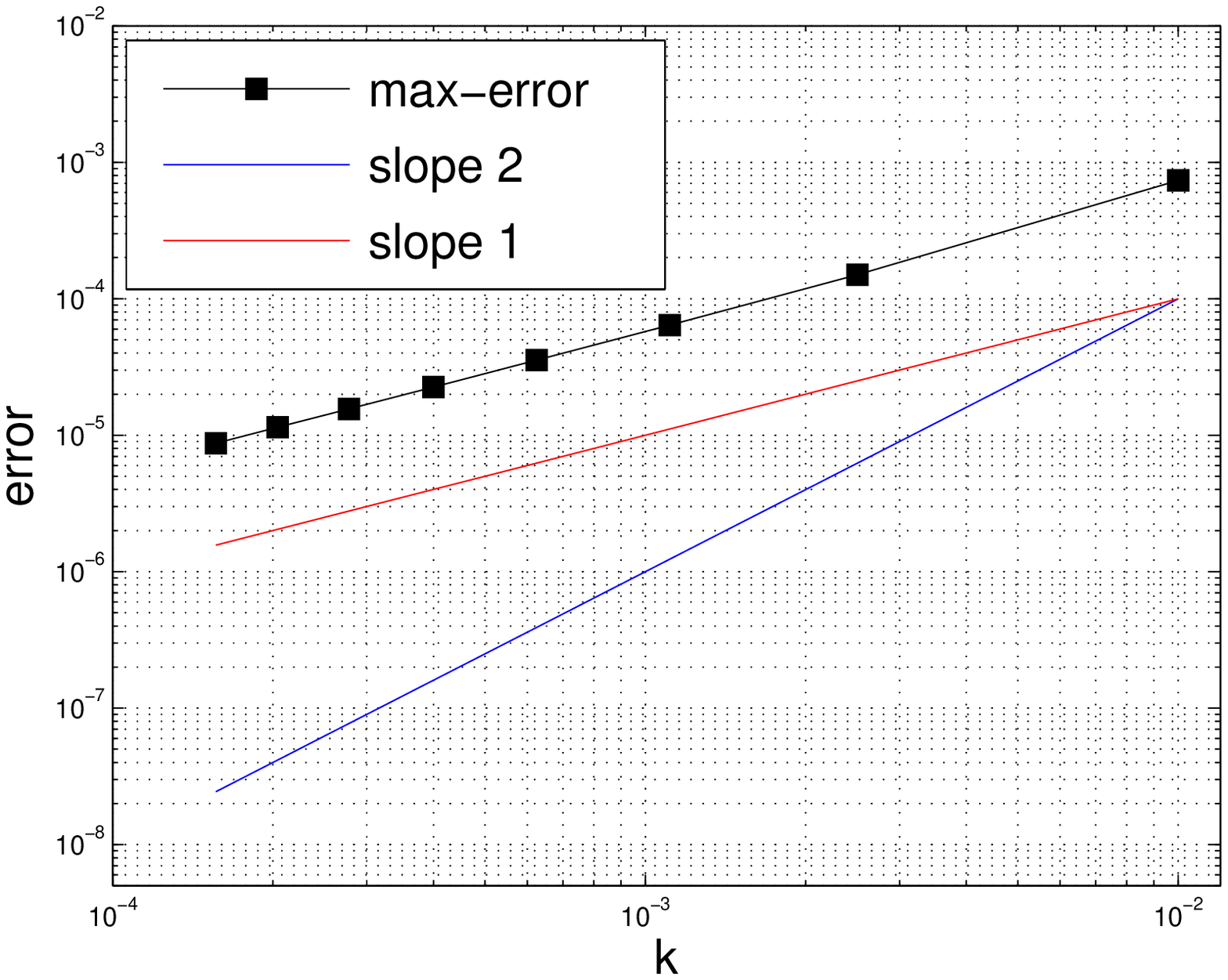}
                \caption{Non-superconvergence.}
                \label{fig:superconvergence_ns}
        \end{subfigure}
        \caption{Numerical tests for a problem with low-regular right-hand 
side.}
\end{figure}


\section{Final remarks}
In this article we have constructed a numerical scheme that produces a 
numerical solution under minimal regularity assumptions. The error of the 
solution has 
first been bounded in terms of the best possible approximation using the 
quasi-optimality theory, which does not require any further assumptions of 
regularity on the solution. The quasi-optimality constant that 
we obtain depends on the chosen discretization and requires the fulfilment of a 
certain CFL condition in order to have stability, consistently with the results 
in \cite{AndreevThesis} and \cite{Fra}.
We have shown that our scheme 
is of first order in time if we assume extra 
regularity, which means that the scheme is optimal with 
respect to the norm used to measure the error. Moreover, we have 
superconvergence at the points constituting the temporal grid, 
which means that the scheme is of second order in space and time. This 
further confirms the optimality of our method and its consistency with the 
known properties of the Crank--Nicolson scheme.
Since we do not need extra regularity to prove existence and 
uniqueness of a discrete solution, our scheme is in particular usable in 
contexts in which a smooth solution does not exist in the first place, and
this can, for example, constitute a novel approach for numerics to stochastic 
PDEs.

\bibliographystyle{alpha}
\bibliography{biblionew2}

\newcommand{\etalchar}[1]{$^{#1}$}
\def\cprime{$'$}
\begin{thebibliography}{CDD{\etalchar{+}}14}

\bibitem[And12]{AndreevThesis}
R.~Andreev.
\newblock Stability of {S}pace-{Time} {P}etrov-{G}alerkin {Discretizations} for
  {Parabolic} {Evolution} {Equations}.
\newblock PhD thesis, ETH Z\"urich, Dissertation No.~20842, 2012.

\bibitem[And13]{Andreev1}
R.~Andreev.
\newblock Stability of sparse space-time finite element discretizations of
  linear parabolic evolution equations.
\newblock {\em IMA J. Numer. Anal.}, 33(1):242--260, 2013.

\bibitem[And16]{Andreev2}
R.~Andreev.
\newblock On long time integration of the heat equation.
\newblock {\em Calcolo}, 53(1):19--34, 2016.

\bibitem[BA72]{BabuskaAziz}
I.~Babu{\v{s}}ka and A.~K. Aziz.
\newblock Survey lectures on the mathematical foundations of the finite element
  method.
\newblock In {\em The {M}athematical {F}oundations of the {F}inite {E}lement
  {M}ethod with {A}pplications to {P}artial {D}ifferential {E}quations ({P}roc.
  {S}ympos., {U}niv. {M}aryland, {B}altimore, {M}d., 1972)}, pages 1--359.
  Academic Press, New York, 1972.

\bibitem[BJ89]{BabuskaJanik}
I.~Babu{\v{s}}ka and T.~Janik.
\newblock The {$h$}-{$p$} version of the finite element method for parabolic
  equations. {I}. {T}he {$p$}-version in time.
\newblock {\em Numer. Methods Partial Differential Equations}, 5(4):363--399,
  1989.

\bibitem[BJ90]{BabuskaJanik2}
I.~Babu{\v{s}}ka and T.~Janik.
\newblock The {$h$}-{$p$} version of the finite element method for parabolic
  equations. {II}. {T}he {$h$}-{$p$} version in time.
\newblock {\em Numer. Methods Partial Differential Equations}, 6(4):343--369,
  1990.

\bibitem[CDD{\etalchar{+}}14]{Cioica}
P.~A. Cioica, S.~Dahlke, N.~D{\"o}hring, U.~Friedrich, S.~Kinzel, F.~Lindner,
  T.~Raasch, K.~Ritter, and R.~L. Schilling.
\newblock Convergence analysis of spatially adaptive {R}othe methods.
\newblock {\em Found. Comput. Math.}, 14(5):863--912, 2014.

\bibitem[C{\relax St}11]{CheginiStev}
N.~Chegini and R.~{\relax St}evenson.
\newblock Adaptive wavelet schemes for parabolic problems: sparse matrices and
  numerical results.
\newblock {\em SIAM J. Numer. Anal.}, 49(1):182--212, 2011.

\bibitem[EG04]{Ern}
A.~Ern and J.L. Guermond.
\newblock {\em Theory and {P}ractice of {F}inite {E}lements}, volume 159 of
  {\em Applied Mathematical Sciences}.
\newblock Springer-Verlag, New York, 2004.

\bibitem[LM16]{LarssonMolteni}
S.~Larsson and M.~Molteni.
\newblock A weak space-time formulation for the linear stochastic heat
  equation.
\newblock {\em Int. J. Appl. Comput. Math.}, 2016.
\newblock electronic.

\bibitem[Mol13]{Mollet}
C.~Mollet.
\newblock Stability of {P}etrov-{G}alerkin discretizations: {A}pplication to
  the space-time weak formulation for parabolic evolution problems.
\newblock {\em Comput. Methods. Appl. Math.}, 14(2):231--255, 2013.

\bibitem[SS13]{SchwabSuli}
Ch. Schwab and E.~S{\"u}li.
\newblock Adaptive {G}alerkin approximation algorithms for {K}olmogorov
  equations in infinite dimensions.
\newblock {\em Stochastic Partial Differential Equations: Analysis and
  Computations}, 1(1):483--493, 2013.

\bibitem[S{\relax St}09]{SchwabSte}
Ch. Schwab and R.~{\relax St}evenson.
\newblock Space-time adaptive wavelet methods for parabolic evolution problems.
\newblock {\em Math. Comp.}, 78(267):1293--1318, 2009.

\bibitem[Tan13]{Fra}
F.~Tantardini.
\newblock Quasi-{O}ptimality in the {B}ackward {E}uler-{G}alerkin {M}ethod for
  {L}inear {P}arabolic {P}roblems.
\newblock Tesi di dottorato, Universit\`a degli Studi di Milano, 2013.

\bibitem[Tho06]{Thomee}
V.~Thom{\'e}e.
\newblock {\em Galerkin {F}inite {E}lement {M}ethods for {P}arabolic
  {P}roblems}, volume~25 of {\em Springer Series in Computational Mathematics}.
\newblock Springer-Verlag, Berlin, second edition, 2006.

\bibitem[UP14]{UrbanPatera2}
K.~Urban and A.~T. Patera.
\newblock An improved error bound for reduced basis approximation of linear
  parabolic problems.
\newblock {\em Math. Comp.}, 83(288):1599--1615, 2014.

\bibitem[XZ03]{XuZik}
J.~Xu and L.~Zikatanov.
\newblock Some observations on {B}abu\v ska and {B}rezzi theories.
\newblock {\em Numer. Math.}, 94(1):195--202, 2003.

\end{thebibliography}

\end{document}